\newtheorem{teor}{Theorem}[section]
\newtheorem{lema}[teor]{Lemma}
\newtheorem{prop}[teor]{Proposition}
\newtheorem{coro}[teor]{Corollary}
\theoremstyle{definition}
\newtheorem{defi}[teor]{Definition}
\newtheorem{hipos}[teor]{Hypotheses}
\newtheorem{nota}[teor]{Remark}
\numberwithin{equation}{section}
\newcommand{\N}{\mathbb{N}}
\newcommand{\R}{\mathbb{R}}
\newcommand{\mB}{\mathcal{B}}
\newcommand{\mC}{\mathcal{C}}
\newcommand{\mD}{\mathcal{D}}
\newcommand{\mK}{\mathcal{K}}
\newcommand{\mM}{\mathcal{M}}
\newcommand{\mO}{\mathcal{O}}
\newcommand{\mP}{\mathcal{P}}
\newcommand{\mU}{\mathcal{U}}
\newcommand{\mV}{\mathcal{V}}
\newcommand{\ep}{\varepsilon}
\newcommand{\pu }{{\cdot}}
\newcommand{\WW}{W^{1,\infty}}
\newcommand{\W}{\Omega}
\newcommand{\w}{\omega}
\newcommand{\wt}{\w\pu t}
\newcommand{\ws}{\w\pu s}
\newcommand{\wit}{\widetilde}
\newcommand{\n}[1]{\|#1\|}
\newcommand{\Frac}[2]{\displaystyle\frac{#1}{#2}}
\newcommand{\Sum}{\displaystyle\sum }
\newcommand{\lsm}{\left[\begin{smallmatrix}}
\newcommand{\rsm}{\end{smallmatrix}\right]}
\newcommand{\Lin}{\text{\rm Lin}}
\begin{document}
\title[Nonautonomous FDEs with state-dependent delay]
{Exponential stability for nonautonomous functional differential
equations with state-dependent delay}
\author[I.~Maroto]{Ismael Maroto}
\author[C.~N\'{u}\~{n}ez]{Carmen N\'{u}\~{n}ez}
\author[R.~Obaya]{Rafael Obaya}
\address{Departamento de Matem\'{a}tica Aplicada, Universidad de
Valladolid, Paseo del Cauce 59, 47011 Valladolid, Spain}
\email[Ismael Maroto]{ismmar@eii.uva.es}
\email[Carmen N\'{u}\~{n}ez]{carnun@wmatem.eis.uva.es}
\email[Rafael Obaya]{rafoba@wmatem.eis.uva.es}
\thanks{Partly supported by MEC (Spain)
under project MTM2015-66330-P and by European Commission under project
H2020-MSCA-ITN-2014.}
\subjclass[2010]{37B55, 34K20, 37B25, 34K14}
\date{}
\begin{abstract}
The properties of stability of a compact semiflow $(\mK,\Pi,\R^+)$ determined by a
family of nonautonomous FDEs with state-dependent delay taking values in $[0,r]$
are analyzed. The solutions of the variational equation through
the orbits of $\mK$ induce linear skew-product semiflows on the bundles $\mK\times
\WW([-r,0],\R^n)$ and $\mK\times C([-r,0],\R^n)$. The coincidence
of the upper-Lyapunov exponents for both
semiflows is checked, and it is a fundamental tool to prove that the
strictly negative character of this upper-Lyapunov exponent is equivalent to the
exponential stability of $\mK$ in $\W\times\WW([-r,0],\R^n)$
and also to the exponential stability of this minimal set when the
supremum norm is taken in $\WW([-r,0],\R^n)$.
In particular, the existence of a uniformly exponentially stable solution
of a uniformly almost periodic FDE ensures the existence of
exponentially stable almost periodic solutions.
\end{abstract}
\keywords{Nonautonomous FDEs, state-dependent delay, exponential stability,
upper Lyapunov exponent}
\maketitle
\section{Introduction}\label{sec1}
State-dependent delay differential equations (SDDEs for short)
have been extensively investigated during the last years,
due to the theoretical interest of the related problems and to the great number of
potential applications in many areas of interest, as
automatic control, mechanical engineering, neural networks,
population dynamics and ecology. Among the
extensive list of works devoted to this field, we can mention
Hartung~\cite{hart1,hart2,hart3,hart4}, Wu~\cite{wu}, Walther \cite{walt2,walt},
Hartung {\em et al.}~\cite{hkvv}, Chen {\em et al.} \cite{chhw},
Hu and Wu \cite{huwu}, Mallet-Paret and Nussbaum \cite{mpna}, Hu {\em et al.} \cite{huvz},
Barbarossa and Walther \cite{bawa},
and He and de la Llave \cite{hell,hell2}, and Krisztin and  and Rezounenko
\cite{krre},
as well as the many references therein.
\par
In this paper, we analyze the exponential stability properties of the solutions of a
nonautonomous SDDE.
The use of the skew-product formulation allows us to use techniques arising from the
topological dynamics.
\par
More precisely, let $(\W,\sigma,\R)$ be a continuous flow on a compact metric space.
We write $\wt:=\sigma(t,\w)$, and consider
the family of SDDEs with maximum delay $r>0$, given by
\begin{equation}\label{1.introF}
 \dot{y}(t)=F(\wt,y(t), y(t-\tau(\wt,y_{t})))\,,\qquad t>0
\end{equation}
for $\w\in\W$,
where $F\colon\W\times\R^{n}\times\R^{n}\to\R^{n}$ is continuous and admits
continuous partial derivatives with respect to the vectorial components. Let
$C:=C([-r,0],\R^n)$ be endowed with the supremum norm. The state-dependent
delay is given by a continuous function $\tau\colon\W\times C\to[0,r]$, which is supposed to be
continuously differentiable with respect to its second argument and to satisfy some
standard Lipschitz conditions. And, as usual, we
represent $y_t(s):=y(t+s)$ for $s\in[-r,0]$ whenever
$y$ is a continuous function on $[t-r,t]$.
\par
It is well known that such a family may arise from a single SDDE, namely
$\dot{y}(t)=f(t, y(t), y(t-\wit\tau(t,y_{t})))$.
Standard conditions on the temporal variation of the map
$(f,\wit\tau)\colon\R\times\R^{n}\times\R^{n}\times C\to\R^n\times[0,r]$
(which are satisfied in the uniformly almost-periodic case,
but also in much more general situations), ensure that its {\em hull\/}
$\W$ (i.e., the closure in the compact-open topology of the
set of time-translated functions $(f,\wit\tau)_s(t,x,y,v):=(f,\wit\tau)(t+s,x,y,v)$)
is a compact metric space supporting a continuous flow $\sigma$, which is also given by
time-translation: the elements of are $\W$ are functions
$\w=(\w_1,\w_2)\colon\R\times\R^{n}\times\R^{n}\times C\to\R^n\times[0,r],\;
(t,y_1,y_2,v)\mapsto(\w_1(t,y_1,y_2),\w_2(t,v))$, and the continuous
flow is given by the map $\sigma:\R\times\W\to\W\,,\;
(t,(\w_1,\w_2))\mapsto(\w_1,\w_2)\pu t$, where
$((\w_1,\w_2)\pu t)(s,y_1,y_2,v)=(\w_1(t+s,y_1,y_2),\w_2(t+s,v))$). These conditions
also ensure that $F(\w,y_1,y_2)=\w_1(0,y_1,y_2)$
and $\tau(\w,v)=\w_2(0,v)$ for $\w=(\w_1,\w_2)$
are continuous operators: see Hino {\em et al.}~\cite{himn}.
In this way be obtain a family of the type \eqref{1.introF}
which includes the initial equation: just take
$\w=(f,\wit\tau)$, and note that, in particular, it has a dense orbit in $\W$).
In addition, it turns out that any of the
equations of the family satisfies the hypotheses assumed on the initial one.
Once this formulation is established, the analysis of the dynamical behavior
of the whole family provide information on the solutions of the initial equation.
\par
Additional recurrence conditions can be assumed on $f$ and $\wit\tau$ in
order to ensure that the flow $(\W,\sigma,\R)$ is minimal.
This is the situation in the particular cases for which the pair
$(f,\wit\tau)$ is uniformly periodic, almost periodic or almost automorphic,
properties which in fact ensure the same for the flow on the corresponding hull.
However, our approach in this paper is more general:
we assume neither that~\eqref{1.introF} comes
from a single SDDE, nor the minimality of $(\W,\sigma,\R)$. This last condition will be
indeed required for some of the results, but it will be imposed in due time.
\par
The compact metric space $\W$ is the
base of the bundle which constitutes the phase space of a
skew-product semiflow, whose fiber component is determined by the
solutions of the family~\eqref{1.introF}.
In our setting, the fiber of the bundle will be the Banach space $\WW\subset C$
of the Lipschitz-continuous functions endowed with the standard norm.
The already mentioned conditions assumed on the vector field and on the delay are intended to
ensure the existence, uniqueness and some regularity properties of the solutions. It is
convenient to keep in mind the idea that they are more exigent than those ensuring
similar properties in the study of fixed or time-dependent delay equations.
Strongly based on previous results
of \cite{hart1}, we have established in \cite{mano1}
the existence of a unique maximal solution
$y(t,\w,x)$ of the equation \eqref{1.introF} given by $\w\in\W$ for every initial
data $x\in\WW$ (i.e., with $y(s,\w, x)=x(s)$ for $s\in[-r,0]$),
which are defined on $[-r, \beta_{\w, x})$, with $\beta_{\w,x}\le\infty$. Since,
if $t\in[0,\beta_{\w,x})$, the map $u(t,\w,x)(s):=y(t+s,\w,x)$ belongs
to $\WW$, then
\eqref{1.introF} determines the local skew-product semiflow on $\W\times\WW$
\[
 \Pi\colon\mathcal{U}\subseteq\R^+\!\times\W\times\WW\to\W\times\WW,
 \quad (t,\w, x)\mapsto(\wt,u(t,\w, x))\,.
\]
In general, this semiflow is not continuous. But it satisfies strong
continuity properties, described in Theorem~\ref{3.flujocontinuo} below.
We will call it a {\em pseudo-continuous\/} semiflow.
Its interest relies on
the fact that, despite the lack of global continuity, it allows us to
use the classical tools of topological dynamics in the analysis of the
behaviour of its orbits, i.e., in the qualitative analysis of the solutions of
\eqref{1.introF}. In particular,
the restriction of $\Pi$ to positively invariant compact subsets is continuous.
\par
Let $\mK\subset\W\times\WW$ be a positively $\Pi$-invariant subset
projecting over the whole base and containing backward extensions of all its points.
One of our main goals in this paper is to characterize the exponential stability of $\mK$
and of the semiorbits that it contains in terms of the Lyapunov exponents
of its elements with respect to the linearized semiflow. More precisely, let us
introduce the set of pairs \lq\lq(equation, initial data)" which satisfy
the compatibility condition given by the vector field, namely
\[
 \mC_0:=\{(\w,x)\in\W\times C^1\,|\;\dot{x}(0^-)=F(\w,x(0), x(-\tau(\w,x)))\}\,,
\]
and define $L\colon\mC_0\to\mathcal{L}(\WW, \R^{n})$ by
\[
\begin{split}
L(\w,x)\phi&:=D_2F(\w,x(0), x(-\tau(\w,x)))\phi(0)\!+
\!D_3F(\w,x(0),x(-\tau(\w,x)))\phi(-\tau(\w,x))\\
&\quad\,-D_3F(\w,x(0), x(-\tau(\w,x)))\dot{x}(-\tau(\w,x))\pu D_2\tau(\w,x)\phi\,.
\end{split}
\]
The results of Section 3 of \cite{hart1} and Section 4 of~\cite{mano1} prove that,
if $(\w,x)\in\mC_0$ and $t\in[0,\beta_{\w,x})$,
then: there exists the linear map $u_x(t,\w,x)\colon\WW\to\WW$ and is continuous;
it determines the Fr\'{e}chet derivative of $u(t,\w,x)$ with respect to $x$;
and $(u_{x}(t,\w,x)\,v)(s)=z(t+s,\w,x,v)$ where $z(t,\w,x,v)$
is the solution of the variational equation $\dot{z}(t)=L(\Pi(t,\w,x))\,z_t$ with
$z(s)=v(s)$ for $s\in[-r, 0]$.
In addition, $\mK\subset\mC_0$, which allows us to
consider the linear skew-product semiflow
\begin{equation}\label{1.PiL}
\Pi_L\colon\R^+\!\times\mK\times\WW\to\mK\times\WW,\quad
(t,\w,x,v)\mapsto(\Pi(t,\w,x),w(t,\w,x,v))
\end{equation}
for $w(t,\w,x,v)(s)=z(t+s,\w,x,v)$ (which is a new pseudo-continuous semiflow)
in order to define the Lyapunov exponents and to derive the stability properties
of $\mK$ from the characteristics of these exponents. Note that this question is not
trivial, since: $\mC_0$ has empty interior, and there are
cases for which the map $u_{x}(t,\w,x)$ is not defined for all
$(\w,x)\in \W\times\WW$, i.e., for which $u(t,\w,x)$ does not
admit directional derivatives in $\WW$ (see \cite{hart4}).
\par
Let us briefly explain the structure and main results of the paper.
In Section~\ref{sec2}, we introduce the concepts of topological
dynamics required in the following pages. We also recall the definition
of exponential stability, and the notion and basic properties of the
upper Lyapunov exponent for a positively invariant compact set
(in the terms of Sacker and Sell~\cite{SackerSellDich},
Chow and Leiva \cite{chle1,chle2}, and Shen and Yi \cite{shyi}).
\par
In Section~\ref{sec3} we describe in detail the family of SDDEs and analyze some
of its properties. In particular, we prove that every bounded and
positively $\Pi$-invariant
set contains a positively $\Pi$-invariant compact subset which is maximal for
the property of existence of backward extension of its semiorbits.
In the rest of the Introduction, $\mK$ will be a
positively $\Pi$-invariant compact subset such that all its elements
admit backward extension in it. Such a set $\mK$ is contained in $\mC_0$,
and so we can define
the semiflow $\Pi_L$ on $\mK\times\WW$ by \eqref{1.PiL}.
In addition, the conditions assumed on the vector field and
the standard theory of FDEs ensure that the solutions of the variational equation
also define the continuous skew-product semiflow
\[
 \wit\Pi_L\colon\R^+\!\times \mK\times C\to\mK\times C\,,\quad
 (t,\w,x,v)\mapsto(\Pi(t,\w,x),w(t,\w,x,v))\,,
\]
where $w(t,\w,x,v)$ represents the same function as above.
We show that, given $(\w,x)\in\mK$ and $T\geq r$, the map
$C\to\WW$, $v\mapsto w(T,\w,x,v)$
is continuous, and that it is also compact if $T\geq 2r$. This property is the main
tool in the proof of a result which will be fundamental in the paper: if we follow
the classical way to define the upper Lyapunov exponent of $\mK$ with respect to
the pseudo-continuous flow $\Pi_L$, it agrees with the (classical) one with respect to
$\wit{\Pi}_L$.
\par
In Section \ref{sec4} we strength slightly the Lipschitz conditions assumed on $\tau$,
and consider a set $\mK$ as described above, with the additional property that
it projects over the whole base.
Let $\lambda_\mK$ be its upper Lyapunov exponent. We prove that the
condition $\lambda_{\mK}<0$ is equivalent
to the exponential stability of $\mK$ for the usual Lipschitz norm, and also to the
exponential stability of $\mK$ expressed in terms of the supremum norm.
This extends to our nonautonomous setting results previously proved by Hartung in
\cite{hart2} in the case of periodic SDDEs.
\par
Section~\ref{sec5} considers again the initial conditions assumed on
$\tau$, and contains the adequate version of the characterization of the exponential stability.
The results are very similar to that of Section~\ref{sec4}: the only difference
relies in the expression of the exponential stability in terms of the norm in $C$.
In this less restrictive setting, we go further in the analysis.
We prove that, if base $\W$ is minimal and $\lambda_{\mK}<0$,
then $\mK$ is an $m$-cover of the base flow
$(\W,\sigma,\R)$ admitting a flow extension. We also establish several
properties on its domain of attraction. The paper is completed with the
following nice extension: if $\mP$ is a positively $\Pi$-invariant compact set
such that $\lambda_{\mM}<0$ for every minimal set $\mM\in\mP$,
then $\mP$ only contains a finite number of minimal sets; and, in addition,
the subsets of $\mP$ determined by its intersection with the domains of attraction
of its minimal subsets agree with the connected components of $\mP$.
A conclusion of all the preceding results closes the paper:
the existence of a uniformly exponentially stable solution
of a single uniformly almost periodic SDDE ensures the existence of
exponentially stable almost periodic solutions.
\par
We close this introduction by pointing out that the conclusions of this paper
provide the tools to develop appropriate versions for the context of nonautonomous
SDDEs of some applied models described by
Arino {\em et al.} \cite{arsa}, Smith \cite{smit},
Wu \cite{wu}, Hartung {\em et al.}~\cite{hkvv},
Novo {\em et al.} \cite{noos4}, Insperger and St\'{e}p\'{a}n \cite{inst},
and some of the references therein. In particular, the results of this paper
are the key point in the extension to the case of nonautonomous SDDEs
of the results about exponential stability for biological
neural networks of~\cite{noos4}, which will be developed elsewhere.
\section{Some preliminaries}\label{sec2}
In this section we introduce the basic notions of topological dynamics which will be
used throughout the paper. They can be found in Sacker and
Sell~\cite{SackerSell,SackerSellDich}, Chow and Leiva \cite{chle1,chle2},
Shen and Yi \cite{shyi}, and references therein.
\par
Let $\W$ be a complete metric space. A ({\em real, continuous}) {\em flow\/} $(\W, \sigma, \R)$
is defined by a continuous map $\sigma\colon\R\times\W\to\W$, $(t,\w)\mapsto\sigma(t,\w)$
satisfying
\begin{itemize}
  \item[\hypertarget{f1}(f1)] $\sigma_0=\text{Id}$,
  \item[\hypertarget{f2}(f2)] $\sigma_{t+s}=\sigma_{t}\circ\sigma_{s}$ for all $t,s\in\R$,
\end{itemize}
where $\sigma_{t}(\w)=\sigma(t,\w)$ for all $t\in\R$ and $\w\in\W$. The set
$\{\sigma_{t}(\w)\,|\,t\in\R\}$ is called the {\em orbit\/} of the point $\w\in\W$.
A subset $\mM\subseteq\W$ is $\sigma$-{\em invariant\/}
(or just {\em invariant\/}) if $\sigma_{t}(\mM)\subseteq\mM$ for every $t\in\R$
(which clearly ensures that $\sigma_t(\mM)=\mM$ for every $t\in\R$).
A subset $\mM\subseteq\W$ is called {\em minimal\/} if it is compact, $\sigma$-invariant,
and its only nonempty compact $\sigma$-invariant subset is itself. Zorn's lemma ensures
that every compact and $\sigma$-invariant set contains a minimal subset. Note that
a compact $\sigma$-invariant subset is minimal if and only if each one of its orbits is dense.
We say that the continuous flow $(\W, \sigma, \R)$ is {\em recurrent\/} or {\em minimal\/}
if $\W$ itself is minimal. The flow is {\em local\/}
if the map $\sigma$ is defined, continuous, and satisfies \hyperlink{f1}{(f1)} and
\hyperlink{f2}{(f2)} (this last one whenever it makes sense) on an open subset
$\mO\subseteq\R\times\W$ containing $\{0\}\times\W$.
And, in the case of a compact base $\W$,
the flow $(\W, \sigma, \R)$ is {\em almost periodic\/} if for every
$\ep>0$ there exists $\delta=\delta(\ep)>0$ such that, if $\w_1,\w_2\in\W$ satisfy
$d_\W(\w_1,\w_2)<\delta$ (where $d_\W$ is the distance on $\W$), then
$d_\W(\sigma_{t}(\w_1), \sigma_{t}(\w_2))<\ep$ for all $t\in\R$.
\par
As usual, we represent $\R^\pm=\{t\in\R\,|\,\pm t\geq 0\}$.
If $\sigma\colon\R^+\!\times\W\to\W$, $(t,\w)\mapsto\sigma(t,\w)$ is a continuous map
which satisfies the properties \hyperlink{f1}{(f1)} and  \hyperlink{f2}{(f2)}
described above for all $t,s\in\R^+$, then
$(\W,\sigma,\R^+)$ is a ({\em real, continuous}) {\em semiflow\/}.
The set $\{\sigma_{t}(\w)\,|\,t\ge 0\}$ is the ({\em positive}) {\em semiorbit\/} of the point
$\w\in\W$. If this semiorbit is relatively compact,
the {\em omega-limit set $\mO(\w)$ of the point $\w\in\W$} (or {\em of its semiorbit})
is the set of limits of sequences of the form
$(\sigma_{t_m}(\w))$ with $(t_m)\uparrow\infty$.
A subset $\mM\subseteq\W$ is {\em positively $\sigma$-invariant\/}
(or just {\em $\sigma$-invariant\/}, or {\em invariant}) if $\sigma_{t}(\mM)\subseteq \mM$
for all $t\geq0$. This is the case of all the omega-limit sets.
A positively $\sigma$-invariant compact set $\mM$ is {\em minimal\/} if it
does not contain properly any positively $\sigma$-invariant compact set. If $\W$ is minimal,
we say that the semiflow is {\em minimal}. The semiflow is {\em local\/} if the map $\sigma$
is defined, continuous, and satisfies \hyperlink{f1}{(f1)} and  \hyperlink{f2}{(f2)}
on an open subset $\mO\subseteq\R^+\!\times\W$
containing $\{0\}\times\W$. In this case, the definitions of positively invariant set and
minimal set are the same as above. In particular, they are composed of globally defined
positive semiorbits, so that the restriction of the semiflow to one of these sets is global.
Note that, in the local case, we need to be sure that a semiorbit is (at least)
globally defined in order to talk about its omega-limit set.
\par
A continuous semiflow $(\W,\sigma,\R^+)$ {\em admits a continuous
flow extension\/} if there exists a
continuous flow $(\W,\overline{\sigma},\R)$ such that $\overline{\sigma}(t,\w)=\sigma(t,\w)$
for all $t\in\R^+$ and $\w\in\W$. Let $\mM$ be a positively $\sigma$-invariant compact set.
A point $\w\in \mM$ {\em admits a backward extension in $\mM$\/} if there exists a
continuous map $\theta_{\w}\colon\R^{-}\!\to \mM$ such that $\theta_{\w}(0)=\w$ and
$\sigma(t, \theta_{\w}(s))=\theta_{\w}(t+s)$ whenever $s\le-t\le0$. We will
use the words \lq \lq admits {\em at least} a backward extension in $\mM$\/" to
emphasize the fact that the extension may be non unique.
The set $\mM$ {\em admits a continuous
flow extension\/} if the semiflow restricted to it admits one.
It is known that, if the semiorbit of a point $\w\in\W$
is relatively compact, then any element of the omega-limit set $\mO(\w)$
admits at least a backward extension in $\mO(\w)$
(see Proposition II.2.1 of~\cite{shyi}); and that, in the case that $\W$ is locally
compact, the existence of a continuous flow
extension for $\mM$ is equivalent to the existence and uniqueness of a backward extension
for each of its points (see in Theorem II.2.3 of~\cite{shyi}).
\par
A (local or global, continuous) semiflow is of
{\em skew-product type\/} when it is defined on a vector
bundle and has a triangular structure. More precisely, let $(\W,\sigma,\R^+)$ be a global
semiflow on a {\em compact\/} metric space $\W$, and let $X$ be a Banach space.
We will represent $\wt=\sigma_t(\w)=\sigma(t,\w)$. A local semiflow ($\W\times X, \Pi, \R^+$)
is a {\em skew-product semiflow with base\/} $(\W,\sigma,\R)$ {\em and fiber $X$}
if it takes the form
\begin{equation}\label{2.1skew}
\Pi\colon\mU\subseteq\R^+\!\times\W\times X\to\W\times X\,,
\quad (t,\w, x)\mapsto(\wt,u(t,\w, x))\,.
\end{equation}
Property \hyperlink{f2}{(f2)}
means that the map $u$ satisfies the {\em cocycle property\/}
$u(t+s,\w,x)=u(t,\ws, u(s,\w,x))$ whenever
the right-hand function is defined. It is frequently assumed that the base semiflow is in fact
a flow. We will add explicitly this hypothesis when we use it.
\par
Now we state some definitions about stability. All of them refer to properties of the
skew-product semiflow $\Pi$ defined by \eqref{2.1skew}. The norm on $X$ and the corresponding
distance are represented by $\n\pu _X$ and $d_X$. A compact set $\mK\subset\W\times X$
{\em projects over the whole base} if for any $\w\in\W$ there exists $x\in X$ such that
$(\w,x)\in\mK$. This is the type of sets on which the concept of stability make sense.
Note that this is always the case if $\mK$ is positively $\Pi$-invariant and $\W$
is minimal.
\begin{defi}\label{2.defi1}
A positively $\Pi$-invariant compact set $\mK\subset\W\times X$ projecting over the whole
base is {\em uniformly stable} if for
any $\ep>0$ there exists $\delta(\ep)>0$, such that, if the points $(\w,\bar x)\in \mK$ and
$(\w,x)\in\W\times X$ satisfy $d_X(x, \bar x)<\delta(\ep)$, then $u(t,\w,x)$ is defined
for $t\in[0,\infty)$ and $d_X(u(t,\w, x), u(t,\w,\bar x))\le\ep$ for all $t\geq0$.
The restricted semiflow $(\mK, \Pi, \R^+)$ is said to be {\em uniformly stable}.
\end{defi}
\begin{defi}\label{2.defi2}
A positively $\Pi$-invariant compact set $\mK\subset\W\times X$ projecting over the whole
base is {\em uniformly asymptotically stable}
if it is uniformly stable and, in addition, there exists $\delta>0$ such that, if
the points $(\w,\bar x)\in \mK$ and  $(\w,x)\in\W\times X$ satisfy $d_X(x, \bar x)<\delta$,
then $u(t,\w,x)$ is defined for $t\in[0,\infty)$  and
$\lim_{t\to\infty}d_X(u(t,\w, x),u(t,\w,\bar x))=0$ uniformly in $(\w,\bar x)\in \mK$.
The restricted semiflow $(\mK, \Pi, \R^+)$ is said to be {\em uniformly asymptotically stable}.
\end{defi}
\begin{defi}\label{2.defi3}
A positively $\Pi$-invariant compact set $\mK\subset\W\times X$ projecting over the whole
base is {\em exponentially stable} if there exist $\delta_0>0$, $C>0$ and $\alpha>0$,
such that, if the points $(\w,\bar x)\in \mK$ and
$(\w,x)\in\W\times X$ satisfy $d_X(x, \bar x)<\delta_0$,
then $u(t,\w,x)$ is defined for $t\in[0,\infty)$ and
$d_X(u(t,\w, x), u(t,\w,\bar x))\le C\,e^{-\alpha·t}\,d_X(x, \bar x)$ for all $t\geq0$.
The restricted semiflow $(\mK, \Pi, \R^+)$ is said to be {\em exponentially stable}.
\end{defi}
The next definitions and properties
refer to the special case of a {\em linear\/} skew-product semiflow.
A global continuous skew-product semiflow $\Pi$ is {\em linear\/} if it takes the form
\begin{equation}\label{2.defpilin}
 \Pi\colon\R^+\!\times\W\times X\to\W\times X\,,\quad
 (t,\w, x)\mapsto(\wt,\phi(t,\w)\,x)\,,
\end{equation}
where $\phi(t,\w)$ is a bounded linear operator on $X$; in other words, if
$u(t,\w,x)$ is linear in $x$ for each $(t,\w)\in\R^+\!\times\W$.
In what follows, we assume that the base $(\W,\sigma,\R)$ is a flow (not just a semiflow)
on a compact metric space. This hypothesis will be weakened later:
see Remark \ref{2.notalifting}.
\begin{defi}\label{2.defLyap}
The {\em upper Lyapunov exponent $\lambda^+_{s}(\w)$ of $\w\in\W$ for
the semiflow $(\W\times X,\Pi,\R^+)$} given by \eqref{2.defpilin} is
\[
 \lambda^+_{s}(\w):=\sup_{x\in X,\,x\ne 0}\lambda^+_{s}(\w,x)\,,
\]
where\vspace{-.1cm}
\[
\lambda^+_{s}(\w,x):=\limsup_{t\to\infty}\frac{1}{t}\:\ln\n{\phi(t,\w)\,x}_X\,;
\]
and the {\em upper Lyapunov exponent of the set $\W$ for the
semiflow $(\W\times X,\Pi,\R^+)$} is
\[
\lambda_{\W}:=\sup_{\w\in\W}\lambda^+_{s}(\w)\,.
\]
\end{defi}
Proposition 2.1 of \cite{chle2} proves that $\lambda_\W<\infty$, and
Theorem 4.2 of \cite{chle1} shows that
\begin{equation}\label{2.lyap}
 \lambda^+_{s}(\w)=\limsup_{t\to\infty}\frac{1}{t}\:\ln\n{\phi(t,\w)}_{\Lin(X,X)}\,.
\end{equation}
In addition,
Proposition II.4.1 and Corollary II.4.2 of~\cite{shyi} show that
\begin{equation}\label{2.lyap2}
 \forall\;\mu>\lambda_\W \;\;\exists\;
 k_\mu\ge 1\;\text{ such that }\;\n{\phi(t,\w)}_{X}\le k_\mu\,e^{\mu\,t}
 \;\;\forall\;\w\in\W\,.
\end{equation}
\begin{nota}\label{2.notalifting}
We will very often work with a linear skew-product semiflow
\[
 \Pi\colon\R^+\!\times\W\times X\to\W\times X\,,\quad
 (t,\w, x)\mapsto(\wt,\phi(t,\w)\,x)\,,
\]
for which the base $(\W,\sigma,\R^+)$ is a global semiflow on a compact
metric space, with the fundamental
property that each one of its elements admits at least
a backward extension in $\W$. (Recall that this is the situation at least in the case that
$\W$ is minimal, which we do not assume in what follows.)
Our next purpose is to show that the previous
definitions of Lyapunov exponents and the properties that we will require
make sense also in this setting, in which the existence of a flow extension
on $\W$ is not required.
Part of the argument is taken from Section II.2.2 of~\cite{shyi} and from
Theorem 10 of Chapter 4 of \cite{SackerSell}. Let us define
\[
 \W^*=\{\xi\in C(\R,\W)\,|\;\sigma(t,\xi(s))=\xi(t+s)\text{ for $t\ge 0$ and $s\in\R$}\}\,;
\]
that is, the elements of $\W^*$ are the global orbits provided by all the backward extensions
of all the elements of $\W$.
Then $\W^*$ is a compact subset of $C(\R,\W)$ for the compact-open
topology of $C(\R,\W)$, which agrees with the topology given by the distance
\[
 d_{C(\R,\W)}(\xi_1,\xi_2):=\Sum_{m=1}^\infty \,
 \frac{1}{2^m}\;\max_{s\in[-m,m]} d_\W (\xi_1(s),\xi_2(s))\,;
\]
that is, $\W^*$ is a compact metric space.
Note that we have assumed that for every $\w\in\W$ there exists
at least a point $\xi\in\W^*$ with $\xi(0)=\w$. As said before, it is proved in
Theorem II.2.3 of \cite{shyi}
that this correspondence is one-to-one if and only if the semiflow $(\W,\sigma,\R^+)$
admits a continuous flow extension.
In this more general setting, it is also possible to define a continuous flow on the
set $\W^*$,
called {\em the lifting flow\/}, which, roughly speaking, projects onto $\W$.
It is given by
$\sigma^*\colon\R\times\W^*\to\W^*,\;(t,\xi)\mapsto \xi\pu t$, with
$(\xi\pu t)(s)=\xi(t+s)$. Hence, whenever
$\w=\xi(0)$ we have, for $t\ge 0$,
\[
 \wt=\sigma(t,\w)=\sigma(t,\xi(0))=\xi(t)=\sigma^*(t,\xi)(0)=(\xi\pu t)(0)\,.
\]
Now we can define
\[
 \Pi^*\colon\R^+\!\times\W^*\times X\to\W^*\times X\,,\quad
 (t, \xi, x)\mapsto(\xi\pu t, \phi^*(t,\xi)\,x)=(\xi\pu t, \phi(t,\xi(0))\,x)\,,
\]
which is a continuous linear skew-product semiflow with base flow $(\W^*,\sigma^*)$, and
define the corresponding upper Lyapunov exponent $(\lambda^*)^+_s(\xi)$ for $\xi\in\W^*$ and
$\lambda^*_{\W^*}:=\sup_{\xi\in\W^*}(\lambda^+)^*_{s}(\xi)$.
It is clear that $(\lambda^*)^+_s(\xi)$ only depends on $\xi(0)$, which belongs to $\W$.
In other words, we can define $\lambda_s^+(\w)$ and
$\lambda_\W$ directly from $\Pi$, as in Definition~\ref{2.defLyap},
and then we have $(\lambda^*)^+_s(\xi)=\lambda_s^+(\w)$ for $\w=\xi(0)$, and
$\lambda_\W=\lambda^*_{\W^*}$.
And it is clear that \eqref{2.lyap} and \eqref{2.lyap2} are still valid.
\par
Note finally that, if $\W$ is minimal, then $\W^*$ is also minimal. In order to prove this
assertion,
we must take $\xi_0,\,\xi$ in $\W^*$, and find a sequence
$(t_m)$ in $\R$ such that $\xi_0\pu t_m$ converges to $\xi$ uniformly on $[-k,k]$ 
for all $k>0$.
Let us take $\w=\xi(-k)$ and $\w_0=\xi_0(-r)$, use the minimality of $\W$ to take
a sequence $(t_m)$ in $\R^+$ with $\w=\lim_{m\to\infty}\w_0\pu t_m$, and deduce from the uniform continuity of $\sigma$ on $\W\times[0,2k]$ that
$\wt=\lim_{m\to\infty}\w_0\pu(t_m+t)$ uniformly on $t\in[0,2r]$; that is,
$\xi(t-k)=\lim_{m\to\infty}(\xi_0\pu t_m)(t-k)$ uniformly on $t\in[0,2k]$,
which is the sought-for property.
\end{nota}
We complete this section by fixing some notation which will be used throughout the paper.
Given two Banach spaces $(X,\n{\pu }_X)$ and $(Y,\n{\pu }_Y)$,
$\Lin(X,Y)$ represents the set of bounded linear maps $\phi\colon X\to Y$ equipped
with the operator norm $\n{\phi}_{\Lin(X,Y)}=\sup_{\n{x}_X=1}\n{\phi(x)}_Y$.
The maximum delay of the equations that we will consider is represented by $r>0$.
The set $C$ represents the Banach space of continuous functions
$C([-r, 0], \R^n)$ equipped with the norm $\n{\psi}_{C}:=\sup_{s\in[-r, 0]}|\psi(s)|$,
where $|\cdot|$ represents the Euclidean norm in $\R^n$. The subset $C^1\subset C$ is
given by the functions which have continuous derivative on $[-r,0]$ (one-sided
derivatives at the end points of the interval). The set $L^\infty$ is
the space of Lebesgue-measurable functions $\psi\colon[-r, 0]\to\R^n$
which are {\em essentially bounded\/}, which means that there exists $k\ge 0$ such
that the set $\{x\in[-r, 0]\:|\;|\psi(x)|>k\}$ has zero measure. The norm on $L^\infty$,
which is defined as the inferior of the set of real numbers $k\ge 0$ with the previous
property, is denoted by $\n\pu _{L^\infty}$. The set $\WW$ is the Banach space of
Lipschitz-continuous functions $\psi\colon[-r, 0]\to\R^n$ equipped with the
Lipschitz norm $\n{\psi}_{\WW}:=\max\{\n{\psi}_C, \n{\dot{\psi}}_{L^\infty}\}$.
Note that Arzel\'{a}--Ascoli theorem ensures that any bounded set of $\WW$ is
relatively compact in $C$.
Finally, given a continuous function $x\colon[-r, \gamma]\to\R^{n}$ for $\gamma>0$ and a time
$t\in[0,\gamma]$, we denote by $x_{t}\in C$ the function defined by
$x_{t}(s)=x(t+s)$ for $s\in[-r, 0]$.
\section{FDEs with state-dependent delay} \label{sec3}
Let $(\W, \sigma, \R)$ be a continuous flow on a compact metric space.
As in the previous section, we write $\wt=\sigma(t,\w)$ for $t\in\R$ and $\w\in\W$.
Given $F\colon\W\times\R^{n}\times\R^{n}\to\R^{n}$ and $\tau\colon\W\times C\to[0, r]$,
we consider the family of nonautonomous SDDEs
\begin{equation}\label{3.eq}
\dot{y}(t)=F(\wt,y(t), y(t-\tau(\wt,y_{t})))\,,\qquad t\geq 0\,,
\end{equation}
for $\w\in\W$.
All or part of the following conditions will be assumed on $F$ and $\tau$:
\begin{itemize}
  \item[H1] \hypertarget{3.H1}
  $F\colon\W\times\R^{n}\times\R^{n}\to\R^{n}$ is continuous,
  and its partial derivatives w.r.t.~its second and third arguments
  exist and are continuous on $\W\times\R^{n}\times\R^{n}$.
  In particular,
  the functions $D_iF\colon \W\times\R^{n}\times\R^{n}\to\Lin(\R^n,\R^n)$
  exist and are continuous for $i=2,3$.
  \item[H2] 
   \begin{itemize}
   \item[(1)] \hypertarget{3.H21}
   $\tau\colon\W\times C\to[0,r]$ is continuous and differentiable
   w.r.t.~its second argument, with $D_2\tau\colon\W\times C\to\Lin(C, \R)$ continuous.
   \item[(2)] \hypertarget{3.H22} $D_2\tau$ is locally Lipschitz-continuous
   in the following sense:
   for every compact subset $\mK\subset \W\times C$ there exists a constant
   $L_2=L_2(\mK)>0$ such that
       \[
       \qquad\n{D_2\tau(\w,x_1)-D_2\tau(\w,x_2)}_{\Lin(C, \R)}
       \le L_2\n{x_1-x_2}_{C}
       \]
       for all $(\w,x_1)$ and $(\w,x_2)$ in $\mK$.
   \end{itemize}
\end{itemize}
\begin{nota}
Note that \hyperlink{3.H21}{H2(1)} ensures the next property:
\begin{itemize}
\item[H2]
\begin{itemize}
   \item[(3)] \hypertarget{3.H23}
   $\tau$ is locally Lipschitz-continuous in this sense: for every
   compact subset $\mK\subset \W\times C$ there exists a constant $L_1=L_1(\mK)>0$ such
   that
       \[
       |\tau(\w,x_1)-\tau(\w,x_2)|\le L_1\n{x_1-x_2}_{C}
       \]
       for all $(\w,x_1)$ and $(\w,x_2)$ in $\mK$.
\end{itemize}
\end{itemize}
In order to prove this assertion, we take a compact subset
$\mK\subset\W\times C$ and note that the set
$\bar\mK=\{(\w,s\,x_1+(1-s)\,x_2)\,|\;(\w,x_1),\,(\w,x_2)\in\mK \text{ and }s\in[0,1]\}$
is also compact in $\W\times C$.
We define $L_1=L_1(\mK):=\sup\{\n{D_2\tau(\w,\bar x)}_{\Lin(C,\R)}\,|
\;(\w,\bar x)\in\bar \mK\}$. Then,
\[
 |\tau(\w,x_1)-\tau(\w,x_2)|\le \left|\int_0^1 D_2\tau(\w,s\,x_1+(1-s)\,x_2)
 (x_1-x_2)\,ds\right|
 \le L_1\n{x_1-x_2}_C
\]
whenever $\w\in\W$ and $x_1$, $x_2\in \mK$, as asserted.
\end{nota}
Let us now summarize the most basic properties of the solutions of the equation~\eqref{3.eq}
ensured by hypotheses \hyperlink{3.H1}{H1} and \hyperlink{3.H21}{H2(1)}.
In the statement of the next theorem a fundamental role is played by
the set of pairs \lq\lq(equation, initial datum)"
which satisfy the compatibility condition given by the vector field; namely
\begin{equation}\label{3.defC0}
 \mC_0=\{(\w,x)\in\W\times C^1\,|\;\dot{x}(0^+)=F(\w,x(0), x(-\tau(\w,x)))\}\,.
\end{equation}
The next result, strongly based on previous properties proved in \cite{hart1}, is proved in
Theorem 3.3 and Corollary 3.4 of \cite{mano1}.
\begin{teor}\label{3.flujocontinuo}
Suppose that conditions {\rm \hyperlink{3.H1}{H1}} and {\rm \hyperlink{3.H2}{H2(1)}} hold.
Then,
\begin{itemize}
\item[(i)] for $\w\in\W$ and $x\in\WW$, there exists a unique
 maximal solution $y(t,\w,x)$ of the equation \eqref{3.eq} corresponding to $\w$
 satisfying $y(s,\w,x)=x(s)$ for $s\in[-r,0]$, which is defined for
 $t\in[-r,\beta_{\w,x})$ with $0<\beta_{\w,x}\le\infty$. In particular,
 $y(t,\w,x)$ is continuous on $[-r,\beta_{\w,x})$ and satisfies~\eqref{3.eq} on
 $(0,\beta_{\w,x})$, and there exists the
 lateral derivative $\dot y(0^+\!,\w,x)=F(\w,x(0),x(-\tau(\w,x))$.
\end{itemize}
Let us define $u(t,\w,x)(s):=y(t+s,\w,x)$
for $(\w,x)\in\W\times\WW$, $t\in[0,\beta_{\w,x})$, and $s\in[-r,0]$. Then,
\begin{itemize}
\item[(ii)] $u(t,\w,x)\in\WW$ for all $t\in[0,\beta_{\w,x})$.
\item[(iii)] If $\sup_{t\in[0,\beta_{\w,x})}
 \n{u(t,\w,x)}_C<\infty$ then $\beta_{\w,x}=\infty$ and, in addition,
 the set $\{(\wt,u(t,\w,x))\,|\;t\in[r,\infty)\}$ is relatively compact
 in $\W\times\WW$.
\end{itemize}
Let us further define $\mC_0\subset\W\times\WW$ by \eqref{3.defC0} and
\begin{align}
 &\mU:=\{(t,\w,x)\,|\;(\w,x)\in\W\times\WW,\;t\in[0,\beta_{\w,x})\}
 \subset\R^+\!\times\W\times\WW,\nonumber\\
 &\Pi\colon\mU\to\W\times W^{1,\infty}\,,\quad (t,\w,x)\mapsto(\wt,u(t,\w,x))\,,
 \label{3.defPi}\\
 &\wit\mU:=\{(t,\w,x)\in\mU\,|\;t\ge r\}\subset\R^+\!\times\W\times\WW,\nonumber\\
 &\mU^{\,0}:=\{(t,\w,x)\,|\;(\w,x)\in\mC_0,\;t\in[0,\beta_{\w,x})\}\subset
 \R^+\!\times\W\times\WW,\nonumber
\end{align}
and provide $\mU$, $\wit\mU$, $\mC_0$ and $\mU^{\,0}$
with the respective subspace topologies. Then,
\begin{itemize}
\item[(iv)] the set $\mU$ is open in $\R^+\!\times\W\times\WW$ and
 $\Pi$ satisfies conditions {\rm \hyperlink{(1)}{(f1)}} and {\rm \hyperlink{(2)}{(f2)}}
 of Section {\rm \ref{sec2}} (wherever it makes sense, and with $\W$
 replaced by $\W\times\WW$).
\item[(v)] The map $\mU\to\W\times C\,,\;
 (t,\w,x)\mapsto(\wt,u(t,\w,x))$ is continuous.
\item[(vi)] The map
 $\wit\mU\to\W\times\WW,\;(t,\w,x)\mapsto(\wt,u(t,\w,x))$ is continuous.
\item[(vii)] Let us fix $\wit t\ge 0$ with $\mU_{\,\wit t}:=\{(\w,x)\,|\;(\wit t,\w,x)\in\mU\}$
 nonempty. Then the map
 $\mU_{\,\wit t}\to\W\times\WW,\;(\w,x)\mapsto
 (\w\pu\wit t,u(\wit t,\w,x))$ is continuous.
\item[(viii)] The map
 $\mU^{\,0}\to\mC_0\subset\W\times\WW,\;(t,\w,x)\mapsto (\wt,u(t,\w,x))$ is continuous.
\item[(ix)] Let $\mK\subset\W\times\WW$
 be a positively $\Pi$-invariant compact set. Then the restriction
 of $\Pi$ to $\mK$ defines a global continuous semiflow on $\mK$.
\end{itemize}
\end{teor}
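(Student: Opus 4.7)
The statement packages nine properties of the local skew-product semiflow generated by \eqref{3.eq}. My plan is to organise the argument into three blocks, leaning on Section 3 of \cite{hart1} and on Theorem 3.3 and Corollary 3.4 of \cite{mano1} for the technical details.

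For the local well-posedness block (i)--(iii), I would reduce, for each fixed $\w$, to Hartung's SDDE framework. The key observation is that for initial data $x\in\WW$ the composed delay $t\mapsto\tau(\wt,y_t)$ is Lipschitz on short intervals (which uses H2(1) together with the remark recorded above), so the vector field of \eqref{3.eq} inherits local Lipschitz regularity along any prospective solution. A contraction argument in the complete metric space of functions that are Lipschitz on $[0,\delta]$ and that equal $x$ on $[-r,0]$ then yields local existence and uniqueness, and Zorn's lemma extends this to a maximal interval $[-r,\beta_{\w,x})$; the lateral derivative at $0^+$ is read directly off the equation. Statement (ii) is handled by splitting $[-r,t]$ at the origin: on $[-r,0]$ the Lipschitz constant of $x$ controls $\dot u$, while on $[0,t]$ the continuous $\dot y$ is bounded on compacts. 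For (iii), H1 and the compactness of $\W$ turn a $C$-bound on $u(\cdot,\w,x)$ into a uniform bound on $\dot y$, which gives $\beta_{\w,x}=\infty$ by the usual continuation argument; Arzel\`a--Ascoli then delivers the relative compactness claimed, using that bounded sets of $\WW$ are relatively compact in $C$ and that the uniform Lipschitz constant transfers through the limit.

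Next I would handle the openness and $C$-continuity block (iv) and (v). The cocycle identity and property (f1) come from uniqueness plus translation invariance along the base flow, which yields the semigroup structure on $\mU$. Openness of $\mU$ and joint continuity of $\Pi$ as a map into $\W\times C$ are established by a Gronwall-type estimate on a compact neighbourhood of the trajectory over $[0,\wit t\,]$, using that H1 and H2(1) supply uniform Lipschitz constants on that neighbourhood.

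The main obstacle I anticipate is upgrading continuity from $\W\times C$ to $\W\times\WW$, that is, proving (vi), (vii), (viii), and finally (ix). At a fixed time $\wit t$ the function $u(\wit t,\w,x)$ consists of a piece of the shifted $\dot x$ on $[-r,-\wit t\,]\cap[-r,0]$ glued to the forward-flow expression $\dot y=F(\cdot)$ on $[-\wit t\,,0]\cap[-r,0]$: $\WW$-convergence of $x_n$ gives $L^\infty$-convergence on the first piece, while (v) combined with the continuity of $F$ and of $\tau$ gives $L^\infty$-convergence on the second; since the seam sits at the fixed location $s=-\wit t$, this yields (vii). For (vi) the seam location moves with $t$, and only the restriction $t\ge r$ pushes it outside $[-r,0]$, making the function entirely forward-determined and jointly continuous in $(t,\w,x)$ into $\W\times\WW$. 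For (viii) one must handle small $t$; the defining compatibility $\dot x(0^+)=F(\w,x(0),x(-\tau(\w,x)))$ of $\mC_0$ eliminates the $L^\infty$-jump at the moving seam $s=-t$, and H2(1) guarantees that the location $-\tau(\w,x)$ varies continuously with $(\w,x)$, so the one-sided values of $\dot u$ match under convergence inside $\mC_0$. Controlling this uniformly when $(\w,x)$ moves inside $\mC_0$, which has empty interior in $\W\times\WW$, is the genuinely delicate point; it is precisely the content of Corollary 3.4 of \cite{mano1}, whose argument I would reproduce. Finally (ix) follows because on a positively $\Pi$-invariant compact $\mK\subset\W\times\WW$ the $\WW$-topology and the $C$-topology coincide, and the $C$-continuity provided by (v) combined with (vi) therefore suffices to give a global continuous semiflow on $\mK$.
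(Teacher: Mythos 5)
The paper does not actually prove this theorem: it is imported wholesale from Theorem~3.3 and Corollary~3.4 of \cite{mano1} (in turn resting on Section~3 of \cite{hart1}), so there is no internal proof to compare yours against line by line. Your architecture is consistent with the cited arguments and with the pieces of the mechanism that do appear elsewhere in the paper (for instance, the proof of Theorem~\ref{3.teorexisteM}(i) uses exactly the ``forward-determined segment'' upgrade you describe for (vi)). Your identification of the seam at $s=-t$ as the obstruction to $\WW$-continuity for small $t$, the role of the compatibility condition defining $\mC_0$ in removing the derivative jump there, and the observation for (ix) that the $C$- and $\WW$-topologies coincide on a compact subset of $\W\times\WW$ (continuous bijection from a compact space onto a Hausdorff space) are all correct and capture the genuinely delicate points.

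One step, as written, does not deliver what is claimed. In (iii) you justify the relative compactness of $\{(\wt,u(t,\w,x))\,|\,t\in[r,\infty)\}$ \emph{in $\W\times\WW$} by Arzel\`a--Ascoli plus the fact that ``the uniform Lipschitz constant transfers through the limit.'' That argument only produces relative compactness in $\W\times C$: a uniformly Lipschitz sequence converging uniformly has a Lipschitz limit, but need not converge in the $\WW$-norm, since that requires $L^\infty$-convergence of the derivatives. The repair is the same mechanism you invoke for (vi), and it is the reason the statement restricts to $t\ge r$: for $t\ge r$ the entire segment $u(t,\w,x)$ lies in the forward-flow region, so $\dot y(t+s)=F(\w\pu(t+s),y(t+s),y(t+s-\tau(\w\pu(t+s),y_{t+s})))$ on all of $[-r,0]$, and uniform convergence of the functions together with the continuity of $F$ and $\tau$ (H1 and H2(1)) forces uniform convergence of the derivatives along the subsequence, hence convergence in $\WW$. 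This is precisely how the paper argues in the proof of Theorem~\ref{3.teorexisteM}(i); you should say it explicitly in (iii) rather than relying on the transfer of the Lipschitz constant. A second, minor imprecision: in (viii) the moving seam sits at $s=-t$, not at $s=-\tau(\w,x)$; the continuity of $\tau$ enters through the continuity in $(\w,x)$ of the compatibility value $F(\w,x(0),x(-\tau(\w,x)))$ that must match $\dot x(0^-)$ across the seam, not through the seam's location.
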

Note that point (i) states that
\begin{equation}\label{3.dery}
 \dot y(t,\w,x)=F(\wt,y(t,\w,x),y(t-\tau(\wt,u(t,\w,x))))
 \quad\text{for all $t\in[0,\beta_{\w,x})$}\,,
\end{equation}
where the derivative at $t=0$ must be understood as the right-hand derivative.
\begin{nota}\label{3.notamismas}
As anticipated in the Introduction, we will say that $\Pi$ is a {\em pseudo-continuous\/}
semiflow. The definitions of semiorbit,
positively $\Pi$-invariant set and of minimal set are the same.
Note that the positively
$\Pi$-invariance of a set $\mM$ ensures that $\R^+\!\times\mM\subseteq\mU$.
If $\mK$ is a positively $\Pi$-invariant compact set $\mK$, then also the definition
of existence of backward extension of its element $(\w,x)$ in $\mK$ is the same.
In addition, if a point
$(\w,x)$ has bounded $\Pi$-semiorbit (which ensures that $\beta_{\w,x}=\infty$ and
that $\{(\wt,u(t,\w,x))\,|\;t\in[r,\infty)\}\subset\W\times\WW$ is relatively compact),
we can define its omega-limit set as in Section \ref{sec2}:
Theorem~\ref{3.teorexisteM}(ii) will show that this causes no confusion.
Finally, also Definitions \ref{2.defi1}, \ref{2.defi2} and \ref{2.defi3} can
be directly adapted to $\Pi$.
\end{nota}
In most of this section, we will be working with a subset $\mK$ of
$\W\times\WW$ satisfying the following conditions
(see Section~\ref{sec2} and Remark~\ref{3.notamismas}):
\begin{hipos}\label{3.hipos}
Conditions \hyperlink{3.H1}{H1} and \hyperlink{3.H21}{H2(1)} hold,
and $\mK\subset\W\times\WW$ is
a positively $\Pi$-invariant compact set such that each one of its elements admits
a backward extension in $\mK$.
\end{hipos}
\begin{nota}\label{3.notapropK}
If Hypotheses \ref{3.hipos} hold, then
the semiflow $(\mK,\Pi,\R^+)$ is globally defined and continuous:
see Theorem \ref{3.flujocontinuo}(ix), and note that
we denote with the same symbol $\Pi$ the restriction $\Pi|_\mK$.
In addition, the existence of backward extension in $\mK$
of its elements ensures that $\mK\subset\mC_0$, where $\mC_0$ is defined by \eqref{3.defC0}.
\end{nota}
Such a set $\mK$ will be fixed once we have proved the next theorem.
It shows that any positively $\Pi$-invariant bounded set determines a
positively $\Pi$-invariant compact set; and it explains that each positively
$\Pi$-invariant compact set contains a maximal subset $\mK$ satisfying the conditions of
Hypotheses \ref{3.hipos}.
\begin{teor}\label{3.teorexisteM}
Suppose that conditions {\rm \hyperlink{3.H1}{H1}} and {\rm \hyperlink{3.H21}{H2(1)}} hold,
and let $\Pi$ be defined by~\eqref{3.defPi}.
\begin{itemize}
\item[\rm(i)] If $\mK_0\subset\W\times\WW$ is
a positively $\Pi$-invariant bounded set, then the set
\[
 \mK_1=\text{\rm closure\,}_{\W\times\WW}\{\Pi(2r,\w,x)\,|\;(\w,x)\in\mK_0\}
\]
is a positively $\Pi$-invariant compact set.
\item[\rm(ii)] Let $(\wit\w,\wit x)\in\W\times\WW$ have bounded semiorbit. Then its omega-limit
set $\mO(\wit\w,\wit x)$ is well-defined, positively $\Pi$-invariant, and compact. In addition,
any point $(\w,x)\in\mO(\wit\w,\wit x)$ admits at least a backward extension in
$\mO(\wit\w,\wit x)$.
\item[\rm(iii)] If $\mK_2$ is a positively $\Pi$-invariant compact set, then the set
\[
 \mK_3=\{(\w,x)\in\mK_2\,|\;(\w,x)\text{ admits a backward extension in }\mK_2\}
\]
is a nonempty positively $\Pi$-invariant compact set, and is the maximal
subset of $\mK_2$ with these properties.
\end{itemize}
\end{teor}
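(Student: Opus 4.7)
The most delicate point is the compactness claim of (i): since $\mK_0$ is only assumed bounded in $\W\times\WW$ (not compact there), relative compactness of $\Pi(2r,\mK_0)$ in $\W\times\WW$ cannot be read off of Theorem~\ref{3.flujocontinuo}(iii), which only treats a single orbit. The hard part of the plan is to produce equicontinuity of the derivatives $\dot u(2r,\w,x)$ \emph{uniformly} in $(\w,x)\in\mK_0$. Parts (ii) and (iii) will then proceed along classical topological-dynamics lines, leaning on the pseudo-continuity package of Theorem~\ref{3.flujocontinuo}.

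\textbf{Plan for (i).} Fix $M>0$ with $\mK_0\subset\W\times\{\n{x}_\WW\le M\}$. Positive $\Pi$-invariance yields $\n{u(t,\w,x)}_\WW\le M$ for every $(\w,x)\in\mK_0$ and $t\ge 0$, so Theorem~\ref{3.flujocontinuo}(iii) gives $\beta_{\w,x}=\infty$ and legitimises the definition of $\mK_1$. To prove relative compactness in $\W\times\WW$ of $\Pi(2r,\mK_0)$, I first apply Arzel\`a--Ascoli to the uniformly $M$-Lipschitz family $\{u(2r,\w,x)\}$ to extract $C$-convergent subsequences, and then upgrade to $\WW$ by showing uniform equicontinuity of the derivatives via the identity
\[
\dot u(2r,\w,x)(s)=F\bigl(\sigma(2r+s,\w),\,y(2r+s,\w,x),\,y(2r+s-\tau(\sigma(2r+s,\w),u(2r+s,\w,x)),\w,x)\bigr),
\]
combined with the uniform $M$-Lipschitz dependence of $y(\cdot,\w,x)$ and of $t\mapsto u(t,\w,x)\in C$, uniform continuity of $\sigma$ on the compact cylinder $\W\times[r,2r]$, uniform continuity of $\tau$ on the relatively compact (in $\W\times C$) set $\{(\sigma(t,\w),u(t,\w,x))\,|\,t\in[r,2r],\,(\w,x)\in\mK_0\}$, and uniform continuity of $F$ on a bounded Euclidean box. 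A second Arzel\`a--Ascoli application then yields $C^1$-convergence, hence $\WW$-convergence, so $\mK_1$ is compact. For positive $\Pi$-invariance of $\mK_1$, I note first that $\Pi(2r,\mK_0)$ is itself positively $\Pi$-invariant by the cocycle property and invariance of $\mK_0$; second, for any $(\w,x)\in\mK_1$, compactness of $\mK_1$ together with openness of $\mU$ (Theorem~\ref{3.flujocontinuo}(iv)) supplies a uniform $\epsilon>0$ with $\beta_{\w,x}>\epsilon$ on $\mK_1$, and Theorem~\ref{3.flujocontinuo}(vii) passes $\Pi(t,\w_n,x_n)\in\mK_1$ to the limit in $\W\times\WW$ for $t<\epsilon$; iterating in steps of size $\epsilon/2$ covers every $t\ge 0$.

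\textbf{Plan for (ii) and (iii).} For (ii), boundedness of the semiorbit and Theorem~\ref{3.flujocontinuo}(iii) give $\beta_{\wit\w,\wit x}=\infty$ and relative compactness of $\{\Pi(t,\wit\w,\wit x)\,|\,t\ge r\}$ in $\W\times\WW$, so $\mO(\wit\w,\wit x)$ is defined as in the continuous case, and is nonempty, compact, and positively $\Pi$-invariant (the last by the cocycle identity together with continuity of $\Pi|_{\mO(\wit\w,\wit x)}$ from Theorem~\ref{3.flujocontinuo}(ix)). For backward extensions of $(\w,x)\in\mO(\wit\w,\wit x)$, I pick $t_m\uparrow\infty$ with $\Pi(t_m,\wit\w,\wit x)\to(\w,x)$; for each $k\in\N$ the points $\Pi(t_m-k,\wit\w,\wit x)$ lie eventually in the relatively compact tail, so a diagonal extraction plus continuity of $\Pi|_{\mO(\wit\w,\wit x)}$ produces a continuous map $\theta_\w\colon\R^{-}\!\to\mO(\wit\w,\wit x)$ with $\theta_\w(0)=(\w,x)$ and $\Pi(t,\theta_\w(s))=\theta_\w(t+s)$ whenever $s\le-t\le 0$. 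For (iii), nonemptiness follows from (ii) applied to any point of $\mK_2$: its omega-limit set is contained in $\mK_2$ and in $\mK_3$. Positive $\Pi$-invariance: given $(\w,x)\in\mK_3$ with extension $\theta$ in $\mK_2$ and $t\ge 0$, I concatenate $s\mapsto\theta(t+s)$ for $s\le-t$ with $s\mapsto\Pi(t+s,\w,x)\in\mK_2$ for $-t\le s\le 0$ to obtain a backward extension of $\Pi(t,\w,x)$ in $\mK_2$. Closedness: if $(\w_n,x_n)\to(\w,x)$ in $\mK_2$ with extensions $\theta_n\colon\R^{-}\!\to\mK_2$, the identity $\theta_n(s')=\Pi(s'-s,\theta_n(s))$ and uniform continuity of the continuous map $\Pi\colon[0,k]\times\mK_2\to\mK_2$ (Theorem~\ref{3.flujocontinuo}(ix)) make $\{\theta_n\}$ equicontinuous on each $[-k,0]$; Arzel\`a--Ascoli in $C(\R^{-},\mK_2)$ with the compact-open topology extracts a subsequential limit $\theta$, and passing to the limit in the cocycle relation shows $\theta$ is a backward extension of $(\w,x)$ in $\mK_2$, so $(\w,x)\in\mK_3$. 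Maximality is immediate, since a backward extension inside any positively $\Pi$-invariant subset of $\mK_2$ is \emph{a fortiori} a backward extension in $\mK_2$.
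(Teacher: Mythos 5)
Your proposal is correct, and for parts (i) and (ii) it follows essentially the paper's route: the paper also proves compactness of $\mK_1$ by working on the window $[0,2r]$, extracting a $C([0,2r],\R^n)$-limit by Arzel\`a--Ascoli from the uniform bounds on $y_m$ and $\dot y_m$, and then upgrading to $\WW$-convergence of $u(2r,\w_k,x_k)$ by showing the derivatives converge on $[r,2r]$ via the integral identity and the uniform convergence of the delayed argument (your choice of $2r$ for exactly the reason that the delayed time then lands in $[0,2r]$ matches the paper); the only organizational difference is that the paper obtains convergence of the derivatives directly rather than by a second equicontinuity/extraction step, and it disposes of positive invariance of $\mK_1$ and of the backward extensions in (ii) by citing Theorem~\ref{3.flujocontinuo}(vii) and Proposition II.2.1 of Shen--Yi, where you reprove these by hand (correctly). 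In part (iii) your argument for closedness of $\mK_3$ genuinely differs from the paper's: the paper fixes limits $x_{-j}$ of $\theta_{\w_k,x_k}(-j)$ one step at a time, defines $\theta^{-j}_{\w,x}(s)=u(j+s,\w\pu(-j),x_{-j})$ on $[-j,-j+1]$, verifies the matching conditions using continuity of the time-one map, and concatenates; you instead observe that the whole family $\{\theta_n\}$ is equicontinuous on compact intervals (via $\theta_n(s')=\Pi(s'-s,\theta_n(s))$ and uniform continuity of the restricted continuous semiflow on $[0,k]\times\mK_2$) and extract a single limit in $C(\R^-,\mK_2)$ for the compact-open topology. Both are valid; your version is slightly more compact and avoids the bookkeeping of the iterative construction, while the paper's version makes explicit which point $x_{-1}$ the backward orbit passes through, which is reused in spirit elsewhere in the paper. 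No gaps.
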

\begin{proof}
(i) The positively $\Pi$-invariance of $\mK_1$ follows easily from
Theorem~\ref{3.flujocontinuo}(vii). Therefore,
it suffices to show that given any sequence $((\w_m,x_m))$ in $\mK_0$,
the sequence $(\Pi(2r,\w_m,x_m))=((\w_m\pu(2r),u(2r,\w_m,x_m)))$
admits a subsequence which converges to a point
$(\w^*,x^*)\in\W\times\WW$. We can assume without restriction that
there exists $\w=\lim_{n\to\infty}\w_m$, so that
$\w^*=\w\pu(2r)$.
We represent $y_m\colon[-r,2r]\to\R^n,\;t\mapsto y(t,\w_m,x_m)$ and
note that $(\w_m\pu t,(y_m)_t)\in\mK_0$ for all $t\in[0,2r]$.
Since $\mK_0$ is bounded in $\W\times\WW$,
the sequence $(y_m)$ is uniformly bounded in $C([-r,2r],\R^n)$.
In addition,
\begin{equation}\label{3.deriv1}
 \dot{y}_m(t)=F(\w_m\pu t,y_m(t),y_m(t-\tau(\w_m\pu t,(y_m)_{t})))
\end{equation}
for $t\in[0,2r]$. The bound of $\mK_0$ together with
\hyperlink{3.H1}{H1}, Theorem~\ref{3.flujocontinuo}(v) and \hyperlink{3.H21}{H2(1)},
ensures that the sequence $(\dot{y}_m)$ is also contained in
$C([0,2r],\R^n)$ and is uniformly bounded on $[0,2r]$. Therefore,
Arzel\'{a}--Ascoli theorem
provides a subsequence $(y_k)$ which converges uniformly on $[0,2r]$
to a function $y^*\in C([0,2r],\R^n)$. In addition, hypotheses \hyperlink{3.H21}{H2(1)}
ensures that the sequence $(t\mapsto \tau(\w_k\pu t,(y_k)_t))$ converges
to the function $t\to\tau(\w\pu t,(y^*)_t)$ uniformly on $[r,2r]$.
Therefore,
\begin{equation}\label{3.deriv2}
 \lim_{k\to\infty}F(\w_k\pu t, y_k(t),y_k(t-\tau(\w_k\pu t,(y_k)_{t})))
 =F(\w\pu t, y^*(t), y^*(t-\tau(\w\pu t,(y^*)_t)))
\end{equation}
uniformly on $[r,2r]$. In turn, this property ensures that
$(y_k(r)+\int_r^t F(\w_k\pu s, y_k(s),$ $
y(s-\tau(\w_k\pu s,(y_k)_s),\w_k,x_k))\,ds)$ (which, by \eqref{3.deriv1},
agrees with the sequence $(y_k(t))$)
converges to the point
$y^*(r)+\int_r^t F(\w\pu s, y^*(s), y^*(s-\tau(\w\pu s,(y^*)_s)))\,ds$
for $t\in[r,2r]$.
Since $\lim_{k\to\infty} y_k(t)=y^*(t)$
for $t\in[0,2r]$, we see that there exists
$\dot y^*(t)$ for $t\in[r,2r]$ and that it agrees with
$F(\wt,y^*(t),y^*(t-\tau(\wt,(y^*)_t))$. This fact together with
\eqref{3.deriv1} and \eqref{3.deriv2} shows that
$(\dot{y}_k)$ converges to $(\dot{y}^*)$ uniformly on $[r,2r]$.
Altogether, we see that the sequence $(u(2r,\w_k,x_k))$ converges to
$x^*:=y^*_{2r}$ in $W^{1,\infty}$, which proves (i).
\smallskip\par
(ii) Theorem~\ref{3.flujocontinuo}(iii) shows that the
classical definition of omega-limit set $\mO(\w,x)$ of a point $(\w,x)$  with bounded
$\Pi$-semiorbit makes sense. Since it agrees with the omega-limit
set of the point $\Pi(2r,\w,x)$, we can adapt the proof of (i) to show that
$\mO(\w,x)$ is compact. Its positively $\Pi$-invariance follows from
Theorem~\ref{3.flujocontinuo}(vii). Theorem~\ref{3.flujocontinuo}(ix)
ensures that the restricted semiflow $(\mO,\Pi,\R^+)$ is continuous, and hence
Proposition II.2.1 of~\cite{shyi} proves the last assertion in (ii).
\smallskip\par
(iii) It is clear that the set $\mK_3$ is a positively $\Pi$-invariant subset of $\mK_2$.
Since $\mK_2$ contains at least a minimal subset, point (ii) ensures that
the set $\mK_3$ is nonempty.
Therefore, since $\mK_2$ is compact, the goal is to check that $\mK_3$ is closed. Let us fix a
point $(\w,x)\in\text{closure\,}_{\W\times\WW}\mK_3$. We will follow an
iterative procedure. The first step is to find a point
$(\w\pu(-1),x_{-1})\in\text{closure\,}_{\W\times\WW}\mK_3$ and a continuous map
$\theta^{-1}_{\w,x}\colon[-1,0]\to\mK_2$ such that:
$\theta^{-1}_{\w,x}(-1)=x_{-1}$; $\theta^{-1}_{\w,x}(0)=x$; and
\[
 \Pi(t,\ws,\theta^{-1}_{\w,x}(s))=
 (\w\pu(t+s),\theta^{-1}_{\w,x}(s+t))
 \text{ whenever } -1\le s\le-t\le 0\,.
\]
To this end, we take a sequence $((\w_m,x_m))$ in $\mK_3$ with limit $(\w,x)$.
For each $m\in\N$ we choose a backward orbit of $(\w_m, x_m)$ in $\mK_2$,
which we write as $\{(\w_m\pu s,\theta_{\w_m,x_m}(s))\,|$ $\;s\le 0\}$.
It is clear that $(\ws,\theta_{\w_m,x_m}(s))\in\mK_3$ for any $s\le 0$: its
backward orbit
is in fact provided by the same map. In addition, $\theta_{\w_m,x_m}(0)=x_m$, and
\[
 \Pi(t,\w_m\pu s,\theta_{\w_m,x_m}(s))=
 (\w_m\pu(t+s),\theta_{\w_m,x_m}(s+t))
 \text{ whenever } s\le-t\le 0\,.
\]
The compactness of $\mK_2$ provides a subsequence
$(\w_k,x_k)$ of $(\w_m,x_m)$ such that there exists
$\lim_{k\to\infty}(\w_k\pu(-1),\theta_{\w_k,x_k}(-1))$. We call
this limit $(\w\pu(-1),x_{-1})$ and note that
$(\w\pu(-1),x_{-1})\in\text{closure\,}_{\W\times\WW}\mK_3$.
We define
\[
 \theta^{-1}_{\w,x}\colon[-1,0]\to \mK_2\,,\quad s\mapsto
 u(1+s,\w\pu(-1),x_{-1})\,,
\]
which satisfies the required conditions: the positively $\Pi$-invariance of
$\mK_2$ ensures that it is well defined; it is obvious
that $\theta^{-1}_{\w,x}(-1)=x_{-1}$; in addition,
\[
\begin{split}
 \theta^{-1}_{\w,x}(0)&=u(1,\w\pu(-1),x_{-1})
 =\lim_{k\to\infty}u(1,\w_k\pu(-1),\theta_{\w_k,x_k}(-1))\\
 &=\lim_{k\to\infty}\theta_{\w_k,x_k}(0)=\lim_{k\to\infty}x_k=x
\end{split}
\]
(here we use Theorem~\ref{3.flujocontinuo}(vii) or (ix));
and finally, if $-1\le s\le -t\le 0$, then
\[
\begin{split}
 \Pi(t,\ws,\theta^{-1}_{\w,x}(s))
 &=(\w\pu(t+s),u(t,\ws,u(1+s,\w\pu(-1),x_{-1})))\\
 &=(\w\pu(t+s),u(1+t+s,\w\pu(-1),x_{-1}))
 =(\w\pu(t+s),\theta^{-1}_{\w,x}(t+s))\,.
\end{split}
\]
This completes the first step.
\par
Now we iterate the process in order to obtain a
sequence $((\w\pu(-j),x_{-j}))$ of points in $\text{closure\,}_{\W\times\WW}\mK_3$
and a sequence of continuous functions $(\theta_{\w,x}^{-j}\colon[-j,-j+1]\to \mK_2)$
with $\theta^{-j}_{\w,x}(-j+1)=\theta^{-j+1}_{\w,x}(-j+1)=x_{-j+1}$ such that
if $-j\le s\le -t-j+1\le -j+1$, then
\[
\begin{split}
 \Pi(t,(\w\pu(-j))\pu s,\theta^{-j}_{\w,x}(s))
 =((\w\pu(-j))\pu(t+s),\theta^{-j}_{\w,x}(t+s))\,.
\end{split}
\]
It is not hard to deduce from these facts that the continuous map
$\theta_{\w,x}\colon\R^-\to \mK_2$ obtained by concatenating the previous maps
is a backward extension of $(\w,x)$ in $\mK_2$.
This completes the proof of the compactness of
$\mK_3$.
\par
The last assertion of (iii) is obvious.
\end{proof}
As said before, in the rest of the section we
fix a set $\mK$ satisfying Hypotheses~\ref{3.hipos} (see also Remark~\ref{3.notapropK}).
Let us define $L\colon\mK\to\Lin(\WW,\R^{n}),\,(\w,\bar x)\mapsto L(\w,\bar x)$ by
\begin{equation}\label{3.defL}
\begin{split}
L(\w,\bar x)\phi&:=D_2F(\w,\bar x(0),\bar x(-\tau(\w,\bar x)))\phi(0)\\
&\,\quad+D_3F(\w,\bar x(0),\bar x(-\tau(\w,\bar x)))\phi(-\tau(\w,\bar x))\\
&\,\quad-D_3F(\w,\bar x(0),\bar x(-\tau(\w,\bar x)))
\dot{\bar x}(-\tau(\w,\bar x))\pu D_2\tau(\w,\bar x)\phi\,,
\end{split}
\end{equation}
and associate to~\eqref{3.eq} the family of {\em linear variational equations}
\begin{equation}\label{3.ecvar}
\dot{z}(t)=L(\Pi(t,\w,\bar x))z_t\,,\quad t\ge 0
\end{equation}
for $(\w,\bar x)\in\mK$. Let us summarize the strategy of the remaining
part of this section.
The solutions of this family of linear FDEs (of time-dependent delay type)
will allow us to define two semiflows on two different
bundles with base $(\mK,\Pi,\R^+)$. More precisely, on $\mK\times\WW$ and
on $\mK\times C$. Corollary 4.3 of \cite{mano1} states that the first one
is pseudo-continuous and the second one continuous.
The assumptions made on $\mK$ ensure that the construction made
in Remark~\ref{2.notalifting} applies to both semiflows, despite the lack
of global continuity of the first one. In particular, it makes sense
to talk about the upper Lyapunov exponents of these two linear
skew-product semiflows for which $(\mK,\Pi,\R^+)$ is the base. It is also proved
in \cite{mano1} (see Theorem~\ref{3.teorlin} below)
that the first semiflow is that usually called the {\em linearized semiflow of\/} $\Pi$.
This means that the corresponding upper Lyapunov exponent (which can be defined
despite the possible noncontinuity of the semiflow)
is that which responds to the classical concept. But it also
turns out that the second upper Lyapunov exponent is often \lq\lq easier to handle".
Theorem~\ref{3.igualexp} solves the disjunctive: it shows that in fact these
two quantities agree.
\par
We will now describe the two mentioned semiflows.
First, for each $(\w,\bar x)\in\mK$ and $v\in\WW$, we denote by
$z(t,\w,\bar x, v)$ the solution of \eqref{3.ecvar} with initial condition
$v$ (that is, with $z(s,\w,\bar x,v)=v(s)$ for each $s\in[-r, 0]$),
which is defined for all $t\ge -r$ and is linear with respect to $v$.
In addition, the map
$\mK\to\Lin(\WW, \R^{n}),\;(\w,\bar x)\mapsto L(\w,\bar x)$ is continuous.
These properties allow us to define a global linear skew-product semiflow
on the set $\mK\times\WW$ by
\begin{equation}\label{3.weak-skewlin}
\begin{array}{rcl}
\Pi_L\colon\R^+\!\times\mK\times\WW&\to&\mK\times\WW\\
(t,\w,\bar x, v)&\mapsto&(\Pi(t,\w,\bar x), w(t,\w,\bar x, v))\,,
\end{array}
\end{equation}
where $w(t,\w,\bar x,v)(s)=z(t+s,\w,\bar x,v)$ for all $s\in[-r,0]$ and $t\ge 0$.
As said before, Corollary 4.3 of \cite{mano1} proves that
this semiflow is pseudo-continuous. In particular, for all
$(t,\w,\bar x)\in\R^+\!\times \mK$, the linear map
\begin{equation}\label{3.defpiL}
 \pi_L(t,\w,\bar x)\colon\WW\to\WW,\quad v\mapsto w(t,\w,\bar x, v)
\end{equation}
is continuous.
\par
There is a strong relation between the semiflows $\Pi$ and $\Pi_L$, as the next
result shows. It is proved in Theorem 4.4 of \cite{mano1}, in turn based on Theorems 2
and 4 of \cite{hart1}.
\begin{teor}\label{3.teorlin}
Suppose that {\rm \hyperlink{3.H1}{H1}} and {\rm \hyperlink{3.H2}{H2}(1)} hold.
Let us fix $(\w,x)\in\mC_0$. If $t\in[0,\beta_{\w,x})$, then there exists
\begin{equation}\label{4.limW}
\begin{split}
 u_x(t,\w,x)v&=\lim_{{\ep\to 0}}\frac{u(t,\w,x+\ep v)-u(t,\w,x)}{\ep} \quad\text{in $\WW$}\\
\end{split}
\end{equation}
uniformly in $v\in\overline\mB_1:=\{v\in\WW\,|\; \n{v}_{\WW}=1\}$. In addition,
$u_x(t,\w,x)v=w(t,\w,x,v)$.
\end{teor}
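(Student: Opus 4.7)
My plan is to show that the map $\ep\mapsto u(t,\w,x+\ep v)$ is differentiable at $\ep=0$ with derivative $w(t,\w,x,v)$, by comparing the solution $y^\ep(t):=y(t,\w,x+\ep v)$ with the candidate first-order approximation $y(t)+\ep z(t)$, where $y(t):=y(t,\w,x)$ and $z(t):=z(t,\w,x,v)$. The strategy is to bound the remainder
\[
 \zeta^\ep(t):=\frac{y^\ep(t)-y(t)}{\ep}-z(t)
\]
via a Gronwall-type estimate derived from an integral equation for $\zeta^\ep$, and then to upgrade $C$-convergence to $\WW$-convergence by a parallel argument on $\dot\zeta^\ep$.

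First I would subtract the equations defining $y^\ep$, $y$, and $z$, and perform a Taylor expansion in the arguments of $F$ around $(\w t,y(t),y(t-\tau(\wt,y_t)))$. The nontrivial term is
\[
 y^\ep(t-\tau(\wt,y^\ep_t))-y(t-\tau(\wt,y_t)),
\]
which I would split as $[y^\ep(t-\tau(\wt,y^\ep_t))-y(t-\tau(\wt,y^\ep_t))]+[y(t-\tau(\wt,y^\ep_t))-y(t-\tau(\wt,y_t))]$. The first bracket becomes $\ep\,z(t-\tau(\wt,y_t))+o(\ep)$ to leading order, while the second, by a first-order Taylor expansion in the delay variable together with \hyperlink{3.H21}{H2(1)}, produces $-\dot y(t-\tau(\wt,y_t))\,D_2\tau(\wt,y_t)(\ep\,w(t,\w,x,v))+o(\ep)$. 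Matching these terms with the definition of $L$ in \eqref{3.defL} shows precisely why the variational equation takes the form it does, and yields an integral inequality of the form
\[
 |\zeta^\ep(t)|\le \int_0^t K\bigl(\n{\zeta^\ep_s}_C+\rho(\ep)\bigr)\,ds
\]
for a modulus $\rho(\ep)\to 0$ depending on \hyperlink{3.H21}{H2(1)} and on uniform continuity of $D_iF$ and of $\dot y$ on compact sets. Gronwall then gives $\n{\zeta^\ep_t}_C\to0$, uniformly in $v$ on the closed unit ball of $\WW$ (since all ingredients—bounds on $y^\ep$, on $\dot y^\ep$, and the compactness of $\{(\w s,y_s)\,|\,s\in[0,t]\}$—are uniform on bounded sets in $\WW$). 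This is the first, and essentially routine, stage.

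The main obstacle is to promote this $C$-convergence to convergence in the $\WW$-norm, i.e.~to show that $\dot\zeta^\ep\to 0$ in $L^\infty$. Here the compatibility condition $(\w,x)\in\mC_0$ becomes crucial: at $t=0^+$, without compatibility the candidate derivative $z$ has an initial jump in $\dot z$ incompatible with $\dot y^\ep-\dot y$, and the $L^\infty$-limit cannot exist. Given $(\w,x)\in\mC_0$, the derivatives $\dot y$ and $\dot y^\ep$ are continuous on $[0,t]$ with matching initial values in the limit, and differentiating \eqref{3.dery} for $y^\ep$ and subtracting the analogous identities for $y$ and $z$ leads to an $L^\infty$-estimate on $\dot\zeta^\ep$ that again reduces, after the same Taylor expansion of the delay term and use of \hyperlink{3.H21}{H2(1)}, to a Gronwall-type bound driven by the already-controlled quantity $\n{\zeta^\ep_s}_C$ plus a remainder $\rho(\ep)$ that is uniform in $v\in\overline\mB_1$. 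The key technical points are: (a) the uniform continuity on compact sets of $t\mapsto\dot y(t-\tau(\wt,y_t))$, which follows from Theorem~\ref{3.flujocontinuo}(v)--(vi) applied on $[0,t]$; and (b) the uniform bound on $\n{y^\ep_t}_{\WW}$ for small $\ep$, which follows because the semiflow $\Pi$ is continuous into $\WW$ on $\wit\mU$ by Theorem~\ref{3.flujocontinuo}(vi) and the initial datum $x+\ep v$ varies continuously in $\WW$.

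Once $\n{\zeta^\ep_t}_\WW\to0$ uniformly in $v\in\overline\mB_1$ is established, the identity $u_x(t,\w,x)v=w(t,\w,x,v)$ is immediate from the definition \eqref{3.defpiL}, and the linearity and continuity of $u_x(t,\w,x)$ follow from the uniformity in $v$. The argument is a one-parameter variation-of-constants calculation; its delicate point is exclusively the bookkeeping of the state-dependent delay term, where the correction $-D_3F\cdot\dot{\bar x}(-\tau)\cdot D_2\tau$ appearing in \eqref{3.defL} arises precisely as the derivative of the composition $\ep\mapsto y(t-\tau(\wt,y^\ep_t))$ through $\tau$.
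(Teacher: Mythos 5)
The paper does not prove this theorem at all: it is quoted verbatim from Theorem 4.4 of \cite{mano1}, which in turn rests on Theorems 2 and 4 of \cite{hart1}, so there is no internal proof to compare against. Your sketch follows what is essentially Hartung's classical route — set $\zeta^\ep=(y^\ep-y)/\ep-z$, derive an integral inequality for $\n{\zeta^\ep_t}_C$ by Taylor-expanding $F$ and the state-dependent delay term, close it with Gronwall, and then upgrade to $\WW$ by the pointwise identity for $\dot\zeta^\ep$ on $[0,t]$ together with $\zeta^\ep\equiv 0$ on $[-r,0]$. That is the right architecture, and the decomposition of the delay term into the two brackets, with the second bracket generating the correction $-D_3F\cdot\dot y(-\tau)\cdot D_2\tau$ of \eqref{3.defL}, is exactly where the content lies.

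Two places in your sketch need tightening. First, the phrases \lq\lq the first bracket becomes $\ep z+o(\ep)$'' and \lq\lq produces $-\dot y\cdot D_2\tau(\ep\,w)+o(\ep)$'' are circular as written (they presuppose $y^\ep_t-y_t=\ep z_t+o(\ep)$); the correct bookkeeping is the exact identity $y^\ep_t-y_t=\ep(z_t+\zeta^\ep_t)$, with the $\zeta^\ep_t$ contributions absorbed into the Gronwall integrand — your displayed integral inequality is the right endpoint, but the derivation should be phrased noncircularly. Second, and more substantively: (a) the uniformity in $v\in\overline\mB_1$ of the existence of $y^\ep$ on all of $[0,t]$ for $|\ep|\le\ep_0$ does not follow from openness of $\mU$ plus Theorem~\ref{3.flujocontinuo}(vi)--(vii), because the unit ball of $\WW$ is not compact in $\WW$; you need an a priori Gronwall bound $\n{y^\ep_s-y_s}_C\le K\ep$ valid as long as the solution stays in a fixed bounded set, combined with Theorem~\ref{3.flujocontinuo}(iii), to rule out blow-up uniformly in $v$. (b) The compatibility condition is not only needed for the $L^\infty$ upgrade: the mean-value expansion of $y(t-\tau(\wt,y^\ep_t))-y(t-\tau(\wt,y_t))$ already requires $\dot y$ to be continuous on all of $[-r,\beta_{\w,x})$ (i.e.\ $x\in C^1$ with no jump at $0$), uniformly on compacts, and this is used in your first, $C$-norm, stage — compare the role of $\bar y\in C^1([-r,T],\R^n)$ in Lemma~\ref{4.lem1}. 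Your attribution of the jump obstruction to $\dot z$ at $t=0^+$ misplaces the difficulty, although since you assume $(\w,x)\in\mC_0$ throughout, this is an imprecision of explanation rather than a logical gap.
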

The definition of the second semiflow is now given. For each $(\w,\bar x)\in \mK$ and
$v\in C$, let $\wit{z}(t,\w,\bar x, v)$ denote the solution of \eqref{3.ecvar} with
initial condition $v$. Corollary 4.3 of \cite{mano1}
proves that the solutions of \eqref{3.ecvar} induce a global continuous
linear skew-product semiflow on the set $\mK\times C$, defined by
\begin{equation}\label{3.tildeskew}
\begin{array}{rcl}
\wit\Pi_L\colon\R^+\!\times \mK\times C&\to&\mK\times C\\
(t,\w,\bar x, v)&\mapsto&(\Pi(t,\w,\bar x),\wit w(t,\w,\bar x, v))\,,
\end{array}
\end{equation}
where $\wit w(t,\w,\bar x, v)(s)=\wit z(t+s,\w,\bar x,v)$ for all $s\in[-r,0]$. We represent
\begin{equation}\label{3.defwpiL}
 \wit \pi_L(t,\w,\bar x)\colon C\to C\,,\quad v\mapsto \wit w(t,\w,\bar x, v)\,,
\end{equation}
which is a linear continuous map for all $(t,\w,\bar x)\in\R^+\!\times \mK$. Note that
\begin{equation}\label{3.proppiL}
 \wit \Pi_L(t,\w,\bar x,v)=(\Pi(t,\w,\bar x),\wit\pi_L(t,\w,\bar x)v)\,.
\end{equation}
\par
It is easy to deduce from the fact that $\wit z(t,\w,\bar x,v)$ solves~\eqref{3.ecvar},
from Hypotheses~\ref{3.hipos} and from
the expression of $L(\Pi(t,\w,\bar x))$ obtained from \eqref{3.defL} that,
if $(\w,\bar x)\in \mK$ and $v\in C$, then
$\wit w(t,\w,\bar x, v)\in\WW$ for $t\geq r$.
This means that the map $\wit\pi_L(t,\w,\bar x)$, defined on $C$ by \eqref{3.defwpiL},
takes values in $\WW$ for $t\ge r$. The next goal is to check that this map is
continuous when it is defined from $C$ to $\WW$. This property will be used in
the proof of Theorem~\ref{3.igualexp}.
\begin{prop}\label{3.propcompactflow}
Suppose that Hypotheses~\ref{3.hipos} hold. Given $(\w,\bar x)\in \mK$ and $T\geq r$, we define
\begin{equation}\label{3.defwhp}
\begin{array}{rcl}
\widehat\pi_L(T,\w,\bar x)\colon C&\to&\WW\\
v&\mapsto&\wit\pi_L(T,\w,\bar x)v\,,
\end{array}
\end{equation}
where $\wit\pi_L(T,\w,\bar x)$ is given by~\eqref{3.defwpiL}. Then
the map $\widehat\pi_L(T,\w,\bar x)$ is well-defined and continuous. If, in addition,
$T\geq 2r$, then the map $\widehat\pi_L(T,\w,\bar x)$ is compact.
Finally, if $T\ge 2r$, then the map $\pi_L(T,\w,\bar x)\colon\WW\to\WW$
given by~\eqref{3.defpiL} is compact.
\end{prop}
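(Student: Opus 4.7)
The plan is to treat the three assertions in sequence, leveraging the linearity of $\wit z(t, \w, \bar x, v)$ in $v$, the uniform boundedness of $L$ on $\mK$, and the compact embedding $\WW \hookrightarrow C$ supplied by Arzel\'a--Ascoli.

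For well-definedness and continuity of $\widehat\pi_L(T, \w, \bar x)\colon C \to \WW$ when $T \geq r$, since the map is linear it suffices to produce a bound $\|\widehat\pi_L(T, \w, \bar x) v\|_\WW \leq K \|v\|_C$. Viewing $L$ as taking values in $\Lin(C, \R^n)$ (each term in \eqref{3.defL} extending to a bounded functional on $C$, since $D_2\tau(\w',\bar x')\in\Lin(C,\R)$), the quantity $M := \sup_{(\w', \bar x') \in \mK} \|L(\w', \bar x')\|_{\Lin(C, \R^n)}$ is finite: continuity of $L$ on the compact set $\mK$, together with the fact that the $\WW$-topology on $\mK$ controls $\|\dot{\bar x}'\|_{L^\infty}$ uniformly, supplies the required estimates. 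Gronwall's inequality applied to the integral form of \eqref{3.ecvar} gives $\|\wit z_t\|_C \leq e^{Mt} \|v\|_C$ for $t \geq 0$, and substituting back into the equation yields $|\dot{\wit z}(t)| \leq M e^{Mt} \|v\|_C$. Since $T \geq r$ implies $[T - r, T] \subset [0, \infty)$, both components of the Lipschitz norm of $\widehat\pi_L(T, \w, \bar x) v$ are controlled, giving the bound.

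For the compactness when $T \geq 2r$, invoke the cocycle property of $\wit\Pi_L$ (see \eqref{3.tildeskew}) to write
\[
\widehat\pi_L(T, \w, \bar x) = \widehat\pi_L(T - r, \Pi(r, \w, \bar x)) \circ i \circ \widehat\pi_L(r, \w, \bar x),
\]
where the outer factors are continuous maps $C \to \WW$ by the previous paragraph (both running times are $\geq r$, using $T - r \geq r$ and that $\Pi(r,\w,\bar x)\in\mK$ by positive invariance), and $i\colon \WW \hookrightarrow C$ is the inclusion. The embedding $i$ is compact by Arzel\'a--Ascoli, as recalled at the end of Section~\ref{sec2}, so the composition $\widehat\pi_L(T, \w, \bar x)$ is compact (compact operators form a two-sided ideal).

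For $\pi_L(T, \w, \bar x)\colon \WW \to \WW$ with $T \geq 2r$, uniqueness of solutions of the linear Cauchy problem \eqref{3.ecvar} gives $w(T, \w, \bar x, v) = \wit w(T, \w, \bar x, v)$ for $v \in \WW \subset C$, so $\pi_L(T, \w, \bar x) = \widehat\pi_L(T, \w, \bar x) \circ j$ with $j\colon \WW \hookrightarrow C$ the continuous inclusion; compactness of $\widehat\pi_L(T, \w, \bar x)$ settles the claim. The main technical care is needed in justifying that $L$ extends to a uniformly bounded map into $\Lin(C, \R^n)$ on $\mK$: in particular, giving meaning to $\dot{\bar x}'(-\tau(\w', \bar x'))$ in the third term of \eqref{3.defL} and bounding it uniformly, which is precisely where the compactness of $\mK$ in the $\WW$-topology (rather than only in the $C$-topology) is essential.
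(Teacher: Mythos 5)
Your proof is correct and follows essentially the same route as the paper's: both establish an a priori bound $\n{\widehat\pi_L(r,\w,\bar x)v}_{\WW}\le K\n{v}_C$ from the variational equation (you via Gronwall, the paper via the uniform bound on $\wit\pi_L$ supplied by the continuity of $\wit\Pi_L$ and the compactness of $\mK$), and both obtain compactness by factoring through the compact embedding $\WW\hookrightarrow C$ at an intermediate time (you via the ideal property of compact operators, the paper by an explicit Arzel\'a--Ascoli subsequence extraction). The reduction of the compactness of $\pi_L(T,\w,\bar x)$ to that of $\widehat\pi_L(T,\w,\bar x)$ is likewise identical.
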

\begin{proof}
It has already been said that, if $v\in C$ and $T\geq r$, then
$\wit\pi_L(T,\w,\bar x)v\in\WW$, and hence the map $\widehat\pi_L(T,\w,\bar x)$
is well defined. It follows from this property and from \eqref{3.proppiL} that
$\widehat\pi_L(T,\w,\bar x)=\wit \pi_L(T-r, \Pi(r,\w,\bar x))\circ\widehat\pi_L(r,\w,\bar x)
=\pi_L(T-r, \Pi(r,\w,\bar x))\circ\widehat\pi_L(r,\w,\bar x)$, where $\pi_L$ is defined
by \eqref{3.defpiL}.
Thus, since $\pi_{L}(T-r, \Pi(r, \w,\bar x))\colon\WW\to\WW$
is continuous (see \eqref{3.defpiL}), in order to prove that
$\widehat\pi_L(T,\w,\bar x)\colon C\to\WW$ is continuous
it suffices to prove that $\widehat\pi_L(r,\w,\bar x)\colon C\to\WW$
is continuous.
\par
Take $v\in C$, so that $\widehat\pi_L(r,\w,\bar x)v=\wit w(r,\w,\bar x, v)=
\wit\pi_L(r,\w,\bar x)v$ and
\begin{equation}\label{3.derivz}
\begin{split}
 \dot{\wit z}(r+s,\w,\bar x,v)
 &
 =L(\Pi(r+s,\w,\bar x))\wit w(r+s,\w,\bar x,v)\\
 &=L(\Pi(r+s,\w,\bar x))(\wit\pi_L(r+s,\w,\bar x)v)
\end{split}
\end{equation}
for all $s\in[-r,0]$. Then,
\begin{equation}\label{3.hatpisubLnorma}
\begin{split}
 \n{\widehat\pi_L(r,\w,\bar x)v}_{\WW}&
 \le\n{\wit\pi_L(r,\w,\bar x)v}_C+\n{\dot{\wit w}(r,\w,\bar x,v)}_{L^\infty}\\
 &\le \wit C_r\,\n{v}_C+C_0\,\wit C_r\,\n{v}_C=\wit C_r\,(1+C_0)\,\n{v}_C\,,
\end{split}
\end{equation}
where
\begin{equation}\label{3.defC0Cr}
\begin{split}
 C_0&:=\!\sup_{(\w,\bar x)\in \mK}\!\n{L(\w,\bar x)}_{\Lin(C,\R^{n})}\,,\\
 \wit{C}_{r}&:=\!\!\sup_{\begin{smallmatrix}s\in[-r,0]\\(\w,\bar x)\in \mK\end{smallmatrix}}\!
 \n{\wit\pi_L(r+s,\w,\bar x)}_{\Lin(C, C)}\,.
\end{split}
\end{equation}
This proves the continuity.
\par
In order to prove the second assertion of the proposition, take $T\geq 2r$ and write, as before,
$\widehat\pi_L(T,\w,\bar x)=
\pi_L(T-2r, \Pi(2r,\w,\bar x))\circ\widehat\pi_L(2r,\w,\bar x)$. Since the map
$\pi_L(T-2r, \Pi(2r,\w,\bar x))\colon\WW\to\WW$ is continuous,
it suffices to prove that $\widehat\pi_L(2r,\w,\bar x)\colon C\to\WW$
is compact.
\par
Let us take a bounded sequence $(v_m)_{m\in\N}$ in C. It follows from \eqref{3.hatpisubLnorma}
that the sequence $(\n{\widehat\pi_L(r,\w,\bar x)v_m}_{\WW})_{m\in\N}$ is bounded.
Arzel\'{a}-Ascoli theorem provides a subsequence $(v_k)$ of $(v_m)$ such that
$(\wit\pi_L(r,\w,\bar x)v_k)$
converges to a function $\wit{v}\in C$. Hence, the sequence $(\widehat\pi_L(2r,\w,\bar x)v_k)
=\big((\widehat\pi_L(r,\Pi(r,\w,\bar x))\circ\wit\pi_L(r,\w,\bar x))v_k\big)$ converges to
$\widehat\pi_L(r,\Pi(r,\w,\bar x))\wit{v}\in\WW$. This shows the compactness of
$\widehat\pi_L(T,\w,\bar x)$ and hence of $\widehat\pi_L(T,\w,\bar x)$.
\par
The last assertion of the proposition is an immediate consequence of the previous
one and of the fact that any bounded sequence in $\WW$ determines a bounded
sequence in $C$. This completes the proof.
\end{proof}
For further purposes, we point out that
Proposition~\ref{3.propcompactflow} allows us to assert that
\begin{equation}\label{3.Cr2}
\widehat{C}_{r}:=\sup_{(\w,\bar x)\in \mK}\n{\widehat\pi_L(r,\w,\bar x)}_{\Lin(C,\WW)}
\end{equation}
is finite, where $\widehat\pi_L$ is defined by~\eqref{3.defwhp}. In fact, it follows from
\eqref{3.hatpisubLnorma} that $\widehat{C}_{r}\le\wit{C}_{r}\,(1+C_0)$, with $C_0$
and $\wit C_r$ given by~\eqref{3.defC0Cr}.
\par
Despite the possible lack of continuity of the semiflow $\Pi_L$
defined by \eqref{3.weak-skewlin},
Definition \ref{2.defLyap} provides two well-defined values, which we call
and denote
{\em upper Lyapunov exponent $\lambda^+_{s}(\w,\bar x)$ of $(\w,\bar x)\in\mK$ for
$(\mK,\Pi_L,\R^+)$} and
{\em upper Lyapunov exponent $\lambda_\mK$ of the semiflow $(\mK,\Pi_L,\R^+)$}.
We also denote by
$\wit{\lambda}^+_{s}(\w,\bar x)$ the upper Lyapunov exponent of $\wit\Pi_L$
(given by \eqref{3.tildeskew}) for $(\w,\bar x)\in \mK$; and by
$\wit{\lambda}_{\mK}$ the upper Lyapunov exponent of $(\mK,\wit\Pi_L,\R^+)$.
\begin{nota}
Despite the lack of classic continuity of the semiflow $\Pi_L$,
we can repeat the arguments of Theorem 4.2 of \cite{chle1} in order to check that,
if $(\w,\bar x)\in\mK$, then
\begin{equation}\label{2.lyap*}
 \lambda^+_{s}(\w,\bar x)=
 \limsup_{t\to\infty}\frac{1}{t}\:\ln\n{\pi_L(t,\w,\bar x)}_{\Lin(\WW,\WW)}\,.
\end{equation}
\end{nota}
The next theorem shows that the upper Lyapunov exponents of both semiflows coincide.
\begin{teor}\label{3.igualexp}
Suppose that Hypotheses~\ref{3.hipos} hold.
Let $\pi_L(t,\w,\bar x)$ and $\wit\pi_L(t,\w,\bar x)$ be defined by
\eqref{3.weak-skewlin} and \eqref{3.tildeskew} for $t\in\R$ and $(\w,\bar x)\in \mK$, and
let $C_0$ and $\widehat C_r$ be defined by~\eqref{3.defC0Cr} and
\eqref{3.Cr2}. And define $\lambda_{s}^+(\w,\bar x)$, $\lambda_\mK$,
$\wit{\lambda}_{s}^+(\w,\bar x)$ and $\wit\lambda_\mK$ as in the preceding paragraph.
The following statements hold:
\begin{itemize}
  \item[(i)] If $t\geq r$, then
  \[
  \qquad\n{\pi_L(t,\w,\bar x)}_{\Lin(\WW,\WW)}\le(1+C_0)
  \sup_{s\in[-r, 0]}\n{\wit\pi_L(t+s,\w,\bar x)}_{\Lin(C, C)}
  \]
  for every $(\w,\bar x)\in \mK$.
  \item[(ii)] $\lambda_{s}^+(\w,\bar x)\le \wit{\lambda}_{s}^+(\w,\bar x)$
  for every $(\w,\bar x)\in \mK$.
  \item[(iii)] If $t\geq r$, then
  \[
  \n{\wit\pi_L(t,\w,\bar x)}_{\Lin(C, C)}\le\widehat{C}_{r}\,
  \n{\pi_L(t-r, \Pi(r,\w,\bar x))}_{\Lin(\WW,\WW)}
  \]
  for every $(\w,\bar x)\in \mK$.
  \item[(iv)] $\wit{\lambda}_{s}^+(\w,\bar x)\le\lambda_{s}^+(\Pi(r,\w,\bar x))$
  for every $(\w,\bar x)\in \mK$.
  \item[(v)] $\lambda_{\mK}=\wit{\lambda}_{\mK}$.
\end{itemize}
\end{teor}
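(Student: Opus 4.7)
The plan is to treat (i) and (iii) as the two main norm-comparison estimates linking the pseudo-continuous semiflow $\Pi_L$ to the continuous semiflow $\wit\Pi_L$, then to deduce (ii) and (iv) by taking $\limsup_{t\to\infty}(1/t)\ln$ of both inequalities, and finally to combine (ii) and (iv) to obtain (v). The central technical facts to exploit are the uniqueness of solutions of the linear variational equation \eqref{3.ecvar} (which forces $z$ and $\wit z$ to coincide whenever the initial datum lies in $\WW\subset C$), the pointwise bound $|\dot z(\tau,\w,\bar x,v)|\le C_0\,\|z_\tau\|_C$ coming directly from~\eqref{3.ecvar} and \eqref{3.defC0Cr}, and the smoothing property in Proposition~\ref{3.propcompactflow} via the constant $\widehat C_r$.

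For (i), I fix $t\ge r$ and $v\in\WW$ with $\|v\|_\WW=1$. The identification $z(t+s,\w,\bar x,v)=\wit z(t+s,\w,\bar x,v)$ for $v\in\WW$ gives
\[
 |z(t+s,\w,\bar x,v)|=|\wit w(t+s,\w,\bar x,v)(0)|\le\n{\wit\pi_L(t+s,\w,\bar x)}_{\Lin(C,C)}\n{v}_C
\]
for $s\in[-r,0]$, which controls $\n{w(t,\w,\bar x,v)}_C$. For the $L^\infty$ norm of $\dot w(t,\w,\bar x,v)(s)=\dot z(t+s,\w,\bar x,v)$, the variational equation yields, at every $\tau=t+s\ge 0$,
\[
 |\dot z(\tau,\w,\bar x,v)|\le C_0\,\n{z_\tau}_C=C_0\,\n{\wit w(\tau,\w,\bar x,v)}_C\le C_0\,\n{\wit\pi_L(\tau,\w,\bar x)}_{\Lin(C,C)}\n{v}_C\,.
\]
Combining the two pieces with $\n{v}_C\le\n{v}_\WW=1$ and taking the essential supremum over $s\in[-r,0]$ produces the factor $(1+C_0)$ claimed in~(i).

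For (iii), I use the cocycle property to write, for $t\ge r$,
\[
 \wit\pi_L(t,\w,\bar x)=\wit\pi_L(t-r,\Pi(r,\w,\bar x))\circ\wit\pi_L(r,\w,\bar x)\,,
\]
and observe that, by Proposition~\ref{3.propcompactflow}, $\wit\pi_L(r,\w,\bar x)$ maps $C$ into $\WW$ and coincides with $\widehat\pi_L(r,\w,\bar x)$ when viewed with codomain $\WW$. Uniqueness of solutions of \eqref{3.ecvar} then identifies the subsequent evolution $\wit\pi_L(t-r,\Pi(r,\w,\bar x))$ applied to this $\WW$-valued element with $\pi_L(t-r,\Pi(r,\w,\bar x))$. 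Using the embedding $\n{\pu}_C\le\n{\pu}_\WW$ and the definition \eqref{3.Cr2} of $\widehat C_r$ gives
\[
 \n{\wit\pi_L(t,\w,\bar x)v}_C\le\n{\pi_L(t-r,\Pi(r,\w,\bar x))}_{\Lin(\WW,\WW)}\,\widehat C_r\,\n{v}_C\,,
\]
which is precisely (iii). The main obstacle I anticipate lies in this step: one must carefully justify, through the uniqueness of solutions of the variational equation and the smoothing of Proposition~\ref{3.propcompactflow}, that after passing from $C$ into $\WW$ via $\widehat\pi_L(r,\w,\bar x)$, the further action of $\wit\pi_L(t-r,\Pi(r,\w,\bar x))$ agrees with that of $\pi_L(t-r,\Pi(r,\w,\bar x))$.

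Finally, (ii) and (iv) follow by taking $\limsup_{t\to\infty}(1/t)\ln$ of the inequalities in (i) and (iii), invoking \eqref{2.lyap*} and its analogue for $\wit\Pi_L$; the multiplicative constants $(1+C_0)$ and $\widehat C_r$ disappear in the limit, as does the fixed time-shift by $r$ in $\Pi(r,\w,\bar x)$. Statement (v) is then a direct consequence: (ii) yields $\lambda_\mK\le\wit\lambda_\mK$ by taking the supremum over $\mK$, while (iv), together with the positive $\Pi$-invariance of $\mK$ (so that $\Pi(r,\w,\bar x)\in\mK$ and $\lambda_s^+(\Pi(r,\w,\bar x))\le\lambda_\mK$), gives $\wit\lambda_s^+(\w,\bar x)\le\lambda_\mK$ for every $(\w,\bar x)\in\mK$, whence $\wit\lambda_\mK\le\lambda_\mK$.
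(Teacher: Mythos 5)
Your proposal is correct and follows essentially the same route as the paper: part (i) via the identification $z=\wit z$ on $\WW$-data plus the bound $|\dot z(t+s)|\le C_0\,\n{\wit\pi_L(t+s,\w,\bar x)v}_C$ from the variational equation, part (iii) via the factorization $\widehat\pi_L(t,\w,\bar x)=\pi_L(t-r,\Pi(r,\w,\bar x))\circ\widehat\pi_L(r,\w,\bar x)$ already used in Proposition~\ref{3.propcompactflow}, and (ii), (iv), (v) by taking $\limsup_{t\to\infty}\frac1t\ln$ and using the $\Pi$-invariance of $\mK$. No gaps.
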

\begin{proof}
We take $t\geq r$, $(\w,\bar x)\in \mK$ and $v\in\WW\subset C$, and note that
\[
\n{\pi_L(t,\w,\bar x)v}_{C}=\n{\wit\pi_L(t,\w,\bar x)v}_{C}\le
\n{\wit\pi_L(t,\w,\bar x)}_{\Lin(C, C)}\,\n{v}_{C}\,.
\]
In addition, if $s\in[-r,0]$, we have
\[
 \dot z(t+s,\w,\bar x,v)=L(\Pi(t+s,\w,\bar x))(\wit\pi_L(t+s,\w,\bar x)v)\,:
\]
see~\eqref{3.derivz}, which is valid for $t$ instead of $r$, and recall that
$\wit z(t,\w,\bar x,v)=z(t,\w,\bar x,v)$, since $v\in\WW$.
Hence,
$|(d/ds)\,w(t,\w,\bar x,v)(s)|\le C_0\,\n{\wit\pi_L(t+s,\w,\bar x)}_{\Lin(C, C)}\,\n{v}_{C}$
for $s\in[-r,0]$. Thus, since $\n{v}_C\le\n{v}_{\WW}$,
\[
 \n{\pi_L(t,\w,\bar x)v}_{\WW}\le(1+C_0)
 \sup_{s\in[-r,0]}\n{\wit\pi_L(t+s,\w,\bar x)}_{\Lin(C, C)}\,\n{v}_{\WW}\,,
\]
and this proves (i). Now, (ii) is an easy consequence of the equalities
\eqref{2.lyap} and \eqref{2.lyap*}
for $\wit\lambda_s^+(\w,\bar x)$ and $\lambda_s^+(\w,\bar x)$.
\par
Now we take $t\geq r$, $(\w,\bar x)\in \mK$ and $v\in C$. Then,
\[
\begin{split}
\n{\wit\pi_L(t,\w,\bar x)v}_{C}&\le\n{\widehat\pi_L(t,\w,\bar x)v}_{\WW}\\
&=\n{\pi_L(t-r, \Pi(r,\w,\bar x))(\widehat\pi_L(r,\w,\bar x)v)}_{\WW}\\
&\le\n{\pi_L(t-r, \Pi(r,\w,\bar x))}_{\Lin(\WW,\WW)}\,
\n{\widehat\pi_L(r,\w,\bar x)v}_{\WW}\\
&\le\n{\pi_L(t-r,\Pi(r,\w,\bar x))}_{\Lin(\WW,\WW)}\,\widehat{C}_{r}\,\n{v}_{C}\,.
\end{split}
\]
This proves (iii). Property (iv) is an immediate consequence,
and (v) follows from (ii) and (iv).
\end{proof}
\begin{defi}\label{3.defly}
Suppose that Hypotheses~\ref{3.hipos} hold.
The {\em upper Lyapunov exponent of the set $\mK$
for the semiflow $(\mK,\Pi,\R^+)$}
is $\lambda_\mK=\wit\lambda_\mK$.
\end{defi}
\section{Exponential stability of invariant compact sets}\label{sec4}
The structure of Sections~\ref{sec4} and \ref{sec5} is similar: we establish conditions
characterizing the exponential stability of the positively invariant compact
subsets of the pseudo-continuous
semiflow $\Pi$ defined on $\W\times\WW$ by \eqref{3.defPi} in terms of the
corresponding upper Lyapunov exponents.
The hypotheses assumed in this section are more restrictive than those of the
next one, and they allow us to obtain stronger conclusions: compare the statements
of Theorems~\ref{4.teo} and~\ref{5.teo}.
Although part of the hypotheses are common, we write down now the whole list for
the reader's convenience. Recall that $(\W,\sigma,\R)$ is a continuous flow on a
compact metric space, with $\wt=\sigma(t,\w)$.
\begin{itemize}
\item[H1~~] \hypertarget{4.H1}
  $F\colon\W\times\R^{n}\times\R^{n}\to\R^{n}$ is continuous,
  and its partial derivatives with respect to the second and third arguments
  exist and are continuous on $\W\times\R^{n}\times\R^{n}$.
  In particular,
  the functions $D_iF\colon \W\times\R^{n}\times\R^{n}\to\Lin(\R^n,\R^n)$
  exist and are continuous for $i=2,3$.
\item[H2$^*$]
   \begin{itemize}
   \item[(1)] \hypertarget{4.H2*1}
   $\tau\colon\W\times C\to[0,r]$ is continuous and differentiable
   in the second argument, with $D_2\tau\colon\W\times C\to\Lin(C, \R)$ continuous.
   \item[(2)] \hypertarget{4.H2*2}
   $\tau$ is locally Lipschitz-continuous in this sense: for every
   bounded and closed subset $\mB\subset C$
   there exists a constant $L_1=L_1(\mB)>0$ such that
       \[
       \qquad |\tau(\w,x_1)-\tau(\w,x_2)|\le L_1\n{x_1-x_2}_{C}
       \]
        for all $\w\in\W$ and $x_1$, $x_2\in \mB$.
   \item[(3)] \hypertarget{4.H2*3}
   $D_2\tau$ is locally Lipschitz-continuous in the following sense:
   for every bounded and closed subset $\mB\subset C$ there exists a constant
   $L_2=L_2(\mB)>0$ such that
       \[
       \qquad \n{D_2\tau(\w,x_1)-D_2\tau(\w,x_2)}_{\Lin(C, \R)}
       \le L_2\n{x_1-x_2}_{C}
       \]
       for all $\w\in\W$ and $x_1$, $x_2\in \mB$.
   \end{itemize}
\end{itemize}
\par
Let $\Pi$ be defined by~\eqref{3.defPi} from the family \eqref{3.eq} of FDEs.
Throughout this section, we will work under
\begin{hipos}\label{4.hipos}
Conditions \hyperlink{4.H1}{H1} and \hyperlink{4.H2*1}{H2$^*$} hold,
and $\mK\subset\W\times\WW$ is
a positively $\Pi$-invariant compact set projecting over the whole base and
such that each one of its elements admits at least a backward extension in $\mK$.
\end{hipos}
Recall that the semiflow $(\mK,\Pi,\R^+)$ is global and continuous, and
that $\mK\subset\mC_0$: see Remark~\ref{3.notapropK}.
The goal of this section is to prove that the exponential stability of $\mK$ can be characterized
in terms of its upper Lyapunov exponent by the condition $\lambda_{\mK}<0$.
We will also show that the property of the exponential stability can be formulated
either in terms of the $\WW$-norm or of the $C$-norm.
These two results are stated in
the following theorem, whose proof requires three preliminary technical lemmas.
Recall Definition \ref{3.defly} of $\lambda_\mK$,
and that Theorem~\ref{3.igualexp} shows that
it is the upper Lyapunov exponent both form
the linearized semiflow given
by~\eqref{3.weak-skewlin} on $\mK\times\WW$ and by
\eqref{3.tildeskew} on $\mK\times C$; in fact, both definitions of $\lambda_\mK$
will be used in the proof.
It is interesting to remark that part of this result and the corresponding proof could be somehow
standard if the semiflow $\Pi$ were $C^1$ on an open neighborhood of $\mK$.
But the assumptions of this paper do not allow us to deduce this condition.
\begin{teor}\label{4.teo}
Suppose that Hypotheses~\ref{4.hipos} hold, and let $\lambda_\mK$ be
given by Definition \ref{3.defly}. The following statements are equivalent:
\begin{itemize}
  \item[(1)] $\lambda_{\mK}<0$.
  \item[(2)] There exist $\beta>0$, $k_1\geq1$, and $\delta_1>0$ such that, if
  $(\w,\bar x)\in \mK$ and $(\w,x)\in\W\times\WW$ satisfy
  $\n{x-\bar x}_{C}\le\delta_1$, then the function $y(t,\w,x)$ is defined for
  $t\in[-r,\infty)$ and
  \begin{equation}\label{4.desyteor}
      |y(t,\w, x)-y(t,\w,\bar x)|\le k_1e^{-\beta t}\,\n{x-\bar x}_{C}
      \quad\text{for all $t\geq -r$}\,,
  \end{equation}
  so that
  \begin{equation}\label{4.desuteor}
      \n{u(t,\w, x)-u(t,\w,\bar x)}_C\le k_1e^{\beta r}e^{-\beta t}\,
      \n{x-\bar x}_{C}\quad\text{for  all $t\geq 0$}\,.
  \end{equation}
  \item[(3)] The set $\mK$ is exponentially stable; i.e., there exist $\beta>0$, $k_2\geq1$,
  and $\delta_2>0$ such that, if $(\w,\bar x)\in \mK$ and $(\w,x)\in\W\times\WW$
  satisfy $\n{x-\bar x}_{\WW}<\delta_2$, then the function $u(t,\w, x)$
  is defined for  $t\in[0,\infty)$, and
      \[
      \quad \n{u(t,\w, x)-u(t,\w,\bar x)}_{\WW}\le k_2\,e^{-\beta t}\,
      \n{x-\bar x}_{\WW}\quad\text{for all $t\geq0$}\,.
      \]
\end{itemize}
In addition, if {\rm(i) holds}, we can take any $\beta\in(0,-\lambda_\mK)$ in {\rm(2)} and
{\rm(3)} (by changing the constants $\delta_1, k_1,\delta_2$ and $k_2$ if required).
\end{teor}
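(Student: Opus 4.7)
The plan is to establish the cycle $(3)\Rightarrow(2)\Rightarrow(1)\Rightarrow(2)$ together with the bootstrap $(2)\Rightarrow(3)$. Since $\n{\,\cdot\,}_C\le\n{\,\cdot\,}_\WW$, the implication $(3)\Rightarrow(2)$ is immediate, so only three nontrivial steps remain.

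For $(2)\Rightarrow(1)$, I would exploit the fact that, by Theorem~\ref{3.teorlin}, the variational solution $z(t,\w,\bar x,v)$ is the pointwise limit of $(y(t,\w,\bar x+\ep v)-y(t,\w,\bar x))/\ep$ as $\ep\to 0^+$. Picking any $(\w,\bar x)\in\mK$ and $v\in\WW$, applying $(2)$ to $x_\ep:=\bar x+\ep v$ with $\ep\n{v}_C\le\delta_1$, and dividing by $\ep$ yields $|z(t,\w,\bar x,v)|\le k_1 e^{-\beta t}\n{v}_C$; hence $\n{\wit\pi_L(t,\w,\bar x)v}_C\le k_1 e^{\beta r}e^{-\beta t}\n{v}_C$ for $t\ge r$ and $v\in\WW$. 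Density of $\WW$ in $C$ and continuity of $\wit\pi_L(t,\w,\bar x)\colon C\to C$ extend this estimate to all $v\in C$, and Theorem~\ref{3.igualexp} gives $\lambda_\mK=\wit\lambda_\mK\le-\beta<0$.

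The crux is $(1)\Rightarrow(2)$. I would fix $\beta\in(0,-\lambda_\mK)$ and apply \eqref{2.lyap2} to $\wit\Pi_L$, via the lifted flow of Remark~\ref{2.notalifting}, to obtain $k_\mu\ge 1$ with $\n{\wit\pi_L(t,\w,\bar x)}_{\Lin(C,C)}\le k_\mu e^{-\beta t}$ uniformly on $\mK$. Setting $\eta(t):=y(t,\w,x)-y(t,\w,\bar x)$, $\tau_1:=\tau(\wt,u(t,\w,x))$ and $\tau_2:=\tau(\wt,u(t,\w,\bar x))$, a Taylor expansion of $F$ at $(\bar y(t),\bar y(t-\tau_2))$ combined with a first-order expansion of $\tau$ at $u(t,\w,\bar x)$ produces the identity
\[
 \dot\eta(t)=L(\Pi(t,\w,\bar x))\,\eta_t+R(t),
\]
where, by H2$^*$(2)--(3) together with the uniform Lipschitz bound $\sup_{(\w,\bar x)\in\mK}\n{\bar x}_\WW<\infty$, the remainder satisfies $|R(t)|\le N\n{\eta_t}_C^{\,2}$ on a $\delta_0$-neighborhood of $\mK$. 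The retarded variation-of-constants formula then yields
\[
 \n{\eta_t}_C\le k_\mu e^{-\beta t}\n{\eta_0}_C+\Int_0^t k_\mu e^{-\beta(t-s)}\,N\n{\eta_s}_C^{\,2}\,ds,
\]
and a nonlinear Gronwall argument applied to $\varphi(t):=e^{\beta t}\n{\eta_t}_C$ closes the estimate provided $\n{x-\bar x}_C$ is sufficiently small.

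For the bootstrap $(2)\Rightarrow(3)$ I would read off from the equation
\[
 \dot\eta(s)=F(\ws,y(s),y(s-\tau_1))-F(\ws,\bar y(s),\bar y(s-\tau_2))
\]
that $|\dot\eta(s)|\le M_1\n{\eta_s}_C$ on a bounded $\WW$-neighborhood of $\mK$, using H1 and H2$^*$(2). Combined with the $C$-decay of $(2)$, this gives exponential decay of $\n{\dot u(t,\w,x)-\dot u(t,\w,\bar x)}_{L^\infty}$ for $t\ge 2r$, and continuity of the semiflow over the compact interval $[0,2r]$ absorbs the remaining times into the final constant. The main obstacle is the quadratic remainder bound in $(1)\Rightarrow(2)$: the state-dependent term $\bar y(t-\tau_1)-\bar y(t-\tau_2)$ must be expanded to second order, which forces one to invoke both the Lipschitz regularity of $D_2\tau$ granted by H2$^*$(3) and the essentially bounded derivative of $\bar x\in\WW$ guaranteed uniformly by compactness of $\mK$ in $\WW$.
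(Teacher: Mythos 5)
Two steps of your plan do not go through as written. First, $(3)\Rightarrow(2)$ is not immediate from $\n{\cdot}_C\le\n{\cdot}_{\WW}$: the inequality points the wrong way on both sides of the implication. The hypothesis of (2) only requires $\n{x-\bar x}_C\le\delta_1$, which admits initial data whose $\WW$-distance to $\bar x$ is arbitrarily large, so (3) need not apply to them at all; and even when it does apply, (3) controls the difference by $\n{x-\bar x}_{\WW}$, whereas (2) demands control by the possibly much smaller quantity $\n{x-\bar x}_C$. Since your only route from (3) to (1) passes through (2), this hole is load-bearing. The paper instead proves $(3)\Rightarrow(1)$ directly by differentiating the estimate of (3) via Theorem~\ref{3.teorlin} and invoking \eqref{2.lyap*} --- exactly the device you use in your (sound) $(2)\Rightarrow(1)$ step, so the repair is straightforward.

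Second, and more substantively, the quadratic remainder bound $|R(t)|\le N\n{\eta_t}_C^{\,2}$ in your $(1)\Rightarrow(2)$ argument is not available under Hypotheses~\ref{4.hipos}. Condition H1 gives only \emph{continuity} of $D_2F$ and $D_3F$, not Lipschitz continuity, so the differences $D_iF(\wt,\bar y(t)+\nu\,\eta(t),\dots)-D_iF(\wt,\bar y(t),\dots)$ admit only a modulus-of-continuity bound; and the state-dependent term $\bar y(t-\tau_1)-\bar y(t-\tau_2)+\dot{\bar y}(t-\tau_2)\cdot D_2\tau(\wt,\bar u(t))\eta_t$ would need $\dot{\bar y}$ to be Lipschitz for a second-order expansion, whereas $\dot{\bar y}$ is merely uniformly continuous (the vector field and the base flow are only continuous in $\w$ and $t$; boundedness of $\dot{\bar x}$ on $\mK$ does not help here). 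The correct statement --- the content of the paper's Lemmas~\ref{4.lem1}--\ref{4.lem3} --- is that for every $\ep>0$ there is a $\delta(\ep)$-neighborhood on which $|R(t)|\le\ep\,\n{\eta_t}_C$ for $t\ge r$, with a cruder constant $3\n{D_3F}_0$ on $[0,r]$. Your Gronwall step must then be run with this $\ep$-small \emph{linear} perturbation, choosing $\beta<\alpha<-\lambda_\mK$ and $\ep$ so that $\ep\,k_\mu e^{\beta r}/(\alpha-\beta)<1/2$, together with an explicit continuation argument keeping the solution inside the $\delta(\ep)$-neighborhood (which you only gesture at). With these two repairs your architecture coincides with the paper's; the $(2)\Rightarrow(3)$ bootstrap via the bound $|\dot\eta(s)|\le M_1\n{\eta_s}_C$ is essentially the paper's argument and is fine.
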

Before stating and proving the mentioned lemmas,
we fix some real parameters and a set which will play an important role
in what follows. We define
\[
\begin{split}
 r_0&:=1+\sup\{\n{\bar x}_{C}\,|\,(\w,\bar x)\in \mK\}\,,\\
 \mB_0&:=\{x\in C\,|\;\n{x}_C\le r_0\}\subset C\,,
\end{split}
\]
and represent by $L_1^0$ and $L_2^0$ the Lipschitz constants of the functions $\tau$ and
$D_2\tau$ on $\W\times \mB_0$, respectively provided by
conditions \hyperlink{4.H2*2}{H2$^*$(2)} and \hyperlink{4.H2*3}{H2$^*$(3)}.
We also denote
\[
\begin{split}
|F|_0&:=\sup\{|F(\w, h, k)|\;|\;\w\in\W\,,\,|h|\le r_0,\,|k|\le r_0\}\,,\\
\n{D_2F}_0&:=\sup\{\n{D_2F(\w, h, k)}_{\Lin(\R^{n}, \R^{n})}\,|\;
\w\in\W\,,\,|h|\le r_0,\,|k|\le r_0\}\,,\\
\n{D_3F}_0&:=\sup\{\n{D_3F(\w, h, k)}_{\Lin(\R^{n}, \R^{n})}\,|\;
\w\in\W,\,|h|\le r_0,\,|k|\le r_0\}\,,\\
\n{D_2\tau}_0&:=\sup\{\n{D_2\tau(\w,\bar x+x)}_{\Lin(C, \R)}\,|\;
(\w,\bar x)\in \mK,\,\n{x}_{C}\le1\}\,.
\end{split}
\]
It follows from \hyperlink{4.H2*3}{H2$^*$(3)} that
$\n{D_2\tau}_0\le L_2^0+\sup\{\n{D_2\tau(\w,\bar x)}_{\Lin(C,\R)}\,|\,
(\w,\bar x)\in \mK\}$, so that it is finite. Condition \hyperlink{4.H1}{H1}
ensures the same property for
$|F|_0$, $\n{D_2F}_0$ and $\n{D_3F}_0$. We assume without restriction
that the six constants $L_1^0$, $L_2^0$, $|F|_0$, $\n{D_2F}_0$, $\n{D_3F}_0$ and
$\n{D_2\tau}_0$ are strictly positive.
\par
Recall the notations $y(t,\w,x)$ and $u(t,\w,x)$ established in Section~\ref{sec3}.
Recall also that they are defined for $t\in[-r,\beta_{\w,x})$ and $t\in[0,\beta_{\w,x})$,
respectively. And note that $\beta_{\w,\bar x}=\infty$ if $(\w,\bar x)\in \mK$.
In the proofs of the next three lemmas, and in that of Theorem~\ref{4.teo}, we will be working
under Hypotheses~\ref{4.hipos}, and with two previously fixed points
$(\w,x)\in\W\times\WW$ and $(\w,\bar x)\in \mK$.
Therefore, the functions $y(t,\w,x)$ and $y(t,\w,\bar x)$ are both
defined in $[-r,\beta_{\w,x})$
and the functions $u(t,\w,x)$ and $u(t,\w,\bar x)$ are both
defined on $[0,\beta_{\w,x})$.
To simplify the notation, we will represent
\begin{equation}\label{4.notacion}
\begin{array}{lll}
 y(t):=y(t,\w,x)&\text{and\;}\quad\bar y(t):=
 y(t,\w,\bar x)&\text{\;\;for $\;t\in[-r,\beta_{\w,x})\,,$}\\
 \wit y(t):=y(t)-\bar y(t)&&
 \text{\;\;for $\;t\in[-r,\beta_{\w,x})\,,$}\\
 u(t):=u(t,\w,x)&\text{and\;}\quad\bar u(t):=u(t,\w,\bar x)&
 \text{\;\;for $\;t\in[0,\beta_{\w,x})\,.$}
\end{array}
\end{equation}
We will also be working under the assumption
\begin{equation}\label{4.cota1}
 \n{u(t)-\bar u(t)}_C\le 1\quad\text{\;\;for all $t\in[0,T]$}
\end{equation}
where $T$ is a fixed time in $[0,\beta_{\w,x})$. This inequality
together with the fact that $(\wt,\bar u(t))$ belongs to $\mK$ ensures that
\begin{equation}\label{4.cotayt}
\!\!\!\!\!\!\!\begin{array}{ll}
|y(t+s)|\le|\bar y(t+s)|+\n{u(t)-\bar u(t)}_C\le r_0 &\text{for $t\in[0,T]$ and
$s\in[-r,0]$}\,,\\[.15cm]
|\bar y(t+s)|< r_0 &\text{for $t\in[0,T]$ and
$s\in[-r,0]$}\,.
\end{array}
\end{equation}
\begin{lema}\label{4.lem1}
Suppose that Hypotheses~\ref{4.hipos} hold. Then, for every $\ep>0$ there exists
$\delta_3=\delta_3(\ep)\in(0,1]$ such that, if $(\w,\bar x)\in \mK$,
$(\w,x)\in\W\times\WW$, $T\in[0,\beta_{\w,x})$, and
$\n{u(t)-\bar{u}(t)}_{C}\le\delta_3$ for every
$t\in[0, T]$, then
\[
\begin{split}
&\big|\bar{y}(t-\tau(\wt,u(t))) - \bar{y}(t-\tau(\wt,\bar{u}(t)))\\
&\qquad\qquad+\dot{\bar{y}}(t-\tau(\wt,\bar{u}(t)))\pu D_2\tau(\wt,\bar{u}(t))(u(t) - \bar{u}(t))\big|\le\ep\,\n{u(t) - \bar{u}(t)}_{C}
\end{split}
\]
for every $t\in[0, T]$. The notation~\eqref{4.notacion} is used in this statement.
\end{lema}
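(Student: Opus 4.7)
The plan is to carry out two Taylor-type expansions---one for $\tau$ via condition H2$^*$(3) and one for $\bar y$ via the fundamental theorem of calculus---and then to control both remainders uniformly in $t\in[0,T]$ and $(\w,\bar x)\in\mK$. Set $\tau_1:=\tau(\wt,u(t))$, $\tau_2:=\tau(\wt,\bar u(t))$, and $\Delta u:=u(t)-\bar u(t)$. Note that the assumption $\n{u(t)-\bar u(t)}_C\le\delta_3\le 1$ together with $(\wt,\bar u(t))\in\mK$ keeps both $\bar u(t)$ and $\bar u(t)+s\,\Delta u$ inside $\mB_0$ for every $s\in[0,1]$, so the constants $L_1^0$ and $L_2^0$ apply.

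First, applying the fundamental theorem of calculus to the map $s\mapsto\tau(\wt,\bar u(t)+s\,\Delta u)$ on $[0,1]$ yields
\[
\tau_1-\tau_2=D_2\tau(\wt,\bar u(t))\,\Delta u+R_2,\quad R_2=\int_0^1\bigl(D_2\tau(\wt,\bar u(t)+s\,\Delta u)-D_2\tau(\wt,\bar u(t))\bigr)\,\Delta u\,ds,
\]
with $|R_2|\le(L_2^0/2)\n{\Delta u}_C^2$ by H2$^*$(3). Second, integrating $\dot{\bar y}$ between $t-\tau_2$ and $t-\tau_1$ and substituting $s=t-\tau_2+u$ gives
\[
\bar y(t-\tau_1)-\bar y(t-\tau_2)=-(\tau_1-\tau_2)\,\dot{\bar y}(t-\tau_2)+R_1,\quad R_1=\int_0^{\tau_2-\tau_1}\bigl(\dot{\bar y}(t-\tau_2+u)-\dot{\bar y}(t-\tau_2)\bigr)du,
\]
with $|\tau_2-\tau_1|\le L_1^0\,\n{\Delta u}_C$ by H2$^*$(2). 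Substituting the first identity into the second shows that the quantity to be bounded in the statement equals $R_1-R_2\,\dot{\bar y}(t-\tau_2)$.

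The factor $|\dot{\bar y}(\cdot)|$ is bounded by $|F|_0$ on $[-r,\infty)$: on $[0,\infty)$ this is \eqref{3.dery}, and on $[-r,0)$ we use a backward extension $\theta$ of $(\w,\bar x)$ in $\mK$ to write $\dot{\bar y}(s)=G(\Pi(s+r,\theta(-r)))$, where $G(\w,\bar x):=F(\w,\bar x(0),\bar x(-\tau(\w,\bar x)))$ is continuous on $\mK$ (continuity at $s=0$ is guaranteed by $\mK\subset\mC_0$). Hence $|R_2\,\dot{\bar y}(t-\tau_2)|\le(L_2^0|F|_0/2)\,\delta_3\,\n{\Delta u}_C$. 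The main obstacle is bounding $R_1$: it requires uniform equicontinuity of $\dot{\bar y}$ on $[-r,\infty)$ \emph{uniformly in $(\w,\bar x)\in\mK$}. Setting $\eta(s):=\Pi(s+r,\theta(-r))\in\mK$, we have $\dot{\bar y}=G\circ\eta$; the cocycle identity $\eta(s_2)=\Pi(|s_2-s_1|,\eta(s_1))$ for $s_2\ge s_1$, together with the uniform continuity of $\Pi|_\mK$ on the compact set $[0,1]\times\mK$, implies $d_\mK(\Pi(h,q),q)\to 0$ as $h\to 0^+$ uniformly in $q\in\mK$, so $\eta$ is uniformly equicontinuous. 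The uniform continuity of $G$ on the compact $\mK$ then transfers this modulus to $\dot{\bar y}$.

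Given $\ep>0$, I would finally pick $\rho>0$ such that $|s_1-s_2|\le\rho$ implies $|\dot{\bar y}(s_1)-\dot{\bar y}(s_2)|\le\ep/(2L_1^0)$ uniformly over $(\w,\bar x)\in\mK$, and set
\[
\delta_3:=\min\bigl(1,\ep/(L_2^0|F|_0),\rho/L_1^0\bigr).
\]
With this choice, $|R_1|\le L_1^0\,\n{\Delta u}_C\cdot\ep/(2L_1^0)=(\ep/2)\n{\Delta u}_C$ and $|R_2\,\dot{\bar y}(t-\tau_2)|\le(\ep/2)\n{\Delta u}_C$, so their sum is at most $\ep\,\n{\Delta u}_C$, as required.
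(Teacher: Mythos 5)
Your proof is correct and follows essentially the same route as the paper's: the paper packages the two error terms via a mean-value bound on the interpolating path $s\mapsto\bar{y}(t-\tau(\wt,\bar{u}(t)+s(u(t)-\bar{u}(t))))+s\,\dot{\bar{y}}(t-\tau(\wt,\bar{u}(t)))\cdot D_2\tau(\wt,\bar{u}(t))(u(t)-\bar{u}(t))$, whereas you write them as two explicit integral remainders $R_1$ and $R_2\,\dot{\bar y}(t-\tau_2)$. The controlling ingredients are identical in both arguments --- the Lipschitz constants $L_1^0$, $L_2^0$ on $\W\times\mB_0$, the bound $|\dot{\bar y}|\le|F|_0$ coming from $\mK\subset\mC_0$ and the backward extension, and the uniform equicontinuity of $\dot{\bar y}$ derived from the uniform continuity of $\Pi$ on $[0,1]\times\mK$ --- and the resulting $\delta_3$ has the same form.
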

\begin{proof}
Let us fix $(\w,\bar x)\in \mK$, $(\w,x)\in\W\times\WW$ and $T\in[0,\beta_{\w,x})$.
The notation~\eqref{4.notacion} will be used in the proof. In what follows we
will assume (without loss of generality) that~\eqref{4.cota1} holds,
and hence that inequalities~\eqref{4.cotayt} are valid.
\par
Since $\mK\subset\mC_0$ (see Remark \ref{3.notapropK}), we have
$\bar{y}\in C^1([-r, T], \R^{n})$.
For each $t\in[0,T]$, we define $p_{\,t}\colon[0,1]\to\R^{n}$ by
\[
\begin{split}
 p_{\,t}(s)&:=\bar{y}(t-\tau(\wt,\bar{u}(t)+s(u(t)-\bar{u}(t))))\\
 &\,\quad+s\,\dot{\bar{y}}(t-\tau(\wt,\bar{u}(t)))\pu D_2\tau(\wt,\bar{u}(t))
 (u(t) - \bar{u}(t))\,.
\end{split}
\]
Thus,
\[
\begin{split}
 p_{\,t}(1)-p_{\,t}(0)&=\bar{y}(t-\tau(\wt,u(t))) - \bar{y}(t-\tau(\wt,\bar{u}(t)))\\
 &\quad+ \dot{\bar y}(t-\tau(\wt,\bar{u}(t)))\pu D_2\tau(\wt,\bar{u}(t))(u(t) - \bar{u}(t))\,,
\end{split}
\]
so that the assertion of the lemma is equivalent to the property
\begin{equation}\label{4.cotaep}
 \Frac{|p_{\,t}(1)-p_{\,t}(0)|}{\n{u(t) - \bar{u}(t)}_{C}}\le\ep
 \qquad\text{for all $t\in[0,T]\;$ with $u(t)\ne \bar u(t)$}\,.
\end{equation}
The chain rule implies that $p_{\,t}$ is continuously differentiable, and that
\[
\begin{split}
\dot{p_{\,t}}(s)&=\big\{\!-\dot{\bar{y}}(t-\tau(\wt,\bar{u}(t)+s\,
(u(t)-\bar{u}(t))))\pu D_2\tau(\wt,\bar{u}(t)+s(u(t)-\bar{u}(t)) )\\
&\quad+\dot{\bar{y}}(t-\tau(\wt,\bar{u}(t)))\pu D_2\tau(\wt,\bar{u}(t))\big\}(u(t) -
\bar{u}(t))\,.
\end{split}
\]
Since $p_{\,t}(1)-p_{\,t}(0)=\int_0^1 \dot{p}_{\,t}(s)\,ds$, we can take
$s\in[0,1]$ with
$|p_{\,t}(1)-p_{\,t}(0)|\le |\dot p_{\,t}(s)|$.
Having in mind the definitions of $\n{D_2\tau}_0$ and $|F|_0$,
and the second bound in~\eqref{4.cotayt}, we have, if $u(t)\ne\bar u(t)$,
\[
\begin{split}
 &\Frac{|p_{\,t}(1)-p_{\,t}(0)|}{\n{u(t)-\bar{u}(t)}_{C}}\le
 \Frac{|\dot{p_{\,t}}(s)|}{\n{u(t)-\bar{u}(t)}_{C}}\\
 &\quad\le|\dot{\bar{y}}(t-\tau(\wt,\bar{u}(t)))-
 \dot{\bar y}(t-\tau(\wt,\bar{u}(t)+s(u(t)-\bar{u}(t))))|\cdot\\
 &\qquad\qquad\qquad\qquad\qquad\qquad\qquad\qquad\quad
 \cdot\n{D_2\tau(\wt,\bar{u}(t)+s(u(t)-\bar{u}(t)))}_{\Lin(C, \R)}\\
 &\quad\;\;+|\dot{\bar y}(t-\tau(\wt,\bar{u}(t)))|\,\n{D_2\tau(\wt,\bar{u}(t)) -
 D_2\tau(\wt,\bar{u}(t)+s(u(t)-\bar{u}(t)))}_{\Lin(C, \R)}\\
 &\quad\le\n{D_2\tau}_0\,|\dot{\bar{y}}(t-\tau(\wt,\bar{u}(t))) -
 \dot{\bar y}(t-\tau(\wt,\bar{u}(t)+s(u(t)-\bar{u}(t))))|\\
 &\quad\quad+|F|_0\,\n{D_2\tau(\wt,\bar{u}(t)) -
 D_2\tau(\wt,\bar{u}(t)+s(u(t)-\bar{u}(t)))}_{\Lin(C, \R)}\,.
\end{split}
\]
The restricted map $\Pi\colon[0,1]\times \mK\to\mK$,
$(t,\w,\bar x)\mapsto(\wt,u(t,\w,\bar x))$ is uniformly continuous.
Therefore, given our $\ep>0$, there exists $\rho_\ep\in(0,1]$ such that,
if $0\le s\le\rho_\ep$, then
$\n{u(s,\w_0,\bar x_0)-\bar x_0}_{\WW}=\n{u(s,\w_0,\bar x_0)-
u(0,\w_0,\bar x_0)}_{\WW}\le\ep/(2\,\n{D_2\tau}_0)$
for every $(\w_0,\bar x_0)\in \mK$. Now we take the point $(\w^*,\bar x^*)\in\mK$
such that $(\w,\bar x)=\Pi(r,\w^*,\bar x^*)$. Then, if $-r\le t_1<t_2\le t_1+\rho_\ep$,
we obtain
\[
\begin{split}
 |\dot{\bar{y}}(t_2)-\dot{\bar{y}}(t_1)|&\le\n{u(t_2+r,\w^*,\bar x^*)-
 u(t_1+r,\w^*,\bar x^*)}_{\WW}\\[.1cm]
 &=\n{u(t_2-t_1,\w^*\pu(t_1+r),u(t_1+r,\w^*,\bar x^*))-
 u(t_1+r,\w^*,\bar x^*)}_{\WW}\\
 &\le\Frac{\ep}{2\,\n{D_2\tau}_0}\:.
\end{split}
\]
In addition to~\eqref{4.cota1}, we assume that
\begin{equation}\label{4.cota2}
 \n{u(t)-\bar u(t)}_C\le \frac{\rho_\ep}{L_1^0}\qquad\text{for all $t\in[0,T]$}\,,
\end{equation}
so that
\[
 |\tau(\wt,\bar{u}(t))-\tau(\wt,\bar{u}(t)+s(u(t)-\bar{u}(t)))|\le
 L_1^0\,s\,\n{u(t)-\bar{u}(t)}_C\le\rho_\ep\,,
\]
and consequently, for all $t\in[0,T]$,
\[
 |\dot{\bar{y}}(t-\tau(\wt,\bar{u}(t))) - \dot{\bar{y}}
 (t-\tau(\wt,\bar{u}(t)+s(u(t)-\bar{u}(t))))|\le \frac{\ep}{2\,\n{D_2\tau}_0}\;.
\]
Finally, in addition to~\eqref{4.cota1} and~\eqref{4.cota2}, we assume that
\begin{equation}\label{4.cota3}
 \n{u(t)-\bar u(t)}_C\le \frac{\ep}{2\,L_2^0\,|F|_0}\qquad\text{for all $t\in[0,T]$}\,,
\end{equation}
so that, for all $t\in[0,T]$,
\[
 \n{D_2\tau(\wt,\bar{u}(t)) - D_2\tau(\wt,\bar{u}(t)+s(u(t)-\bar{u}(t)))}_{\Lin(C, \R)}
 \!\le\!L_2^0\,s\,\n{u(t)-\bar{u}(t)}_{C}\!\le\!\Frac{\ep}{2\,|F|_0}\:.
\]
Altogether, these properties show that~\eqref{4.cotaep} holds
if~\eqref{4.cota1}, \eqref{4.cota2} and \eqref{4.cota3} hold, for which it suffices to take
$\delta_3:=\min\big(1,\,\rho_\ep/L_1^0,\,\ep/(2L_2^0|F|_0)\big)$ and
$\n{u(t)-\bar u(t)}_C\le \delta_3$ for all $t\in[0,T]$. This is the value of $\delta_3$
appearing in the statement.
\end{proof}
\begin{lema}\label{4.lem2}
Suppose that Hypotheses~\ref{4.hipos} hold. Then, for every $\ep>0$ there exists
$\delta_4=\delta_4(\ep)\in(0,1]$ such that, if $(\w,\bar x)\in \mK$,
$(\w,x)\in\W\times\WW$, $T\in[0,\beta_{\w,x})$, and
$\n{u(t)-\bar{u}(t)}_{C}\le\delta_4$ for every
$t\in[0, T]$, then
\[
 |\wit y(t-\tau(\wt,u(t)))-\wit y(t-\tau(\wt,\bar{u}(t)))|\le\left\{\!
 \begin{array}{ll}
 2\,\n{u(t)-\bar{u}(t)}_{C}&\text{if}\;\;\;t\in[0,r]\,,\\[0.1cm]
 \ep\,\n{u(t)-\bar{u}(t)}_{C}&\text{if}\;\;\;t\in[r,T]\,.
 \end{array}
 \right.
\]
The notation~\eqref{4.notacion} is used in this statement.
\end{lema}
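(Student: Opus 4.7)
The plan is to split into the two cases $t\in[0,r]$ and $t\in[r,T]$, exploiting the fact that in the first case both shifted arguments lie in $[t-r,t]$ where $\wit y$ is directly controlled by $\n{u(t)-\bar u(t)}_C$, while in the second case both arguments lie in $[0,T]$ where $\wit y$ is differentiable and the fundamental theorem of calculus applies. Throughout, I would assume \eqref{4.cota1} (taking $\delta_4\le 1$) so that \eqref{4.cotayt} holds, and I would use the notation \eqref{4.notacion}.

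For the case $t\in[0,r]$, I would observe that $\tau_1:=\tau(\wt,u(t))$ and $\tau_2:=\tau(\wt,\bar u(t))$ lie in $[0,r]$, so $t-\tau_i\in[t-r,t]$. For any $s\in[t-r,t]$ we have $\wit y(s)=y(s)-\bar y(s)=u(t)(s-t)-\bar u(t)(s-t)$, and hence $|\wit y(s)|\le \n{u(t)-\bar u(t)}_C$. The triangle inequality gives the factor $2$ immediately, with no restriction on $\delta_4$ beyond $\delta_4\le 1$.

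For the case $t\in[r,T]$, both $t-\tau_1$ and $t-\tau_2$ lie in $[0,T]$, so I would write
\[
 \wit y(t-\tau_1)-\wit y(t-\tau_2)=\int_{t-\tau_2}^{t-\tau_1}\bigl(\dot y(s)-\dot{\bar y}(s)\bigr)\,ds,
\]
and bound $|\tau_1-\tau_2|\le L_1^0\,\n{u(t)-\bar u(t)}_C$ via \hyperlink{4.H2*2}{H2$^*$(2)}. The key step is to show that there is a constant $C'$ (depending only on $|F|_0$, $\n{D_2F}_0$, $\n{D_3F}_0$ and $L_1^0$) such that $|\dot y(s)-\dot{\bar y}(s)|\le C'\,\n{u(s)-\bar u(s)}_C$ for every $s\in[0,T]$. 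Using \eqref{3.dery}, the mean value inequality for $F$ and splitting $|y(s-\tau_u^s)-\bar y(s-\tau_{\bar u}^s)|\le \n{u(s)-\bar u(s)}_C+|\bar y(s-\tau_u^s)-\bar y(s-\tau_{\bar u}^s)|$, one reduces the problem to controlling the Lipschitz constant of $\bar y$ on the relevant interval. Here the backward extension hypothesis in Hypotheses~\ref{4.hipos} is essential: every value $\bar y(s')$ with $s'\in[-r,T]$ equals $y(r+s',\w^*,\bar x^*)$ for a suitable $(\w^*,\bar x^*)\in\mK$ obtained via the extension, and therefore $|\dot{\bar y}|\le |F|_0$ everywhere, giving $|\bar y(s-\tau_u^s)-\bar y(s-\tau_{\bar u}^s)|\le |F|_0 L_1^0\n{u(s)-\bar u(s)}_C$. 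Collecting terms yields $C'=\n{D_2F}_0+\n{D_3F}_0(1+|F|_0 L_1^0)$.

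Putting the pieces together, for $s$ in the integration interval we have $\n{u(s)-\bar u(s)}_C\le\delta_4$, hence
\[
 |\wit y(t-\tau_1)-\wit y(t-\tau_2)|\le C'\delta_4\cdot L_1^0\,\n{u(t)-\bar u(t)}_C,
\]
and it suffices to set $\delta_4(\ep):=\min\bigl(1,\,\ep/(L_1^0 C')\bigr)$. The main obstacle is the uniform Lipschitz control of $\bar y$ on $[-r,T]$: since $(\w,\bar x)$ is only assumed to admit a backward extension in $\mK$ rather than coming from globally smooth data, this Lipschitz bound must be extracted from the SDDE satisfied along the extended trajectory together with the bound $\n{\bar x}_C<r_0$ built into the definition of $\mB_0$; once that is in place, the rest is routine estimation.
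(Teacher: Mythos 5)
Your proposal is correct and follows essentially the same route as the paper: the case $t\in[0,r]$ by the trivial bound $|\wit y(s)|\le\n{u(t)-\bar u(t)}_C$ on $[t-r,t]$, and the case $t\in[r,T]$ via $|\tau(\wt,u(t))-\tau(\wt,\bar u(t))|\le L_1^0\n{u(t)-\bar u(t)}_C$ together with the bound $|\dot y(s)-\dot{\bar y}(s)|\le(\n{D_2F}_0+\n{D_3F}_0(1+|F|_0L_1^0))\n{u(s)-\bar u(s)}_C$ obtained from \eqref{3.dery}, \eqref{4.cotayt} and the Lipschitz control of $\bar y$ coming from the backward extension in $\mK$, leading to the same choice $\delta_4=\min(1,\ep/(L_1^0 C'))$.
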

\begin{proof}
Let us fix $(\w,\bar x)\in \mK$, $(\w,x)\in\W\times\WW$ and
$T\in[0,\beta_{\w,x})$. The notation~\eqref{4.notacion} will be used in the proof.
And we will assume that~\eqref{4.cota1} holds, so that also~\eqref{4.cotayt} holds.
\par
The inequality is almost immediate for $t\in[0,r]$, so we must just consider the
case $t\in[r, T]$. Note that
\begin{equation}\label{4.pr}
\begin{split}
 &|\wit y(t-\tau(\wt,u(t)))-\wit y(t-\tau(\wt,\bar{u}(t)))|\\
 &\qquad\qquad\le\Big(\,\max_{s\in[t-r,t]}|\dot{\wit y}(s)|\,\Big)\,
 |\tau(\wt,u(t))-\tau(\wt,\bar{u}(t))|\,.
\end{split}
\end{equation}
We take $s\in[t-r,t]\subseteq[0,T]$ and use \eqref{3.dery} to calculate
$\dot{\wit y}(s)=\dot{y}(s)-\dot{\bar{y}}(s)$. Since
\begin{equation}\label{4.int}
 F(\w,y_1,y_2)-F(\w,\bar y_1,\bar y_2)=\!\int_0^1\!\Frac{d}{d\nu}\,F(\w,\nu y_1+(1-\nu)\bar y_1,
 \nu y_2+(1-\nu)\bar y_2)\,d\nu
\end{equation}
and $|\nu z_1+(1-\nu)z_2|\le r_0$ if $|z_1|\le r_0$ and $|z_2|\le r_0$, we can apply
\eqref{4.cotayt} and the definitions of $\n{D_2F}_0$ and $\n{D_3F}_0$ in order to obtain
\[
\begin{split}
&|\dot{\wit y}(s)|=|\dot{y}(s)-\dot{\bar{y}}(s)|\\
&\quad=\big|F(\ws, y(s), y(s-\tau(\ws, u(s))))-
F(\ws, \bar{y}(s), \bar{y}(s-\tau(\ws, \bar{u}(s))))\big|\\
&\quad\le\n{D_2F}_0\,|y(s)-\bar{y}(s)|+\n{D_3F}_0\,
\big|y(s-\tau(\ws, u(s)))-\bar{y}(s-\tau(\ws, \bar{u}(s)))\big|\\
&\quad\le\n{D_2F}_0\,\big|y(s)-\bar{y}(s)\big|+\n{D_3F}_0\,
\Big(\big|y(s-\tau(\ws, u(s)))-\bar{y}(s-\tau(\ws, u(s)))\big|\\
&\quad\quad+\big|\bar{y}(s-\tau(\ws, u(s)))-\bar{y}(s-\tau(\ws, \bar{u}(s)))\big|\Big).
\end{split}
\]
Note that, due to~\eqref{4.cotayt}, $u(t)$ and $\bar u(t)$ belong to $\mB_0$.
Hence, by the definition of $L_1^0$,
\begin{equation}\label{4.cota5}
|\tau(\wt,u(t))-\tau(\wt,\bar{u}(t))|\le L_1^0\,\n{u(t)-\bar{u}(t)}_{C}\,.
\end{equation}
Therefore, using again~\eqref{3.dery} and \eqref{4.cotayt}, we obtain
\begin{equation}\label{4.paluego}
|\bar{y}(s-\tau(\ws, u(s)))-\bar{y}(s-\tau(\ws, \bar{u}(s)))|
\le |F|_0L_1^0\,\n{u(s)-\bar{u}(s)}_{C}\,.
\end{equation}
Altogether, we have
\[
\begin{split}
|\dot{\wit y}(s)|&
\le\n{D_2F}_0\,\n{u(s)-\bar{u}(s)}_{C}\\
&\quad +\n{D_3F}_0\big(\n{u(s)-\bar{u}(s)}_{C}+ |F|_0L_1^0\,\n{u(s)-\bar{u}(s)}_{C}\big)\\
&=\big(\n{D_2F}_0+\n{D_3F}_0\,(1+|F|_0L_1^0)\big)\,\n{u(s)-\bar{u}(s)}_{C}.
\end{split}
\]
This inequality together with those given by \eqref{4.cota5} and~\eqref{4.pr} prove the lemma for
$\delta_4:=\min\big(1,\ep/\big(\n{D_2F}_0+\n{D_3F}_0(1+|F|_0L_1^0)\,L_1^0\,\big)\big)$.
\end{proof}
\begin{lema}\label{4.lem3}
Suppose that Hypotheses~\ref{4.hipos} hold, and define $g\colon\mK\times\WW\to\R^{n}$ by
\[
 g(\w,\bar x, x):=F(\w,x(0), x(-\tau(\w,x)))-F(\w,\bar x(0),
 \bar x(-\tau(\w,\bar x)))-L(\w,\bar x)(x-\bar x)\,,
\]
where $L(\w,\bar x)$ is given by~\eqref{3.defL}.
Then, for every $\ep\in(0,(4/3)\n{D_3F}_0]$ there exists $\delta_5=\delta_5(\ep)\in(0,1]$
such that, if $(\w,\bar x)\in \mK$, $(\w,x)\in\W\times\WW$, $T\in[0,\beta_{\w,x})$,
and $\n{u(t)-\bar{u}(t)}_{C}\le\delta_5$ for every $t\in[0, T]$, then
\[
|g(\wt,\bar{u}(t), u(t))|\le\left\{
 \begin{array}{ll}
 3\,\n{D_3F}_0\,\n{u(t)-\bar{u}(t)}_{C}&\text{if}\;\;\;t\in[0,r]\,,\\[0.1cm]
 \ep\,\n{u(t)-\bar{u}(t)}_{C}&\text{if}\;\;\;t\in[r,T]\,.
 \end{array}
 \right.
\]
The notation~\eqref{4.notacion} is used in this statement.
\end{lema}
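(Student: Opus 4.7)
The strategy is to perform a first-order Taylor expansion of $F$ at the base point $(\wt,\bar y(t),\bar y(\bar s))$, recognize that after cancellation the remaining difference splits as a sum of the two quantities estimated in Lemmas~\ref{4.lem1} and~\ref{4.lem2} plus a higher-order remainder, and control this remainder via the uniform continuity of $D_2F,D_3F$ on a compact set. I use the notation~\eqref{4.notacion} throughout, and additionally write $s:=t-\tau(\wt,u(t))$, $\bar s:=t-\tau(\wt,\bar u(t))$, and $D_iF_0:=D_iF(\wt,\bar y(t),\bar y(\bar s))$ for $i=2,3$.

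Applying formula~\eqref{4.int} jointly in the second and third arguments of $F$ yields
\[
F(\wt,y(t),y(s))-F(\wt,\bar y(t),\bar y(\bar s))=D_2F_0\,(y(t)-\bar y(t))+D_3F_0\,(y(s)-\bar y(\bar s))+R(t),
\]
where
\[
R(t)=\int_0^1\!\bigl[D_2F(\wt,\cdot,\cdot)-D_2F_0\bigr](y(t)-\bar y(t))\,d\nu+\int_0^1\!\bigl[D_3F(\wt,\cdot,\cdot)-D_3F_0\bigr](y(s)-\bar y(\bar s))\,d\nu
\]
is evaluated along the segment $(\nu y(t)+(1-\nu)\bar y(t),\,\nu y(s)+(1-\nu)\bar y(\bar s))$. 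Substituting this into the definition of $g(\wt,\bar u(t),u(t))$ the $D_2F_0$-term cancels exactly; then rewriting $y(s)-y(\bar s)=[\wit y(s)-\wit y(\bar s)]+[\bar y(s)-\bar y(\bar s)]$ one arrives at the clean decomposition
\[
g(\wt,\bar u(t),u(t))=D_3F_0\bigl[\wit y(s)-\wit y(\bar s)\bigr]+D_3F_0\bigl[\bar y(s)-\bar y(\bar s)+\dot{\bar y}(\bar s)\,D_2\tau(\wt,\bar u(t))(u(t)-\bar u(t))\bigr]+R(t).
\]
The second bracket is exactly the quantity bounded by Lemma~\ref{4.lem1}, and the first bracket is exactly the one bounded by Lemma~\ref{4.lem2}: the former contributes at most $\ep_1\n{u(t)-\bar u(t)}_C$ with $\ep_1>0$ arbitrary by shrinking $\delta$, while the latter contributes at most $2\n{u(t)-\bar u(t)}_C$ on $[0,r]$ and at most $\ep_2\n{u(t)-\bar u(t)}_C$ on $[r,T]$ with $\ep_2>0$ arbitrary.

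It remains to estimate $R(t)$. The bounds $|y(t)-\bar y(t)|\le\n{u(t)-\bar u(t)}_C$ and $|y(s)-\bar y(\bar s)|\le(1+|F|_0L_1^0)\n{u(t)-\bar u(t)}_C$ (the latter via~\eqref{4.paluego} after inserting $\bar y(s)$) give linear $C$-norm control, while~\eqref{4.cotayt} confines the arguments at which $D_iF$ is evaluated to the compact set $\W\times\{|h|\le r_0\}\times\{|k|\le r_0\}$. Uniform continuity of $D_iF$ on this compact set supplies, for any prescribed $\eta>0$, a $\delta>0$ such that $|D_iF(\wt,\cdot,\cdot)-D_iF_0|\le\eta$ along the whole integration segment whenever $\n{u(t)-\bar u(t)}_C\le\delta$, whence $|R(t)|\le\eta(2+|F|_0L_1^0)\n{u(t)-\bar u(t)}_C$. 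I would then choose $\ep_1,\ep_2,\eta$ so small that on $[r,T]$ the combined bound $(\ep_1+\ep_2)\n{D_3F}_0+\eta(2+|F|_0L_1^0)$ does not exceed $\ep$, while on $[0,r]$ the combined bound $\ep_1\n{D_3F}_0+\eta(2+|F|_0L_1^0)$ does not exceed $\n{D_3F}_0$, so that adding the $2\n{D_3F}_0\n{u(t)-\bar u(t)}_C$ produced by Lemma~\ref{4.lem2} gives exactly $3\n{D_3F}_0\n{u(t)-\bar u(t)}_C$; setting $\delta_5$ to be the minimum (intersected with $(0,1]$) of the $\delta_3(\ep_1),\delta_4(\ep_2)$ from Lemmas~\ref{4.lem1} and~\ref{4.lem2} and the $\delta$ from the uniform-continuity step produces the claimed estimates. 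The restriction $\ep\le(4/3)\n{D_3F}_0$ is what guarantees that these two inequalities on the parameters can be solved simultaneously. The main technical point is the estimate of $R(t)$: one must obtain a \emph{uniform} modulus of continuity for $D_iF$, not merely a pointwise $o(\n{u(t)-\bar u(t)}_C)$ bound, and this is precisely what the compactness of $\mK$ together with~\eqref{4.cotayt} makes available.
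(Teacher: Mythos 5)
Your proposal is correct and takes essentially the same route as the paper's proof: the same exact cancellation of the $D_2F$-term, the same splitting of the $D_3F_0$-contribution into the quantity of Lemma~\ref{4.lem1} (for $\bar y$) plus that of Lemma~\ref{4.lem2} (for $\wit y$), and the same control of the integral remainder via uniform continuity of $D_2F,D_3F$ on the compact set fixed by~\eqref{4.cotayt}, using the linear bounds $|y(t)-\bar y(t)|\le\n{u(t)-\bar u(t)}_C$ and $|w(t)-\bar w(t)|\le(1+|F|_0L_1^0)\n{u(t)-\bar u(t)}_C$. The only (immaterial) difference is the bookkeeping of the $\ep$-budget: the paper fixes $\ep/4$ for each of the four contributions, which is where the restriction $\ep\le(4/3)\n{D_3F}_0$ actually enters ($3\ep/4+2\n{D_3F}_0\le 3\n{D_3F}_0$), whereas with your free parameters $\ep_1,\ep_2,\eta$ the two inequalities are solvable for any $\ep>0$, so that restriction is not really a solvability condition in your formulation.
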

\begin{proof}
Let us fix $(\w,\bar x)\in \mK$, $(\w,x)\in\W\times\WW$ and $T\in[0,\beta_{\w,x})$.
The notation~\eqref{4.notacion} will be used in the proof. In what follows we
will assume that~\eqref{4.cota1} holds, and hence also~\eqref{4.cotayt} is valid.
\par
We write
$w(t):=y(t-\tau(\wt,u(t)))$ and $\bar w(t):=\bar y(t-\tau(\wt,\bar u(t)))$
for $t\in[0,T]$.~The
definitions of $g(\wt,\bar u(t),u(t))$ and $L(\wt,\bar u(t))$ (see \eqref{3.defL})
together with \eqref{4.int}
yield
\[
\begin{split}
&|g(\wt,\bar{u}(t), u(t))|\\
&\quad=\big|F(\wt,y(t), w(t))-F(\wt,\bar y(t), \bar w(t))-
D_2F(\wt,\bar y(t), \bar w(t))(y(t)-\bar y(t))\\
&\quad\quad-D_3F(\wt,\bar y(t),\bar w(t))
(y(t-\tau(\wt,\bar u(t)))-\bar y(t-\tau(\wt,\bar u(t))))\\
&\quad\quad+D_3F(\wt,\bar y(t),\bar w(t))\,\dot{\bar{y}}(t-\tau(\wt,\bar{u}(t)))
\cdot D_2\tau(\wt,\bar{u}(t))(u(t)-\bar{u}(t))\big|\\
&\quad\le\sup_{\nu\in[0,1]}\big\|D_2F(\wt,\bar y(t)+\nu(y(t)-\bar y(t)),
\bar w(t)+\nu(w(t)-\bar w(t)))\\[-.2cm]
&\quad\;\qquad\qquad-D_2F(\wt,\bar y(t), \bar w(t))\big\|_{\Lin(\R^{n}, \R^{n})}
\,|y(t)-\bar y(t)|\\
&\quad\quad+\sup_{\nu\in[0,1]}\big\|D_3F(\wt,\bar y(t)+\nu(y(t)-\bar y(t)),
\bar w(t)+\nu(w(t)-\bar w(t)))\\[-.1cm]
&\quad\quad\qquad\qquad-D_3F(\wt,\bar y(t),\bar w(t))\big\|_{\Lin(\R^{n}, \R^{n})}
\,|w(t)-\bar w(t)|\\
&\quad\quad+\big\|D_3F(\wt,\bar y(t),\bar w(t))\big\|_{\Lin(\R^{n}, \R^{n})}
\big|y(t-\tau(\wt,u(t)))-y(t-\tau(\wt,\bar{u}(t)))\\
&\qquad\qquad\qquad\qquad\qquad\qquad\quad
+\dot{\bar{y}}(t-\tau(\wt,\bar{u}(t)))\cdot D_2\tau(\wt,\bar{u}(t))(u(t)-\bar{u}(t))\big|\,.
\end{split}
\]
Let us fix $\ep\in(0,(4/3)\n{D_3F}_0]$. The last sum has three terms.
Each of the two first ones is bounded by $(\ep/4)\,\n{u(t)-\bar u(t)}_C$ for $t\in[0,T]$.
In order to check this assertion, note that
\[
\begin{split}
 |y(t)-\bar y(t)|&\le \n{u(t)-\bar u(t)}_C \qquad\text{\;\;for $t\ge 0$}\,,\\
 |w(t)-\bar w(t)|&\le (1+|F|_0L_1^0)\,\n{u(t)-\bar u(t)}_C \qquad\text{\;\;for $t\ge 0$}\,.\\
\end{split}
\]
The first inequality is obvious. To prove the second one, use \eqref{4.paluego} to check that
\[
\begin{split}
 |w(t)-\bar w(t)|&\le |y(t-\tau(\wt,u(t)))-\bar y(t-\tau(\wt,u(t)))|\\
 &\quad+|\bar y(t-\tau(\wt,u(t)))-\bar y(t-\tau(\wt,u(t)))|\\
 &\le \n{u(t)-\bar u(t)}_C+|F|_0L_1^0\,\n{u(t)-\bar u(t)}_C\,.
\end{split}
\]
In addition, since Hypotheses~\ref{4.hipos} hold, there exists $\rho\in(0,1]$
such that, if $|\bar h|\le r_0$ and $|\bar k|\le r_0$ (as is the case of
$\bar y(t)$ and $\bar w(t)$, according to \eqref{4.cotayt}), and if
$|h-\bar{h}|<\rho$ (as it happens with $|\nu(y(t)-\bar y(t))|$ for
all $\nu\in[0,1]$ if $\n{u(t)-\bar u(t)}_C\le\rho$) and
$|k-\bar{k}|<\rho\,(1+|F|_0L_1^0)$ (as it happens with $|\nu(w(t)-\bar w(t))|$ for
all $\nu\in[0,1]$ if $\n{u(t)-\bar u(t)}_C\le\rho$), then, for every $\w\in\W$, it is
$\n{D_2F(\w, h, k)-D_2F(\w, \bar{h}, \bar{k})}_{\Lin(\R^{n}, \R^{n})}<
\ep/4$ and $\n{D_3F(\w, h, k)-D_3F(\w, \bar{h}, \bar{k})}_{\Lin(\R^{n}, \R^{n})}<
\ep/(4+4\,|F|_0L_1^0)\,$.
It follows easily that the assertion concerning the bound of the two first terms
is true if $\n{u(t)-\bar u(t)}_C<\rho$ for all $t\in[0,T]$.
\par
To bound the last term, note that
$\|D_3F(\wt,\bar y(t),\bar w(t))\big\|_{\Lin(\R^{n}, \R^{n})}\le\n{D_3F}_0$:
use \eqref{4.cotayt} and the definition of $\n{D_3F}_0$. In addition,
\[
\begin{split}
&\big|y(t-\tau(\wt,u(t))) - y(t-\tau(\wt,\bar{u}(t)))\\
&\qquad\qquad\qquad\qquad
+\dot{\bar{y}}(t-\tau(\wt,\bar{u}(t)))\pu D_2\tau(\wt,\bar{u}(t))(u(t) - \bar{u}(t))\big|\\
&\qquad\le \big|\bar{y}(t-\tau(\wt,u(t))) - \bar{y}(t-\tau(\wt,\bar{u}(t)))\\
&\qquad\qquad\qquad\qquad\qquad\quad\;
+\dot{\bar{y}}(t-\tau(\wt,\bar{u}(t)))\pu D_2\tau(\wt,\bar{u}(t))(u(t) - \bar{u}(t))\big|\\
&\qquad\quad+|\wit y(t-\tau(\wt,u(t)))-\wit y(t-\tau(\wt,\bar{u}(t)))|\,.
\end{split}
\]
Lemma \ref{4.lem1} provides $\delta_3\in(0,1]$ (irrespective of $\w,\,x,\,\bar x$
and $T$) such that, if $\n{u(t)-\bar u(t)}_C\le \delta_3$ for all $t\in[0,T]$,
then  the first term of the last sum is bounded by
$\big(\ep/(4\,\n{D_3F}_0)\big)\,\n{u(t)-\bar u(t)}_C$
for all $t\in[0,T]$.
In addition, Lemma \ref{4.lem2} ensures the existence of $\delta_4\in(0,1]$
(also irrespective of $\w,\,x,\,\bar x$ and $T$) such that,
if $\n{u(t)-\bar{u}(t)}_{C}\le\delta_4$ for all $t\in[0,T]$, then
\[
 |\wit y(t-\tau(\wt,u(t)))-\wit y(t-\tau(\wt,\bar{u}(t)))|\le\left\{\!\begin{array}{ll}
 2\,\n{u(t)-\bar{u}(t)}_{C}&\!\!\text{if}\;\,t\in[0,r]\\[.1cm]
 \Frac{\ep}{4\,\n{D_3F}_0}\:\n{u(t)-\bar{u}(t)}_{C}&\!\!\text{if}\;\,t\in[r,T]
 \end{array}
 \right.
\]
Altogether, if we take $\delta_5:=\min\,(\rho,\,\delta_3,\,\delta_4)\le 1$
and assume that $\n{u(t)-\bar u(t)}_C\le \delta_5$ for all $t\in[0,T]$, we have
\[
|g(\wt,\bar{u}(t), u(t))|\le\left\{
 \begin{array}{ll}
 \!\!\left(\Frac{3\,\ep}{4}+2\,\n{D_3F}_0\right)
 \n{u(t)-\bar{u}(t)}_{C}&\text{if}\;\;\;t\in[0,r]\,,\\[0.1cm]
 \ep\,\n{u(t)-\bar{u}(t)}_{C}&\text{if}\;\;\;t\in[r,T]\,;
 \end{array}
 \right.
\]
and, since $3\ep/4\le \n{D_3F}_0$, this proves the statement of the lemma.
\end{proof}
\par
We can finally prove the main theorem of this section.
\medskip\par\noindent
{\em Proof of Theorem~\ref{4.teo}}.
(1)$\Rightarrow$(2)  We consider the linear system
\begin{equation}\label{4.linhomeq}
\dot{y}(t)=L(\Pi(t,\w,\bar x))y_{t}\,,\quad t\geq0\,,
\end{equation}
where $L$ is defined by~\eqref{3.defL}.
Let $U(t,\w,\bar x)$ be the fundamental solution of \eqref{4.linhomeq} in the terms
given in Chapter 1 of \cite{havl}; i.e.,
for each $(\w,\bar x)\in \mK$ the $n\times n$ matrix-valued map
$t\to U(t,\w,\bar x)$ is a solution of $\dot U(t)=L(\Pi(t,\w,\bar x))U_t$
for $t\geq0$, and it satisfies
\begin{eqnarray*}
U(t,\w,\bar x)=\left\{\!
                    \begin{array}{ll}
                      I_n & \text{for $t=0$\,,}\\
                      0_n & \text{for $t\in[-r, 0)$}
                    \end{array}
                  \right.
\end{eqnarray*}
for all $(\w,\bar x)\in \mK$. Here $I_n$ and $0_n$ are the $n\times n$ identity and
zero matrices.
\par
We assume that $\lambda_{\mK}<0$, fix any $\beta\in(0,-\lambda_{\mK})$, and choose
$\alpha$ with $\beta<\alpha<-\lambda_{\mK}$. Theorem~\ref{3.igualexp} together with the
expression~\eqref{3.defwpiL} of the flow on $\mK\times C$ and relation
\eqref{2.lyap2} ensures the existence of a constant $k_0\ge 1$
such that $\n{\wit w(t,\w,\bar x, v)}_{C}=\n{\wit\pi_L(t,\w,x)\,v}_C\le
k_0\,e^{-\alpha t}\,\n{v}_{C}$ and $|U(t,\w,\bar x)\,c|\le k_0\,e^{-\alpha\,t}\,|c|$
for every $(\w,\bar x)\in \mK$, $v\in C$, $c\in\R^{n}$ and $t\geq0$.
\par
We fix $\ep>0$ small enough to apply Lemma \ref{4.lem3} and satisfying the additional
bound $0<\ep\,k_0\,e^{\beta r}/(\alpha-\beta)<1/2$. Let $\delta_5=\delta_5(\ep)$ be
the real number provided by  Lemma \ref{4.lem3}.
Recall that the functions $y(t)$ and $\bar y(t)$ are defined on $[-r,\beta_{\w,x})$.
We take $(\w,\bar x)\in \mK$ and $(\w,x)\in\W\times\WW$,
and use the notation~\eqref{4.notacion} from now on. It is easy to check that
$\wit y(t)=y(t)-\bar y(t)$ (which satisfies $\wit y_t=u(t)-\bar{u}(t)$) is a solution of the FDE
\begin{equation}\label{4.ecteor}
 \dot{\wit y}(t)=L(\Pi(t,\w,\bar x))\wit y_t+g(\wt,\bar{u}(t), u(t))
\end{equation}
for every $t\in[0,\beta_{\w,x})$, where $g$ is defined in the statement of Lemma \ref{4.lem3}.
We apply an adapted version of the variation of constants formula (see Section 2 of Chapter 6
of~\cite{havl}) in order to represent $\wit y(t)$ as
\[
 \wit y(t)\!=\!\left\{\!\!\begin{array}{ll}
 x(t)-\bar x(t)&\!\text{if $t\in[-r,0)$\,,}\\
 \wit z(t,\w,\bar x, x-\bar x)+\int_0^{t}U(t-s,\ws, \bar{u}(s))\,
 g(\ws, \bar{u}(s), u(s))\,ds&
 \!\text{if $t\ge 0$}\,,\end{array}\right.
\]
where $\wit{z}(t,\w,\bar x, v)$ is the solution of \eqref{3.ecvar} with
initial condition $v\in C$.
\par
We begin by considering the case $t\in[0,r]$. Let us assume that $\n{x-\bar x}_{C}<\delta_5$
(later we will assume a stronger condition) and define
$t_1:=\sup\{t\in[0,r]\,|\;\n{u(s)-\bar u(s)}_C<
\delta_5\text{ for all }s\in[0,t]\}$. Note that
$0<t_1\le\min(\beta_{\w,x},r)$. Applying Lemma~\ref{4.lem3} we have
\[
 |\wit y(t)|\le k_0\,e^{-\alpha\,t}\,\n{x-\bar x}_C+3\,k_0\,\n{D_3F}_0\!\int_0^{t}
 e^{-\alpha(t-s)}\,\n{\wit y_s}_{C}\,ds \quad\text{for $t\in[0,t_1]$}\,,
\]
and hence
\[
 e^{\alpha t}\,|\wit y(t)|\le k_0\,
 \n{x-\bar x}_C+3\,k_0\,\n{D_3F}_0\!\int_0^{t}e^{\alpha s}\,\n{\wit y_s}_{C}\,ds
 \quad\text{for $t\in[0,t_1]$}\,.
\]
Let us define $r_1(t):=\sup\{e^{\alpha s}\,\n{\wit y_s}_{C}\,|\;0\le s\le t\}$.
It is not hard to check that
\[
 r_1(t)\le e^{\alpha r}k_0\,\n{x-\bar x}_C
 +3\,e^{\alpha r}\,k_0\,\n{D_3F}_0\!\int_0^{t}r_1(s)\,ds
 \quad\text{for $t\in[0,t_1]$}\,.
\]
Using the Gronwall Lemma, we obtain
\[
 r_1(t)\le e^{\alpha r}\,k_0\,e^{3\,e^{\alpha r}\,k_0\,\n{D_3F}_0\,r}\,\n{x-\bar x}_C
 \quad\text{for $t\in[0,t_1]$}\,.
\]
Consequently,
\[
 e^{\alpha t}\,|\wit y(t)|=e^{\alpha t}\,|\wit y_t(0)|\le e^{\alpha t}\,
 \n{\wit y_t}_{C}\le r_1(t)\le k_1^1\,\n{x-\bar x}_C\quad\text{for $t\in[0,t_1]$}\,,
\]
where $k_1^1:=e^{\alpha r}\,k_0\,e^{3\,e^{\alpha r}\,k_0\,\n{D_3F}_0\,r}>k_0\ge 1$,
and hence
\[
 |\wit y(t)|\le k_1^1\,e^{-\alpha\,t}\,\n{x-\bar x}_C\quad\text{for $t\in[0,t_1]$}\,.
\]
Now we assume that $\n{x-\bar x}_{C}\le\delta_5/k_1^1<  \delta_5$. Then
$|\wit y(t))|<\delta_5$ for any $t\in[-r,t_1]$, so that
$\n{u(t)-\bar u(t)}_C<\delta_5$  for all $t\in[0,t_1]$. An easy contradiction argument shows
that $t_1=r$ (so, in particular, $\beta_{\w,x}\ge r$) and hence that
\begin{equation}\label{4.desteor1}
 |y(t,\w, x)-y(t,\w,\bar x)|=|\wit y(t)|\le k_1^1\,e^{-\alpha\,t}\,\n{x-\bar x}_C
 \quad\text{for $t\in[0,r]$}\,.
\end{equation}
In particular,
\begin{equation}\label{4.desteor2}
 \n{\wit y_t}_C\le e^{\alpha t}\n{\wit y_t}_C\le
 k_1^1\,\n{x-\bar x}_{C}<\delta_5\quad\text{for $t\in[0,r]$}.
\end{equation}
\par
Let us consider now the case $t\geq r$. We assume that
$\n{x-\bar x}_{C}\le\delta_5/k_1^1$ (later the condition will be stronger)
and define $t_2:=\sup\{t\ge r\,|\;\n{u(s)-\bar u(s)}_C<\delta_5\text{ for all }s\in[0,t]\}$,
which satisfies $r<t_2\le\beta_{\w,x}$: see \eqref{4.desteor2}.
Applying Lemma~\ref{4.lem3}, now for $r\le t\le t_2$, and using \eqref{4.desteor2},
\[
\begin{split}
 |\wit y(t)|&\le k_0\,e^{-\alpha\,t}\,\n{x-\bar x}_{C}+
 3\,k_0\,\n{D_3F}_0\!\int_0^{r}e^{-\alpha(t-s)}\n{\wit y_s}_{C}\,ds\\
 &\quad+k_0\,\ep\int_{r}^{t}e^{-\alpha(t-s)}\n{\wit y_s}_{C}\,ds\\
 &\le (k_0+3\,k_0\,k_1^1\,r\,\n{D_3F}_0)\,e^{-\alpha\,t}\,\n{x-\bar x}_{C}
 +k_0\,\ep\,e^{-\alpha t}\!\int_{r}^{t}e^{\alpha s}\n{\wit y_s}_{C}\,ds\,.
\end{split}
\]
Let us call $k_1^2:=k_0+3\,k_0\,k_1^1\,r\,\n{D_3F}_0$.
We multiply the previous inequality by $e^{\beta\,t}$, so that, since $e^{(\beta-\alpha)t}<1$,
\begin{equation}\label{4.desteor6}
e^{\beta\,t}\,|\wit y(t)|\le
k_1^2\,\n{x-\bar x}_C+
k_0\,\ep\,e^{-(\alpha-\beta)t}\!\int_{r}^{t}e^{(\alpha-\beta)s}\,e^{\beta s}\,\n{\wit y_s}_{C}\,ds\,.
\end{equation}
Now we $r_2(t):=\sup\{e^{\beta s}\,\n{\wit y_s}_{C}\,|\;r\le s\le t\}$
for $t\in[r,t_2]$ and distinguish two cases.
\par
In the first case, we assume that
$r_2(t)=e^{\beta s^{*}}\n{\wit y_{s^{*}}}_{C}$ for $s^{*}\in[r, 2\,r]$,
and there exists $\theta^{*}\in[-r, 0]$ with $s^{*}\!+\theta^{*}\in[0, r]$
such that  $r_2(t)=e^{\beta s^{*}}|\wit y(s^{*}\!+\theta^{*})|$.
Since $s^{*}\!+\theta^{*}\in[0, r]$, we can apply~\eqref{4.desteor2} to conclude that
\[
 r_2(t)=e^{\beta s^*}|\wit y(s^{*}\!\!+\theta^{*})|
 \le e^{2 r \beta}\,k_1^1\,\n{x-\bar x}_C\,.
\]
Consequently,
\begin{equation}\label{4.desteor3}
 |\wit y(t)|\le e^{-\beta t}\,r_2(t)\le e^{2 r \beta}k_1^1\,e^{-\beta t}\,\n{x-\bar x}_C\,.
\end{equation}
\par
In the second case, which exhausts the possibilities,
$r_2(t)=e^{\beta s^{*}}\n{\wit y_{s^{*}}}_{C}$ for $s^{*}\in[r, t]$,
and there exists $\theta^{*}\in[-r, 0]$ with $s^{*}\!+\theta^{*}>r$ and
$r_2(t)=e^{\beta s^{*}}|\wit y(s^{*}+\theta^{*})|$.
We denote $\theta=s^{*}\!+\theta^{*}$, so that $s^*\le\theta+r$. Then, using \eqref{4.desteor6},
\[
\begin{split}
r_2(t)&=e^{\beta s^{*}}\,|\wit y(\theta)|\le e^{\beta r}e^{\beta\,\theta}\,|\wit y(\theta)|\\
&\le k_1^2\,e^{\beta r}\,\n{x-\bar x}_C
+k_0\,\ep\,e^{\beta r}\,e^{-(\alpha-\beta)\theta}\!\int_{r}^{\theta}
e^{(\alpha-\beta)s}e^{\beta s}\n{\wit y_s}_{C}\,ds\\
&\le k_1^2\,e^{\beta r}\,\n{x-\bar x}_C
+k_0\,\ep\,e^{\beta r}\,e^{-(\alpha-\beta)\theta}\,r_2(t)\!
\int_{r}^{\theta}e^{(\alpha-\beta)s}\,ds\\
&\le k_1^2\,e^{\beta r}\,\n{x-\bar x}_C
+\Frac{k_0\,\ep\,e^{\beta r}}{\alpha-\beta}\;r_2(t)\,.
\end{split}
\]
Thus, due to the choice of $\ep$,
\[
 \frac{1}{2}\;r_2(t)\le \left(1-\Frac{k_0\,\ep\,e^{\beta r}}{\alpha-\beta}\right)
 r_2(t)\le k_1^2\,e^{\beta r}\,\n{x-\bar x}_C.
\]
It follows easily that
\begin{equation}\label{4.desteor4}
 |\wit y(t)|\le e^{-\beta t}\,r_2(t)\le 2\,k_1^2\,e^{\beta r}e^{-\beta t}\,\n{x-\bar x}_C\,.
\end{equation}
Let us take $k_1:=\max\{e^{2 r \beta}\,k_1^1,\,2\,k_1^2\,e^{\beta r}\}>k_1^1$,
and take $\n{x-\bar x}_C\le\delta_5/k_1<\delta_5/k_1^1<\delta_5$. Then, using
\eqref{4.desteor1}, \eqref{4.desteor3} and~\eqref{4.desteor4}, we have
\begin{equation}\label{4.desteor5}
 |y(t,\w, x)-y(t,\w,\bar x)|=|\wit y(t)|\le k_1\,e^{-\beta\,t}\,\n{x-\bar x}_{C}<\delta_5
 \quad\text{for $t\in[0,t_2]$}\,.
\end{equation}
As before, an easy contradiction argument shows that $t_2=\infty$, and hence $\beta_{\w,x}=\infty$.
Let us define $\delta_1:=\delta_5/k_1$. The bound~\eqref{4.desyteor} follows
from this fact and \eqref{4.desteor5} for $t\ge 0$, and is trivial for
$t\in[-r,0]$ (since $k_1\ge 1)$.
\par
Finally, it is obvious that $u(t,\w,x)$ is defined for $t\in[0,\infty)$. The
bound~\eqref{4.desuteor} follows almost immediately from~\eqref{4.desyteor} and from
the definition of $\n{u(t)-\bar{u}(t)}_{C}$.
\smallskip\par
(2)$\Rightarrow$(3) We assume that (2) holds, take $(\w,\bar x)\in \mK$ and $x\ne\bar x$ with
$\n{x-\bar x}_C\le\delta_1$, and use again the notation~\eqref{4.notacion}.
\par
Let us fix $\ep>0$ small enough to apply Lemma \ref{4.lem3} (that is,
$\ep\le (4/3)\,\n{D_3F}_0$), and denote by
$\delta_5>0$ the constant that this lemma provides. We define
$\delta_2:=\min(\delta_1,\delta_5/k_1)$ (where $k_1$ is the constant appearing in (2))
and assume that
$\n{x-\bar x}_{\WW}\le\delta_2$. Then, according to~\eqref{4.desuteor},
$\n{u(t)-\bar u(t)}_C\le k_1e^{\beta r}\,\n{x-\bar x}_{C}
\le k_1e^{\beta r}\,\n{x-\bar x}_{\WW}\le\delta_5$ for $t\ge 0$,
and hence Lemma \ref{4.lem3} ensures that
\[
 |g(\wt,\bar{u}(t), u(t))|\le 3\,\n{D_3F}_0\,\n{u(t)-\bar{u}(t)}_{C} \quad\text{for $t\ge 0$}\,.
\]
\par
Recall now that~\eqref{4.ecteor} holds. We define $k_3:=
\max\{\n{L(\w,\bar x)}_{\Lin(C, \R^{n})}\,|\;(\w,\bar x)\in \mK\}$ and
use~\eqref{4.desuteor} to see that
\begin{equation}\label{4.desteor8}
\begin{split}
|\dot{y}(t,\w,x)-\dot{y}(t,\w,\bar x)|&\le (k_3+3\,\n{D_3F}_0)\,\n{u(t)-\bar{u}(t)}_{C}\\
&\le (k_3+3\,\n{D_3F}_0)\,k_1e^{\beta r}e^{-\beta\,t}\,\n{x-\bar x}_{C}\\
&\le (k_3+3\,\n{D_3F}_0)\,k_1e^{\beta r}e^{-\beta\,t}\,\n{x-\bar x}_{\WW}
\end{split}
\end{equation}
for every $t\geq0$. Now we define $k_4:=(k_3+3\,\n{D_3F}_0+1)\,k_1e^{\beta r}$ and combine
\eqref{4.desuteor},~\eqref{4.desteor8}, and the definition of $\n{u(t)-\bar{u}(t)}_{\WW}$
to conclude that
\[
 \n{u(t)-\bar{u}(t)}_{\WW}\le
 k_4\,e^{\beta r}e^{-\beta\,t}\,\n{x-\bar x}_{\WW}
 \quad\text{for $t\ge 0$}.
\]
Therefore, the assertion in (3) holds for $k_2:=k_4\,e^{\beta r}$.
\smallskip\par
(3)$\Rightarrow$(1) Let us take $(\w,\bar x)\in \mK$, $v\in\WW$ and $t\geq0$.
Theorem~\ref{3.teorlin} ensures that
\[
 \n{u_x(t,\w,\bar x)\,v}_{\WW}=\lim_{h\to0}\Frac{\n{u(t,\w,\bar x+h\,v)-
 u(t,\w,\bar x)}_{\WW}}{|h|}\:.
\]
Take $|h|$ small enough to guarantee $\n{h\,v}_{\WW}\le\delta_2$,
with $\delta_2$ provided by (3). Then, $\n{u(t,\w,\bar x+h\,v)-
u(t,\w,\bar x)}_{\WW}\le k_2\,e^{-\beta\,t}\,\n{h\,v}_{\WW}=
k_2\,e^{-\beta\,t}\,|h|\,\n{v}_{\WW}$.
Making again use of Theorem~\ref{3.teorlin},
$\n{w(t,\w,\bar x,v)}_{\WW}=
\n{u_x(t,\w,\bar x)\,v}_{\WW}\le k_2\,e^{-\beta\,t}\,\n{v}_{\WW}$
which, according to \eqref{2.lyap*} (for the linearized semiflow~\eqref{3.weak-skewlin}),
ensures that $\lambda_{\mK}<0$ and completes the proof of this implication.
\smallskip\par
In order to check that last assertion of the theorem it is enough to have a look to the choice
of $\beta$ in the proof of (1)$\Rightarrow$(2), and observe that the value of $\beta$
in (3) is the same one as in (2).\qed
\section{Weakening the hypotheses}\label{sec5}
Let $\Pi$ be the semiflow defined on $\W\times\WW$ by~\eqref{3.defPi}
from the family \eqref{3.eq} of FDEs.
In this section we work under the following assumptions, which are less restrictive than
those of the preceding one:
\begin{hipos}\label{5.hipos}
Conditions \hyperlink{3.H1}{H1} and \hyperlink{3.H21}{H2} hold,
and $\mK\subset\W\times\WW$ is
a positively $\Pi$-invariant compact set projecting over the whole base and
such that each one of its elements admits at least a backward extension in $\mK$.
\end{hipos}
As in the preceding sections, the set $\mK$ will be fixed throughout most of this one.
The first purpose now is to adapt to this less restrictive setting the characterization
of the exponential stability of $\mK$ in terms of its upper Lyapunov exponent.
The difference with respect to Theorem~\ref{4.teo} relies on the second equivalent
condition, which characterizes the exponential stability in terms of
$\n{\pu }_C$ instead of $\n{\pu }_{\WW}$. To formulate it,
we call
\begin{equation}\label{5.defro0}
 \rho_0:=\sup\{\n{\bar x}_{\WW}\,|\;(\w,\bar x)\in\mK\}\,.
\end{equation}
\begin{teor}\label{5.teo}
Suppose that Hypotheses~\ref{5.hipos} hold.
and let $\lambda_\mK$ and $\rho_0$ be respectively
given by Definition \ref{3.defly} and \eqref{5.defro0}.
The following statements are equivalent:
\begin{itemize}
  \item[(1)] $\lambda_{\mK}<0$.
  \item[(2)] There exists $\beta>0$ satisfying the following property: if we fix $\rho>\rho_0$,
  there exist constants $k_1=k_1(\rho)>0$ and $\delta_1=\delta_1(\rho)>0$ such that,
  if $(\w,\bar x)\in \mK$ and $(\w,x)\in\W\times\WW$ satisfy
  $\n{x}_{\WW}\le\rho$ and $\n{x-\bar x}_{C}\le\delta_1$,
  then the function $y(t,\w,x)$ is defined for $t\in[-r,\infty)$ and
      \[
      \quad|y(t,\w, x)-y(t,\w,\bar x)|\le k_1e^{-\beta t}\,\n{x-\bar x}_{C}\quad
      \text{for all $t\geq -r$}\,,
      \]
  so that
      \[
      \quad\n{u(t,\w, x)-u(t,\w,\bar x)}_C\le k_1e^{\beta r}e^{-\beta t}\,
      \n{x-\bar x}_{C}\quad\text{for  all $t\geq 0$}\,.
      \]
  \item[(3)] The set $\mK$ is exponentially stable; i.e., there exist $\beta>0$, $k_2\geq1$,
  and $\delta_2>0$ such that, if $(\w,\bar x)\in \mK$ and $(\w,x)\in\W\times\WW$
  satisfy $\n{x-\bar x}_{\WW}<\delta_2$, then the function $u(t,\w, x)$ is defined for
  $t\in[0,\infty)$, and
      \[
      \qquad \n{u(t,\w, x)-u(t,\w,\bar x)}_{\WW}\le k_2\,e^{-\beta t}\,
      \n{x-\bar x}_{\WW}\quad\text{for all $t\geq0$}\,.
      \]
  \end{itemize}
In addition, if\/ {\rm(1) holds}, we can take any $\beta\in(0,-\lambda_\mK)$ in {\rm(2)} and
{\rm(3)} (by changing the constants $\delta_1, k_1,\delta_2$ and $k_2$ if required).
\end{teor}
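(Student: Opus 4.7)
The proof follows the blueprint of Theorem~\ref{4.teo}: I would again prove (1)$\Rightarrow$(2)$\Rightarrow$(3)$\Rightarrow$(1) via the three auxiliary results Lemmas~\ref{4.lem1}, \ref{4.lem2} and~\ref{4.lem3}. The only genuine new obstacle is that, under Hypotheses~\ref{5.hipos}, condition H2(2) (and its consequence H2(3)) supplies Lipschitz constants for $\tau$ and $D_2\tau$ only on compact subsets of $\W\times C$, not uniform constants on bounded sets of $C$ as in Section~\ref{sec4}. The strategy is therefore to confine all trajectories entering the argument to a single compact subset $\mM_\rho\subset\W\times C$ depending only on the parameter $\rho>\rho_0$ appearing in statement~(2), and then to invoke H2 on that set.

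For (1)$\Rightarrow$(2) with $\rho>\rho_0$ fixed, I would build $\mM_\rho$ in two pieces. For $t\in[0,r]$ and $\n{x}_\WW\le\rho$, the identity $u(t,\w,x)(s)=x(t+s)$ for $s<-t$, combined with an a priori bound $|\dot y(t,\w,x)|\le |F|_\rho$ on the ball of radius $\rho+1$ provided by~\eqref{3.dery}, forces $\n{u(t,\w,x)}_\WW$ to remain bounded by a constant $\rho_1=\rho_1(\rho)$; the associated set of pairs $(\wt,u(t,\w,x))$ is then relatively compact in $\W\times C$, since $\WW$-bounded sets are relatively compact in $C$ by Arzel\'a--Ascoli. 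For $t\ge r$, the exponential bound we are trying to establish will force $(\wt,u(t,\w,x))$ into an arbitrarily small $C$-neighbourhood of the compact set $\mK$. Taking $\mM_\rho$ to be the closure in $\W\times C$ of the union of these two parts, H2 now produces finite Lipschitz constants $L_1(\rho), L_2(\rho)$, and the auxiliary quantities $|F|_0$, $\n{D_2F}_0$, $\n{D_3F}_0$, $\n{D_2\tau}_0$ of Section~\ref{sec4} can all be reinstated as suprema over $\mM_\rho$.

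With these uniform constants in hand, Lemmas~\ref{4.lem1}--\ref{4.lem3} transfer verbatim (with $\W\times\mB_0$ replaced by $\mM_\rho$), and the Gronwall plus variation-of-constants chain of estimates in the proof of (1)$\Rightarrow$(2) of Theorem~\ref{4.teo} delivers the desired $\beta\in(0,-\lambda_\mK)$ together with $k_1(\rho)$ and $\delta_1(\rho)$. The only care needed is a standard bootstrap: $\delta_1(\rho)$ must be chosen small enough that the $C$-difference $\n{u(t)-\bar u(t)}_C$ never exceeds the $\delta_5$-threshold of Lemma~\ref{4.lem3}, thereby certifying that $(\wt,u(t,\w,x))$ never escapes $\mM_\rho$ and that $\beta_{\w,x}=\infty$. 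The implications (2)$\Rightarrow$(3) and (3)$\Rightarrow$(1) require essentially no change: (3)$\Rightarrow$(1) only uses Theorem~\ref{3.teorlin} and the identity~\eqref{2.lyap*}, which are insensitive to the relaxation of H2; and for (2)$\Rightarrow$(3) one picks $\rho:=\rho_0+\delta_2$, so that $\n{x-\bar x}_\WW<\delta_2$ automatically yields $\n{x}_\WW\le\rho$, making statement~(2) applicable and producing the $C$-decay that Lemma~\ref{4.lem3} then upgrades to $\WW$-decay via~\eqref{4.ecteor}, exactly as in Section~\ref{sec4}. The main subtle point throughout is the bookkeeping that certifies $(\wt,u(t,\w,x))$ remains in $\mM_\rho$ for all $t\geq 0$, since it is only on this compact arena that the weaker hypothesis H2 supplies the Lipschitz estimates required by the three lemmas; this is precisely the reason statement~(2) must make the $\rho$-dependence of the constants explicit.
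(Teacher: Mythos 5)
Your proposal follows the paper's own route: the paper likewise reduces Theorem~\ref{5.teo} to a rerun of the Section~\ref{sec4} machinery (restated as Lemmas~\ref{5.lem1}--\ref{5.lem3}) after first confining everything to a compact subset of $\W\times C$ on which the merely local Lipschitz hypotheses \hyperlink{3.H22}{H2(2)}--\hyperlink{3.H23}{(3)} supply uniform constants; concretely the paper works inside $\W\times\mB_{\rho^*}$ with $\rho^*=r_0+|F|_0+\rho_0+\rho$, using that closed $\WW$-balls are compact in $C$. One detail in your construction of $\mM_\rho$ needs repair: for $t\ge r$ you justify relative compactness by saying the orbit is forced into an arbitrarily small $C$-neighbourhood of $\mK$, but the closure in $\W\times C$ of such a neighbourhood is \emph{not} compact, since $C$ is infinite dimensional. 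The correct reason --- and the content of the paper's preparatory Lemma~\ref{5.lemaprevio} --- is exactly the mechanism you already invoke on $[0,r]$: as long as the bootstrap keeps $\n{u(t)-\bar u(t)}_C\le 1$, the relation \eqref{3.dery} bounds the derivative of each segment by $|F|_0$ (and by $\rho$ on the portion inherited from $\dot x$), so $\n{u(t)}_{\WW}\le r_0+|F|_0+\rho$ uniformly in $t\in[0,T]$, and Arzel\'{a}--Ascoli then gives the compact arena for all times at once, with no need to split at $t=r$. With that substitution your argument --- including the choice $\rho=\rho_0+\delta_2$ that makes statement (2) applicable in (2)$\Rightarrow$(3), and the unchanged (3)$\Rightarrow$(1) via Theorem~\ref{3.teorlin} and \eqref{2.lyap*} --- coincides with the paper's proof.
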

The proof of this theorem reproduces basically
that of Theorem~\ref{4.teo}. It is also based on three lemmas (Lemmas \ref{5.lem1},
\ref{5.lem2} and \ref{5.lem3}), whose statements are very similar to those of Section~\ref{sec4}
and whose proofs are almost identical. Just a little of previous work is required
in order to adapt everything to the less restrictive hypotheses we are considering now.
Given any $\gamma>0$, we denote
\[
 \mB_{\gamma}:=\{x\in\WW\,|\;\n{x}_{\WW}\le\gamma\}\,,
\]
which is a compact subset of $C$,
and represent by $L_1^{\gamma}$ and $L_2^{\gamma}$ the Lipschitz constants of
the functions $\tau$ and $D_2\tau$ on $\W\times \mB_{\gamma}$,
respectively provided by conditions \hyperlink{3.H21}{H2(3)} and \hyperlink{3.H21}{H2(2)}.
As in Section~\ref{sec4},
we take
\[
 r_0:=1+\sup\{\n{\bar x}_{C}\,|\;(\w,\bar x)\in \mK\}\,,
\]
and define
\[
\begin{split}
 |F|_0&:=\sup\{|F(\w, h, k)|\;|\;\w\in\W\,,\,|h|\le r_0,\,|k|\le r_0\}\,,\\
 \n{D_2F}_0&:=\sup\{\n{D_2F(\w, h, k)}_{\Lin(\R^{n}, \R^{n})}\,|\;
 \w\in\W\,,\,|h|\le r_0,\,|k|\le r_0\}\,,\\
 \n{D_3F}_0&:=\sup\{\n{D_3F(\w, h, k)}_{\Lin(\R^{n}, \R^{n})}\,|\;
 \w\in\W,\,|h|\le r_0,\,|k|\le r_0\}\,.
\end{split}
\]
Now we fix $\rho>\rho_0$ and define
\[
\begin{split}
 \rho^*&:=r_0+|F|_0+\rho_0+\rho\,,\\
 \n{D_2\tau}_0&:=\sup\{\n{D_2\tau(\w,\bar x+x)}_{\Lin(C, \R)}\,|
 (\w,\bar x)\in \mK,\,\n{x}_{\WW}\le \rho^*\}\,.
\end{split}
\]
To check that $\n{D_2\tau}_0<\infty$, we note that it
agrees with the supremum of $D_2\tau$ on a relatively compact
subset of $\W\times C$, which is finite by condition \hyperlink{3.H21}{H2(2)}.
We assume without restriction
that $|F|_0$, $\n{D_2F}_0$, $\n{D_3F}_0$, and
$\n{D_2\tau}_0$ are strictly positive.
\begin{lema}\label{5.lemaprevio}
Suppose that Hypotheses~\ref{5.hipos} hold, and fix $\rho>\rho_0$. We fix
$(\w,\bar x)\in \mK$ and $(\w,x)\in\W\times\mB_{\rho}$. Then,
\begin{align}
\n{\bar u(t)}_C\le r_0-1 &\quad\text{for $t\in[0,\infty)$}\,,\label{5.cotayt1}
\\[.15cm]
\n{\bar{u}(t)}_{\WW}\le\rho_0 &\quad \text{for $t\in[0,\infty)$}\,;\label{5.cotayt2}
\end{align}
and if
\begin{equation}\label{5.cota1}
\n{u(t)-\bar u(t)}_C\le 1\quad\text{\;\;for all $t\in[0,T]$}
\end{equation}
for a time $T\in(0,\beta_{\w,x})$, then
\begin{align}
\n{u(t)}_C\le r_0 &\quad\text{for $t\in[0,T]$}\,,\label{5.cotayt3}\\[.15cm]
\n{u(t)}_{\WW}\le r_0+|F|_0+\rho &\quad\text{for $t\in[0, T]$}\,,
\label{5.cotayt4}\\[0.15cm]
\n{u(t)-\bar{u}(t)}_{\WW}\le\rho^{*} &\quad\text{for $t\in[0, T]$}\,.\label{5.cotayt5}
\end{align}
The notation~\eqref{4.notacion} is used in this statement.
\end{lema}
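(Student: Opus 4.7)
I plan to prove the five assertions in order, with (5.cotayt4) being the only one requiring a somewhat subtle observation; the rest are immediate consequences of positive $\Pi$-invariance and the triangle inequality.

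For (5.cotayt1) and (5.cotayt2), I will use the positive $\Pi$-invariance of $\mathcal{K}$: since $(\w,\bar x)\in\mathcal{K}$, the point $(\wt,\bar u(t))$ lies in $\mathcal{K}$ for every $t\geq 0$, so $\|\bar u(t)\|_C\leq\sup_{(\w',\bar x')\in\mathcal{K}}\|\bar x'\|_C=r_0-1$ and $\|\bar u(t)\|_{\WW}\leq\rho_0$ by the very definitions of $r_0$ and $\rho_0$. Bound (5.cotayt3) then follows from the hypothesis \eqref{5.cota1} and the triangle inequality: $\|u(t)\|_C\leq\|u(t)-\bar u(t)\|_C+\|\bar u(t)\|_C\leq 1+(r_0-1)=r_0$.

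The key step is (5.cotayt4). I will observe first the crucial fact that, although the Lipschitz norm of $x$ is only controlled by $\rho$, its sup norm is bounded by $r_0$: indeed $\|x\|_C\leq\|x-\bar x\|_C+\|\bar x\|_C\leq 1+(r_0-1)=r_0$, because $\|x-\bar x\|_C\leq \|u(0)-\bar u(0)\|_C\le 1$ from \eqref{5.cota1} at $t=0$. Consequently, the values of $y(\cdot,\w,x)$ are bounded by $r_0$ on all of $[-r,T]$: on $[-r,0]$ the function coincides with $x$, and on $[0,T]$ it is bounded by $r_0$ by (5.cotayt3). Now $\dot u(t)(s)=\dot y(t+s,\w,x)$ for almost every $s\in[-r,0]$, and I split the interval $[t-r,t]$ into its intersection with $[-r,0]$ and with $[0,T]$. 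On the first piece $\dot y=\dot x$, so $|\dot y|\leq\|\dot x\|_{L^\infty}\leq\|x\|_{\WW}\leq\rho$. On the second piece, \eqref{3.dery} together with the fact that both arguments $y(t')$ and $y(t'-\tau(\wt,u(t')))$ have modulus at most $r_0$ yields $|\dot y(t')|\leq|F|_0$. Hence $\|\dot u(t)\|_{L^\infty}\leq|F|_0+\rho$, and combining with $\|u(t)\|_C\leq r_0$ gives $\|u(t)\|_{\WW}\leq\max(r_0,|F|_0+\rho)\leq r_0+|F|_0+\rho$.

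Finally, (5.cotayt5) is an immediate triangle-inequality consequence of (5.cotayt4) and (5.cotayt2):
\[
\|u(t)-\bar u(t)\|_{\WW}\leq\|u(t)\|_{\WW}+\|\bar u(t)\|_{\WW}\leq(r_0+|F|_0+\rho)+\rho_0=\rho^{*}.
\]
The only point that is not entirely mechanical is the bootstrapping used in (5.cotayt4): one must notice that the \emph{sup norm} of $x$ is automatically controlled by $r_0$ (a consequence of the $C$-smallness hypothesis applied at $t=0$), even while $\|x\|_{\WW}$ may be as large as $\rho$. This is what makes the constant $|F|_0$ available for the bound on $\dot y$ on $[0,T]$; the $\rho$ in the final estimate comes only from the initial-segment contribution $\|\dot x\|_{L^\infty}$.
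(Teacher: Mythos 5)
Your proof is correct and follows essentially the same route as the paper's: invariance of $\mK$ for \eqref{5.cotayt1}--\eqref{5.cotayt2}, the triangle inequality for \eqref{5.cotayt3} and \eqref{5.cotayt5}, and for \eqref{5.cotayt4} the same splitting of $\dot u(t)(s)$ according to whether $t+s\le 0$ (where $|\dot x|\le\rho$) or $t+s\ge 0$ (where \eqref{3.dery} and the $r_0$-bound on the values of $y$ give $|F|_0$). The ``crucial observation'' you highlight ($\n{x}_C\le r_0$) is just \eqref{5.cotayt3} evaluated at $t=0$, so no new idea is needed there.
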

\begin{proof}
Note that $(\wt,\bar u(t))\in\mK$ for all $t\ge 0$.
The inequalities \eqref{5.cotayt1} and~\eqref{5.cotayt2}
follow from this fact and the definitions of
$r_0$ and $\rho_0$. We assume~\eqref{5.cota1}, which together with \eqref{5.cotayt1}
ensures \eqref{5.cotayt3}. Before proving \eqref{5.cotayt4}, note that
\eqref{5.cotayt5} follows immediately from \eqref{5.cotayt2},
\eqref{5.cotayt4}, and the definition of $\rho^*$.
\par
In order to prove \eqref{5.cotayt4}, we take $t\in[0, T]$ and $s\in[-r, 0]$, and note that
\[
\dot{u}(t)(s)=\left\{
 \begin{array}{ll}
 \dot{y}(t+s) &\text{if $t+s\geq0$}\,,\\[0.15cm]
 \dot{x}(t+s) &\text{if $t+s\le0$}\,.
 \end{array}
 \right.
\]
If $t+s\le0$, then $|\dot{u}(t)(s)|=|\dot{x}(t+s)|\le\n{x}_{\WW}\le\rho$.
Assume now that $t+s\geq0$, so that $\dot{y}(t+s)=F(\w\pu(t+s),
y(t+s),y(t+s-\tau(\w\pu(t+s),y_{t+s})))$.
It follows from \eqref{5.cotayt3} that $|y(t+s)|\le r_0$ and
$|y(t+s-\tau(\w\pu(t+s), y_{t+s}))|\le r_0$. So, by definition of $|F|_0$,
we have $|\dot{y}(t+s)|\le |F|_0$. Hence
$\n{\dot{u}(t)}_{C}\le|F|_0+\rho$. Finally,
\eqref{5.cotayt3} yields
\[
 \n{u(t)}_{\WW}\le \n{u(t)}_{C}+\n{\dot{u}(t)}_{C}\le r_0+\n{\dot{u}(t)}_{C}\le
 r_0+|F|_0+\rho\,,
\]
as asserted.
\end{proof}
Now we give the statements of the lemmas which play, for the proof
of Theorem~\ref{5.teo}, the role played by Lemmas~\ref{4.lem1},
\ref{4.lem3} and \ref{4.lem2} in the proof of Theorem~\ref{4.teo}.
\begin{lema}\label{5.lem1}
Suppose that Hypotheses~\ref{5.hipos} hold, and fix $\rho>\rho_0$.
Then, for every $\ep>0$ there exists $\delta_3=\delta_3(\ep,\rho)\in(0,1]$ such that,
if $(\w,\bar x)\in \mK$, $(\w,x)\in\W\times\mB_{\rho}$, $T\in[0,\beta_{\w,x})$,
and $\n{u(t)-\bar{u}(t)}_{C}\le\delta_3$ for every $t\in[0, T]$, then
\[
\begin{split}
&\big|\bar{y}(t-\tau(\wt,u(t))) - \bar{y}(t-\tau(\wt,\bar{u}(t)))\\
&\qquad\qquad+\dot{\bar{y}}(t-\tau(\wt,\bar{u}(t)))\pu
D_2\tau(\wt,\bar{u}(t))(u(t) - \bar{u}(t))\big|\le\ep\,\n{u(t) - \bar{u}(t)}_{C}
\end{split}
\]
for every $t\in[0, T]$. The notation~\eqref{4.notacion} is used in this statement.
\end{lema}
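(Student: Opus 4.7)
The plan is to mirror the proof of Lemma~\ref{4.lem1} almost verbatim, but with all uniform constants supplied by the auxiliary Lemma~\ref{5.lemaprevio} rather than by the global Lipschitz structure of Section~\ref{sec4}. The key observation is that, although the new Hypotheses \hyperlink{3.H21}{H2} only give local Lipschitz constants on bounded subsets of $C$, Lemma~\ref{5.lemaprevio} confines the relevant trajectories to the bounded set $\mB_{\rho^{*}}\subset\WW$ (which is relatively compact in $C$), so the quantities $L_1^{\rho^*}$, $L_2^{\rho^*}$, and $\n{D_2\tau}_0$ are all finite and uniform along the piece of semiorbit we care about.

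More precisely, I would first reduce to the case $\n{u(t)-\bar u(t)}_C\le 1$ (absorbed into the eventual $\delta_3\le 1$), and apply Lemma~\ref{5.lemaprevio} to deduce that $\bar u(t)$, $u(t)$, and every convex combination $\bar u(t)+s(u(t)-\bar u(t))$ with $s\in[0,1]$ lies in $\mB_{\rho_0}\cup\mB_{\rho_0+\rho^*}\subseteq\mB_{\rho^*}$ relative to the appropriate base translates. Hence $|\tau(\wt,\bar u(t)+s(u(t)-\bar u(t)))-\tau(\wt,\bar u(t))|\le L_1^{\rho^*}\,\n{u(t)-\bar u(t)}_C$, and similarly for $D_2\tau$ with constant $L_2^{\rho^*}$; moreover every $D_2\tau$ term appearing below is bounded by $\n{D_2\tau}_0$, and $|\dot{\bar y}(s)|\le|F|_0$ for $s\ge 0$ by \eqref{3.dery} and~\eqref{5.cotayt1}.

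With these uniform constants in hand, I would set
\[
 p_{\,t}(s):=\bar y(t-\tau(\wt,\bar u(t)+s(u(t)-\bar u(t))))
 +s\,\dot{\bar y}(t-\tau(\wt,\bar u(t)))\pu D_2\tau(\wt,\bar u(t))(u(t)-\bar u(t))
\]
exactly as in Lemma~\ref{4.lem1}, note that $p_t(1)-p_t(0)$ is precisely the quantity to be bounded, differentiate, and apply the mean value theorem to reduce to controlling $|\dot p_{\,t}(s)|$. The chain-rule computation gives the same two terms as before: one proportional to $\n{D_2\tau}_0\,|\dot{\bar y}(t-\tau(\wt,\bar u(t)))-\dot{\bar y}(t-\tau(\wt,\bar u(t)+s(u(t)-\bar u(t))))|$, and one proportional to $|F|_0\,\n{D_2\tau(\wt,\bar u(t))-D_2\tau(\wt,\bar u(t)+s(u(t)-\bar u(t)))}_{\Lin(C,\R)}$. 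The second is bounded by $|F|_0\,L_2^{\rho^*}\,\n{u(t)-\bar u(t)}_C$, which can be made $\le\ep/2$ by taking $\n{u(t)-\bar u(t)}_C\le\ep/(2L_2^{\rho^*}|F|_0)$.

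The step I expect to be the main (but only) genuine obstacle is the first term, namely the uniform continuity of $\dot{\bar y}$ along the argument. In Lemma~\ref{4.lem1} this came from the uniform continuity of $\Pi\colon[0,1]\times\mK\to\mK$ applied to a point $(\w^*,\bar x^*)\in\mK$ with $\Pi(r,\w^*,\bar x^*)=(\w,\bar x)$, using the $\WW$-continuity given by Theorem~\ref{3.flujocontinuo}(vi)/(ix). This argument still works here: since $(\w,\bar x)\in\mK$ admits a backward extension in $\mK$ (Hypotheses~\ref{5.hipos}), such a $(\w^*,\bar x^*)$ exists, and the restricted semiflow on $\mK$ is continuous in the $\WW$-norm and uniformly continuous on $[0,1]\times\mK$ by compactness. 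Therefore, given any $\rho_\ep\in(0,1]$, there is a modulus of continuity guaranteeing $|\dot{\bar y}(t_2)-\dot{\bar y}(t_1)|\le\ep/(2\,\n{D_2\tau}_0)$ whenever $-r\le t_1<t_2\le t_1+\rho_\ep$, uniformly in $(\w,\bar x)\in\mK$. Choosing $\delta_3:=\min(1,\,\rho_\ep/L_1^{\rho^*},\,\ep/(2L_2^{\rho^*}|F|_0))$ and requiring $\n{u(t)-\bar u(t)}_C\le\delta_3$ for all $t\in[0,T]$ then forces both chain-rule terms to be small, completing the proof.
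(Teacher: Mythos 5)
Your proof is correct and is exactly the adaptation the paper intends: the proof of Lemma~\ref{5.lem1} is omitted there precisely because it is the proof of Lemma~\ref{4.lem1} with the global constants replaced by those furnished by Lemma~\ref{5.lemaprevio} together with the compactness of $\mB_{\rho^*}$ in $C$ (which makes \hyperlink{3.H21}{H2(2)--(3)} yield uniform Lipschitz constants there), and your treatment of the uniform continuity of $\dot{\bar y}$ via the backward extension and the continuity of $\Pi$ on $[0,1]\times\mK$ is the same as in Section~\ref{sec4}. The only slip is the inclusion $\mB_{\rho_0}\cup\mB_{\rho_0+\rho^*}\subseteq\mB_{\rho^*}$, which is false as written; the fact you actually need is nevertheless true, since $\bar u(t)+s(u(t)-\bar u(t))$ is a convex combination of $\bar u(t)\in\mB_{\rho_0}$ and $u(t)\in\mB_{r_0+|F|_0+\rho}$, both contained in $\mB_{\rho^*}$ by \eqref{5.cotayt2} and \eqref{5.cotayt4}.
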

\begin{lema}\label{5.lem2}
Suppose that Hypotheses~\ref{5.hipos} hold, and fix $\rho>\rho_0$.
Then, for every $\ep>0$ there exists $\delta_4=\delta_4(\ep,\rho)\in(0,1]$
such that, if $(\w,\bar x)\in \mK$, $(\w,x)\in\W\times\mB_{\rho}$, $T\in[0,\beta_{\w,x})$,
and $\n{u(t)-\bar{u}(t)}_{C}\le\delta_4$ for every
$t\in[0, T]$, then
\[
 |\wit y(t-\tau(\wt,u(t)))-\wit y(t-\tau(\wt,\bar{u}(t)))|\le\left\{\!
 \begin{array}{ll}
 2\,\n{u(t)-\bar{u}(t)}_{C}&\text{if}\;\;\;t\in[0,r]\,,\\[0.1cm]
 \ep\,\n{u(t)-\bar{u}(t)}_{C}&\text{if}\;\;\;t\in[r,T]\,.
 \end{array}
 \right.
\]
The notation~\eqref{4.notacion} is used in this statement.
\end{lema}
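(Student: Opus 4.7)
The proof will parallel that of Lemma \ref{4.lem2} almost verbatim; the only substantive modification needed to adapt to the weaker Hypotheses \ref{5.hipos} is to replace the global Lipschitz constant $L_1^0$ of $\tau$ (available in Section \ref{sec4} because H2$^*$(2) provides a Lipschitz constant on any $C$-bounded set) by a Lipschitz constant extracted from H2(3) applied to an appropriate compact subset of $\W\times C$. That compact set will be manufactured via the $\WW$-bounds supplied by Lemma \ref{5.lemaprevio} together with Arzel\`a--Ascoli.

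First I would impose $\delta_4\le 1$, so that the hypothesis $\n{u(t)-\bar u(t)}_C\le\delta_4$ yields \eqref{5.cota1} and Lemma \ref{5.lemaprevio} applies on $[0,T]$; in particular $u(t),\bar u(t)\in\mB_{\rho^*}$ for all $t\in[0,T]$. Since $\mB_{\rho^*}$ is a bounded subset of $\WW$, Arzel\`a--Ascoli ensures that its $C$-closure $\overline{\mB_{\rho^*}}^{\,C}$ is compact in $C$, so $\mJ:=\W\times\overline{\mB_{\rho^*}}^{\,C}$ is a compact subset of $\W\times C$. Condition H2(3) then furnishes a constant $L_1^*=L_1(\mJ)>0$ with
\[
 |\tau(\wt,u(t))-\tau(\wt,\bar u(t))|\le L_1^*\,\n{u(t)-\bar u(t)}_C\quad\text{for all }t\in[0,T]\,.
\]

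For the case $t\in[0,r]$, both arguments $t-\tau(\wt,u(t))$ and $t-\tau(\wt,\bar u(t))$ lie in $[t-r,t]\subseteq[-r,r]$, and for every $s$ in this interval one has $\wit y(s)=(u(t)-\bar u(t))(s-t)$, so that $|\wit y(s)|\le\n{u(t)-\bar u(t)}_C$; the triangle inequality immediately delivers the factor $2$. For the case $t\in[r,T]$, I would invoke the mean-value estimate
\[
 |\wit y(t-\tau(\wt,u(t)))-\wit y(t-\tau(\wt,\bar u(t)))|\le\Big(\max_{s\in[t-r,t]}|\dot{\wit y}(s)|\Big)\,|\tau(\wt,u(t))-\tau(\wt,\bar u(t))|
\]
and estimate $|\dot{\wit y}(s)|=|\dot y(s)-\dot{\bar y}(s)|$ exactly as in Lemma \ref{4.lem2}, using \eqref{3.dery}, the uniform pointwise bounds $|y(s)|,|\bar y(s)|\le r_0$ granted by \eqref{5.cotayt1} and \eqref{5.cotayt3}, and the constants $\n{D_2F}_0$, $\n{D_3F}_0$, $|F|_0$ together with $L_1^*$. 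This produces $|\dot{\wit y}(s)|\le M\,\n{u(s)-\bar u(s)}_C$ on $[t-r,t]\subseteq[0,T]$ with $M:=\n{D_2F}_0+\n{D_3F}_0(1+|F|_0 L_1^*)$. Combining with the Lipschitz bound for $\tau$ and using $\n{u(s)-\bar u(s)}_C\le\delta_4$ for $s\in[0,T]$ gives the final estimate $M L_1^*\,\delta_4\,\n{u(t)-\bar u(t)}_C$, which is at most $\ep\,\n{u(t)-\bar u(t)}_C$ provided we set $\delta_4:=\min\bigl(1,\,\ep/(M L_1^*)\bigr)$.

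The only potential obstacle is the bookkeeping necessary to certify that every pair $(\wt,u(t)),(\wt,\bar u(t))$ actually lies in a single precompact subset of $\W\times C$ so that H2(3) delivers a uniform Lipschitz constant $L_1^*$ independent of $(\w,\bar x)\in\mK$, $(\w,x)\in\W\times\mB_\rho$ and $T\in[0,\beta_{\w,x})$; this is precisely what Lemma \ref{5.lemaprevio} was designed to supply, and once this step is verified the remainder of the argument is a direct transcription of the proof of Lemma \ref{4.lem2}.
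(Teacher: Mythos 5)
Your proposal is correct and follows essentially the same route as the paper: the proof of Lemma~\ref{5.lem2} is a transcription of that of Lemma~\ref{4.lem2}, with the single substantive change being the replacement of $L_1^0$ by the Lipschitz constant that \hyperlink{3.H23}{H2(3)} provides on the compact subset $\W\times\mB_{\rho^*}$ of $\W\times C$ (a $\WW$-ball being compact in $C$), which is exactly the role of Lemma~\ref{5.lemaprevio} and of the constants $L_1^{\gamma}$ introduced before the statement. Your bookkeeping — $\delta_4\le 1$ to activate \eqref{5.cota1}, the bounds \eqref{5.cotayt2} and \eqref{5.cotayt4} to place $u(t)$ and $\bar u(t)$ in $\mB_{\rho^*}$, and the final choice $\delta_4=\min(1,\ep/(ML_1^*))$ — matches the intended argument.
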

\begin{lema}\label{5.lem3}
Suppose that Hypotheses~\ref{5.hipos} hold, fix $\rho>\rho_0$, and define
$g\colon\mK\times\WW\to\R^{n}$ by
\[
 g(\w,\bar x, x):=F(\w,x(0), x(-\tau(\w,x)))-F(\w,\bar x(0),
 \bar x(-\tau(\w,\bar x)))-L(\w,\bar x)(x-\bar x)\,,
\]
where $L(\w,\bar x)$ is given by~\eqref{3.defL}.
Then, for every $\ep\in(0,(4/3)\n{D_3F}_0]$ there exists
$\delta_5=\delta_5(\ep,\rho)\in(0,1]$ such that,
if $(\w,\bar x)\in \mK$, $(\w,x)\in\W\times\mB_{\rho}$, $T\in[0,\beta_{\w,x})$,
and $\n{u(t)-\bar{u}(t)}_{C}\le\delta_5$ for every $t\in[0, T]$, then
\[
|g(\wt,\bar{u}(t), u(t))|\le\left\{
 \begin{array}{ll}
 3\,\n{D_3F}_0\,\n{u(t)-\bar{u}(t)}_{C}&\text{if}\;\;\;t\in[0,r]\,,\\[0.1cm]
 \ep\,\n{u(t)-\bar{u}(t)}_{C}&\text{if}\;\;\;t\in[r,T]\,.
 \end{array}
 \right.
\]
The notation~\eqref{4.notacion} is used in this statement.
\end{lema}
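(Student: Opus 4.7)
The plan is to mirror the proof of Lemma~\ref{4.lem3} essentially line for line, since the algebraic decomposition of $g$ is unchanged. The only substantive twist is that under H2 the Lipschitz constants for $\tau$ and $D_2\tau$ are only available on compact (rather than bounded-in-$C$) subsets of $\W\times C$, so I need a preliminary step that confines $\bar u(t)$ and $u(t)$ to a single compact subset of $\W\times\WW$ where uniform constants $L_1^{\rho^*}$ and $L_2^{\rho^*}$ apply. This is exactly what Lemma~\ref{5.lemaprevio} delivers, and it is the reason the hypothesis is $\n{x}_\WW\le\rho$ rather than only $\n{x}_C\le\rho$.

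Concretely, I first insist $\delta_5\in(0,1]$. Then \eqref{5.cota1} holds on $[0,T]$, and Lemma~\ref{5.lemaprevio} gives $\bar u(t),u(t)\in\mB_{\rho^*}$, together with the pointwise bounds $|y(t)|,|\bar y(t)|,|w(t)|,|\bar w(t)|\le r_0$ used throughout (where $w(t):=y(t-\tau(\wt,u(t)))$ and $\bar w(t):=\bar y(t-\tau(\wt,\bar u(t)))$). Next I decompose $|g(\wt,\bar u(t),u(t))|$ exactly as in Lemma~\ref{4.lem3}, using \eqref{4.int} applied to the second and third arguments of $F$, obtaining three terms: one controlled by the modulus of continuity of $D_2F$ times $|y(t)-\bar y(t)|\le\n{u(t)-\bar u(t)}_C$; one controlled by the modulus of continuity of $D_3F$ times $|w(t)-\bar w(t)|\le(1+|F|_0 L_1^{\rho^*})\n{u(t)-\bar u(t)}_C$ (the latter inequality coming from $|\bar y(\cdot)-\bar y(\cdot)|\le|F|_0 L_1^{\rho^*}\n{u(t)-\bar u(t)}_C$ as in \eqref{4.paluego}, where $L_1^{\rho^*}$ replaces $L_1^0$); and finally $\n{D_3F}_0$ times the ``compatibility'' quantity
\[
 \bigl|\,y(t-\tau(\wt,u(t)))-y(t-\tau(\wt,\bar u(t)))+\dot{\bar y}(t-\tau(\wt,\bar u(t)))\pu D_2\tau(\wt,\bar u(t))(u(t)-\bar u(t))\,\bigr|.
\]
For the first two terms, uniform continuity of $D_2F,D_3F$ on the compact set $\W\times[-r_0,r_0]^n\times[-r_0,r_0]^n$ produces some $\rho_1>0$ making each at most $(\ep/4)\n{u(t)-\bar u(t)}_C$ once $\delta_5$ is small enough that both $|\nu(y(t)-\bar y(t))|<\rho_1$ and $|\nu(w(t)-\bar w(t))|<\rho_1(1+|F|_0 L_1^{\rho^*})$ for $\nu\in[0,1]$. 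For the third term I split via triangle inequality into the ``smooth'' piece (bounded by Lemma~\ref{5.lem1} with prescribed rate $\ep/(4\n{D_3F}_0)$) plus $|\wit y(t-\tau(\wt,u(t)))-\wit y(t-\tau(\wt,\bar u(t)))|$ (bounded by Lemma~\ref{5.lem2}).

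Collecting: for $t\in[r,T]$ the three pieces sum to at most $(\ep/4+\ep/4+\ep/4+\ep/4)\n{u(t)-\bar u(t)}_C=\ep\,\n{u(t)-\bar u(t)}_C$, while for $t\in[0,r]$ Lemma~\ref{5.lem2} only yields the cruder bound $2\n{u(t)-\bar u(t)}_C$ on the $\wit y$ piece, giving a total of at most $(3\ep/4+2\n{D_3F}_0)\n{u(t)-\bar u(t)}_C$; the restriction $\ep\le(4/3)\n{D_3F}_0$ forces $3\ep/4\le\n{D_3F}_0$ and yields the asserted bound $3\n{D_3F}_0\,\n{u(t)-\bar u(t)}_C$. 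Choosing $\delta_5$ as the minimum of $1$, the threshold from uniform continuity of $D_iF$, and the constants $\delta_3(\ep/(4\n{D_3F}_0),\rho)$ and $\delta_4(\ep/(4\n{D_3F}_0),\rho)$ from Lemmas~\ref{5.lem1}--\ref{5.lem2} closes the argument. The main subtlety—rather than an obstacle—is the careful use of Lemma~\ref{5.lemaprevio} to make the various ``local'' Lipschitz and uniform-continuity constants truly uniform in $(\w,\bar x)\in\mK$ and $(\w,x)\in\W\times\mB_\rho$, and to justify the introduction of $L_1^{\rho^*}$ and $L_2^{\rho^*}$ (which replace the globally defined $L_1^0,L_2^0$ of Section~\ref{sec4}) at every place where the earlier proof invoked H2$^*$.
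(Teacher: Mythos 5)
Your proposal is correct and follows exactly the route the paper intends: it reproduces the proof of Lemma~\ref{4.lem3} verbatim in its decomposition of $g$ and its bookkeeping of the four $\ep/4$ contributions, with the only adaptation being the use of Lemma~\ref{5.lemaprevio} to place $u(t)$ and $\bar u(t)$ in the $C$-compact set $\mB_{\rho^*}$ so that the local Lipschitz constants $L_1^{\rho^*}$, $L_2^{\rho^*}$ of {\rm H2} replace $L_1^0$, $L_2^0$, and the invocation of Lemmas~\ref{5.lem1} and~\ref{5.lem2} in place of their Section~\ref{sec4} counterparts. This matches the paper, which states that the proofs of Section~\ref{sec5} are ``almost identical'' to those of Section~\ref{sec4} modulo precisely this preliminary confinement step.
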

This completes the summary of ideas regarding the proof of Theorem~\ref{5.teo}.
\par
The following consequence of Theorem~\ref{5.teo} in the case of minimal base flow
will play a fundamental role in the rest of the paper.
Recall that the set $\mK$ is a {\em $k$-cover of $(\W,\sigma,\R)$} if each fiber
$\mK_\w:=\{x\in\WW\,|\;(\w,x)\in\mK\}$ contains exactly $k$ elements.
\begin{coro}\label{5.coro}
Suppose that the base flow $(\W,\sigma,\R)$ is minimal,
that Hypotheses~\ref{5.hipos} hold, and that $\lambda_{\mK}<0$. Then,
there exists $k\in\N$ such that $\mK$ is a $k$-cover of $(\W,\sigma,\R)$,
and the semiflow $(\mK,\Pi,\R^+)$ admits a flow extension.
In addition,
\begin{itemize}
\item[\rm(i)]
for each $\wit\w\in\W$ there exist a neighborhood $\mU_{\,\wit\w}\subset\W$ of
$\wit\w$ and
$k$ continuous maps $x_1,\ldots,x_k\colon\mU_{\,\wit\w}\to\WW$ such that
\begin{equation}\label{5.fibra}
 \mK_{\w}=\{\bar x\in\WW\,|\;(\w,\bar x)\in\mK\}=\{x_1(\w),\ldots,x_k(\w)\}
\end{equation}
for all $\w\in\mU_{\,\wit\w}$.
\item[\rm(ii)] The set $\mK$ is the disjoint union of a finite number of
minimal sets $\mM_1,\ldots,$ $\mM_l$, where $\mM_j$ is an exponentially stable
$m_j$-cover of the base for $j=1,\ldots,l$.
\end{itemize}
\end{coro}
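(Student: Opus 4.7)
My plan is to convert the exponential stability supplied by Theorem~\ref{5.teo}(3) into a rigid $k$-cover structure on $\mK$, and then to derive all of the stated conclusions from this structure.

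\emph{Stage 1 (uniform discreteness of fibers).} Let $\delta_2,k_2,\beta$ be the constants from Theorem~\ref{5.teo}(3), and write $\rho_0$ for the constant in~\eqref{5.defro0}. I intend to show that any two distinct points in the same fiber of $\mK$ are separated by at least $\delta_2$ in the $\WW$-norm. Given $(\w,x_1),(\w,x_2)\in\mK$ with $0<\|x_1-x_2\|_\WW<\delta_2$, I select backward extensions $\theta_i(-s)=(\sigma(-s,\w),\xi_i(s))$ in $\mK$, so that $\|\xi_i(s)\|_\WW\le\rho_0$ and $u(s,\sigma(-s,\w),\xi_i(s))=x_i$. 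Setting $D(s):=\|\xi_1(s)-\xi_2(s)\|_\WW$, Theorem~\ref{5.teo}(3) applied at base point $\sigma(-s,\w)$ yields, whenever $D(s)<\delta_2$,
\[
 \|x_1-x_2\|_\WW\le k_2\,e^{-\beta s}\,D(s)\le k_2\,\delta_2\,e^{-\beta s}.
\]
Letting $s\to\infty$ along times where $D(s)<\delta_2$ forces $x_1=x_2$. The main obstacle is therefore to preclude the alternative that $D(s)\ge\delta_2$ for all sufficiently large $s$; I plan to do this by combining the continuity of $D$ with $D(0)<\delta_2$, the forward uniqueness from Theorem~\ref{3.flujocontinuo}(i) (which guarantees $\xi_1(s)\ne\xi_2(s)$ for every $s>0$), and minimality of $(\W,\sigma,\R)$, using density of $\{\sigma(-s,\w):s\ge 0\}$ in $\W$ together with compactness of $\mK$ to produce arbitrarily large $s$ with $D(s)<\delta_2$.

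\emph{Stage 2 (constant cardinality, local sections, and flow extension).} Discreteness implies each $\mK_\w$ is finite, with cardinality uniformly bounded by total boundedness of $\mK$ in $\W\times\WW$. The function $\w\mapsto|\mK_\w|$ is upper semicontinuous (by uniform discreteness plus compactness of $\mK$) and lower semicontinuous (using backward extensions and the continuity properties of $\Pi|_\mK$ from Theorem~\ref{3.flujocontinuo}(vii) and (ix) to transport any point of $\mK_{\wit\w}$ backwards a small time and then forward into nearby fibers); being integer-valued on minimal $\W$, it is constant, say $|\mK_\w|\equiv k$. Uniform discreteness plus continuity of $\Pi|_\mK$ then yields, in a small neighborhood $\mU_{\wit\w}$ of any $\wit\w$, continuous labelings $x_1,\dots,x_k:\mU_{\wit\w}\to\WW$ of the $k$ fiber elements, proving~(i). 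For each $t\ge0$ the map $\Pi(t,\cdot)\colon\mK_\w\to\mK_{\wt}$ is a continuous surjection (Hypotheses~\ref{5.hipos} provide preimages via backward extensions) between two $k$-element sets, hence a bijection. Forward injectivity now forces uniqueness of backward extensions in $\mK$, and Theorem~II.2.3 of~\cite{shyi} delivers the continuous flow extension on $\mK$.

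\emph{Stage 3 (minimal decomposition).} Any minimal set $\mM\subset\mK$ satisfies Hypotheses~\ref{5.hipos} (by Proposition~II.2.1 of~\cite{shyi}) and has $\lambda_\mM\le\lambda_\mK<0$, so Stages~1--2 apply to $\mM$ and yield that it is an $m$-cover, exponentially stable by Theorem~\ref{5.teo}. The branches of each such $\mM_j$ form a subset of the $k$ branches of $\mK$, so $\sum_j m_j\le k<\infty$ and only finitely many minimal sets $\mM_1,\dots,\mM_l$ arise. Finally I claim $\mK=\bigsqcup_{j=1}^l\mM_j$: given $(\w,x)\in\mK$, its full orbit (well-defined by the flow extension) has a nonempty alpha-limit containing some minimal set $\mM_j$; choosing $t_n\to-\infty$ with $\Pi(t_n,\w,x)\to(\w^*,x^*)\in\mM_j$ and using the continuous local labeling of $\mM_j$ near $\w^*$, I obtain points of $\mM_j$ in the fiber $\mK_{\sigma(t_n,\w)}$ at $\WW$-distance less than $\delta_2$ from $u(t_n,\w,x)$ for large $n$. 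Stage~1 discreteness of $\mK$ then forces $\Pi(t_n,\w,x)\in\mM_j$, and invariance of $\mM_j$ under the flow extension gives $(\w,x)\in\mM_j$, completing~(ii).
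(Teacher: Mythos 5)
Your Stage~1 is the load-bearing step of the whole argument, and it contains a genuine gap precisely at the point you defer: producing arbitrarily large $s$ with $D(s)<\delta_2$. The exponential estimate $\n{x_1-x_2}_{\WW}\le k_2e^{-\beta s}D(s)$ is correctly derived \emph{conditionally} on $D(s)<\delta_2$, but none of the tools you invoke rules out the alternative that the two backward fibre orbits separate and stay at distance $\ge\delta_2$ forever. Minimality of $(\W,\sigma,\R)$ and density of $\{\sigma(-s,\w)\}$ concern only the base component and say nothing about the distance between $\xi_1(s)$ and $\xi_2(s)$; compactness of $\mK$ only yields convergent subsequences of each orbit separately; and continuity of $D$ with $D(0)<\delta_2$ gives $D(s)<\delta_2$ only on a (possibly short) initial interval, where $k_2e^{-\beta s}\ge k_2^{-1}\cdot k_2\ge 1$ need not improve on the trivial bound. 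Indeed, passing to limit points of $(\sigma(-s_n,\w),\xi_1(s_n),\xi_2(s_n))$ in the fibre product one only obtains pairs of full orbits in $\mK$ staying uniformly $\ge\delta_2$ apart, which contradicts nothing you have established. So the uniform $\delta_2$-discreteness of fibres — from which everything else in your proposal flows (constant cardinality, local sections, forward injectivity, uniqueness of backward orbits, the flow extension, and the identification $\Pi(t_n,\w,x)\in\mM_j$ in Stage~3) — is not proved. A secondary soft spot: your lower-semicontinuity argument for $\w\mapsto|\mK_\w|$ transports points only along the orbit of $\wit\w$, not into arbitrary nearby fibres, so as written it is circular; this piece is repairable (upper semicontinuity plus monotonicity of $|\mK_\w|$ along forward orbits plus minimality suffices), but only after Stage~1 is secured.

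For comparison, the paper does not attempt any of this from scratch: after observing that $\lambda_\mK<0$ gives exponential, hence uniform asymptotic, stability via Theorem~\ref{5.teo}, it invokes the abstract skew-product results of Novo--Obaya--Sanz \cite{noos5} (Theorems 3.3--3.5, themselves resting on Sacker--Sell \cite{SackerSell}) to obtain the $k$-cover property, the flow extension, and the Hausdorff-continuity of $\w\mapsto\mK_\w$ used in (i); part (ii) is then handled by showing $\mK\setminus\mM$ is again a compact invariant set of the same type and inducting on $k$. What you are trying to reprove in Stage~1 is exactly the nontrivial content of those cited theorems, and the argument you sketch for its key step does not close.
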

\begin{proof}
Theorem \ref{5.teo} ensures that $\mK$ is exponentially stable, so that it is
uniformly asymptotically stable. Theorem 3.5 of Novo {\em et al.} \cite{noos5},
which is based on previous results of Sacker and Sell \cite{SackerSell},
proves that $\mK$ is a $k$-cover of the base for a $k\in\N$. The fact that
$(\mK,\Pi,\R^+)$ admits a flow extension follows for instance from Theorem 3.4 of
\cite{noos5}.
\smallskip\par
(i) This assertion can be easily proved by combining
two facts: first, the closed character of $\mK$ ensures the continuity of the map
$\w\mapsto\mK_\w$ in the Hausdorff topology of the set of compact subsets
of $\WW$ (see Theorem 3.3 of \cite{noos5});
and second, $\mK_\w$ always contains $k$ elements.
\smallskip\par
(ii) Let $\mM\subseteq\mK$ be a minimal set. It is obvious that $\lambda_\mM<0$, and hence,
as we have already proved, $\mM$ is an exponentially stable $m$-cover of the
base with $m\le k$. It is easy to deduce from
the existence of flow extensions on $\mK$ and $\mM$ that $\mK-\mM$ is also positively
$\Pi$-invariant. Let us now take a sequence $(\w_k)$ with limit $\w$. Theorem
 of \cite{noos5} ensures that $(\mK_{\w_k})$ and $(\mM_{\w_k})$
 respectively converge to $\mK_\w$ and $\mM_\w$
in the Hausdorff topology of the set of compact subsets of $\WW$, and
it is not hard to deduce from here that $((\mK-\mM)_{\w_k})$ converges to
$(\mK-\mM)_\w$, and hence that $\mK-\mM$ is a positively $\Pi$-invariant
compact set. Obviously, $\lambda_{\mK-\mM}<0$. Altogether,
we see that the set $\mK-\mM$ satisfies the same conditions as $\mK$, so that it is a
$(k-m)$-cover of the base. Repeating the process a finite number of times (at most
$k-1$) leads us to the desired conclusion.
\end{proof}
This section contains two more results, both of them referred to the case in which the base flow
$(\W,\sigma,\R)$ is minimal. The last one, Theorem~\ref{5.teo2}, extends the information
given by Theorem~\ref{5.teo}: it proves that, if each minimal subset of a
positively $\Pi$-invariant compact set $\mP$ has negative upper-Lyapunov index,
then $\mP$ contains a finite number $l$ of minimal sets, and its
connected components are the positively $\Pi$-invariant subsets determined by the
domains of attraction of its minimal subsets.
Recall that the {\em domain of attraction of a minimal set $\mM$ with $\lambda_\mM<0$} is
defined, in this skew-product setting, by
\[
\begin{split}
 \mD(\mM):=\Big\{(\w,x)\in\W\times\WW\,|&\;\text{there exists }
 (\w,\bar x)\in\mM\\
 &\text{ with }\lim_{t\to\infty}\n{u(t,\w,x)-\bar u(t,\w,\bar x)}_{\WW}=0\Big\}\,.
\end{split}
\]
Note that we are not assuming the existence of backward extensions for the
elements of $\mP$. The proof of Theorem~\ref{5.teo} relies on Proposition~\ref{5.prop}, which
shows several properties for $\mD(\mM)$ in the case that $\lambda_\mM<0$.
\begin{prop}\label{5.prop}
Suppose that the base flow $(\W,\sigma,\R)$ is minimal, that
conditions {\rm \hyperlink{3.H1}{H1}} and {\rm \hyperlink{3.H21}{H2}} hold,
and that $\mM\subset\mU\subseteq\W\times\WW$
is a minimal set with $\lambda_{\mM}<0$. Then,
\begin{itemize}
\item[\rm(i)] the domain of attraction of $\mM$, $\mD(\mM)$,
is an open and connected positively $\Pi$-invariant set.
\item[\rm(ii)]
For all $\beta\in(0,-\lambda_\mM)$ and all compact set $\mP\subset\mD(\mM)$
there exists a constant $k=k(\beta,\mP)$ such that, for every $(\w,x)\in\mP$, there exists
$(\w,\bar x)\in\mM$ with
\[
 \n{u(t,\w,x)-u(t,\w,\bar x)}_{\WW}\le k\,e^{-\beta\,t}
 \quad\text{for $t\ge 0$}\,.
\]
\end{itemize}
\end{prop}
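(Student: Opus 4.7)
My plan is to handle (ii) before the connectedness part of (i), because the uniform exponential bound in (ii) is precisely what makes the retraction onto $\mM$ used in the connectedness argument continuous. For (ii): fix $\beta\in(0,-\lambda_\mM)$ and let $k_2,\delta_2$ be the exponential-stability constants supplied by Theorem~\ref{5.teo} applied to $\mM$ at this rate. For each $(\w_0,x_0)\in\mP$, pick $T_{\w_0,x_0}\ge r$ large enough that $\|u(T_{\w_0,x_0},\w_0,x_0)-u(T_{\w_0,x_0},\w_0,\bar x_0)\|_\WW<\delta_2/2$ for the attractor $(\w_0,\bar x_0)$. Combining the continuity of $\Pi(T,\cdot)$ from Theorem~\ref{3.flujocontinuo}(vii) with the local continuous sections of $\mM$ from Corollary~\ref{5.coro}(i) gives an open neighbourhood $V_{\w_0,x_0}\subset\mD(\mM)$ whose $T_{\w_0,x_0}$-image sits in the $\delta_2$-tube of $\mM$. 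Extracting a finite subcover of the compact set $\mP$ and letting $T^{*}$ be the maximum of the corresponding times, Theorem~\ref{5.teo} delivers the exponential bound for $t\ge T^{*}$, while the interval $[0,T^{*}]$ is controlled uniformly by the a priori $\WW$-bound on $u(t,\w,x)$ available on bounded subsets of $\W\times\WW$.

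For the easy part of (i): positive invariance is immediate from the cocycle identity (if $\Pi(t,\w,x)$ and $\Pi(t,\w,\bar x)$ come together in $\WW$ then so do $\Pi(t+s,\w,x)$ and $\Pi(t+s,\w,\bar x)$ for any $s\ge 0$). For openness, fix $(\w_0,x_0)\in\mD(\mM)$ with attractor $(\w_0,\bar x_0)\in\mM$, take $T\ge r$ with $\|u(T,\w_0,x_0)-u(T,\w_0,\bar x_0)\|_\WW<\delta_2/3$, let $x_j^{*}$ be the continuous local section of $\mM$ with $x_j^{*}(\w_0\pu T)=u(T,\w_0,\bar x_0)$ supplied by Corollary~\ref{5.coro}(i), and use the continuity of $\Pi(T,\cdot)$ (Theorem~\ref{3.flujocontinuo}(vii)) together with the continuity of $x_j^{*}$ to produce a $\W\times\WW$-neighbourhood $V$ of $(\w_0,x_0)$ on which $\|u(T,\w,x)-x_j^{*}(\w\pu T)\|_\WW<\delta_2$; Theorem~\ref{5.teo} then places $\Pi(T,\w,x)$ in the basin of $\mM$, so $V\subset\mD(\mM)$.

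Connectedness is the principal obstacle, because the orbit $t\mapsto\Pi(t,\w_0,x_0)$ is in general only $C$-continuous (not $\WW$-continuous) on $[0,r)$ when $(\w_0,x_0)\notin\mC_0$. I would begin by observing that $\mM$ itself is connected: Corollary~\ref{5.coro} supplies a flow extension, so $\mM$ equals the $\R$-orbit closure of any of its points, i.e.\ the closure of a continuous image of $\R$. Let $\mC^{*}$ denote the component of $\mD(\mM)$ containing $\mM$. For any $(\w_0,x_0)\in\mD(\mM)$ the orbit arc $\{\Pi(t,\w_0,x_0):t\ge r\}$ is continuous in $\W\times\WW$ by Theorem~\ref{3.flujocontinuo}(vi), and since its omega-limit lies in $\mM\subset\mC^{*}$, the closure of this arc in $\mD(\mM)$ is a connected set meeting $\mC^{*}$, forcing $\Pi(r,\w_0,x_0)\in\mC^{*}$. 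To upgrade this to $(\w_0,x_0)\in\mC^{*}$, I would introduce the retraction $R\colon\mD(\mM)\to\mM$ sending every point to its unique attractor (uniqueness from the uniform fiberwise separation of the $k$-cover $\mM$, continuity from (ii) applied to compact $\{(\w_n,x_n)\}\cup\{(\w_0,x_0)\}$), and combine the identity $R\circ\Pi(r,\cdot)=R$ with the fact that the fibers $\mM_\w$ are finite discrete sets --- so $\mC^{*}\cap\mM$ is open and closed in the connected $\mM$ and hence equals $\mM$, while the fiberwise stable sets $R^{-1}((\w,\bar x_j))$ are open in $\{\w\}\times\WW\cap\mD(\mM)$ --- in a component argument ruling out any component disjoint from $\mC^{*}$.
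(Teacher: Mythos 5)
Your treatment of (ii) and of the invariance and openness claims in (i) coincides essentially step for step with the paper's own proof: the same use of Theorem~\ref{5.teo} to produce $k_2,\delta_2$, the same triangle inequality through a continuous local section of $\mM$ (Corollary~\ref{5.coro}(i)) evaluated at a large time, the same finite covering of the compact set $\mP$ to obtain a uniform entry time into the $\delta_2$-tube of $\mM$, and the same crude a priori bound on the initial time interval. These parts are correct.

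The divergence is in the connectedness of $\mD(\mM)$. The paper disposes of it in two lines: given a separation $\mD(\mM)\subseteq\mV_1\cup\mV_2$ with $\mM\subset\mV_1$, a point of $\mV_2$ is joined to $\mV_1$ by its positive semiorbit. You are right that this is delicate: $t\mapsto\Pi(t,\w,x)$ is only guaranteed to be $\WW$-continuous for $t\ge r$ (Theorem~\ref{3.flujocontinuo}(vi)), and for $x$ with discontinuous $\dot x$ the orbit segment over $[0,r)$ can be a uniformly discrete, hence disconnected, subset of $\W\times\WW$. Your argument up to ``$\Pi(r,\w_0,x_0)$ lies in the component $\mC^{*}$ containing $\mM$'' is sound. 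But the upgrade to $(\w_0,x_0)\in\mC^{*}$ is where your proof stops being a proof. The identity $R\circ\Pi(r,\cdot)=R$ is false (the correct relation is $R\circ\Pi(r,\cdot)=\Pi(r,\cdot)\circ R$), and, more importantly, the continuity of $R$ and the discreteness of the fibers of $\mM$ do not transport connectedness backward: knowing that $R(\w_0,x_0)\in\mM\subset\mC^{*}$ and that $\Pi(r,\w_0,x_0)\in\mC^{*}$ says nothing about which component contains $(\w_0,x_0)$ itself, since preimages of connected sets under continuous maps need not be connected, and the components of the open set $\mD(\mM)$ need not even be open ($\W$ is not assumed locally connected). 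What is missing --- from your sketch, and arguably glossed over in the paper's as well --- is an actual connected subset of $\mD(\mM)$ joining $(\w_0,x_0)$ to $\Pi(r,\w_0,x_0)$ (or directly to $\mM$); the ``component argument'' you defer to in your last sentence is precisely this missing step, not a routine verification. Note also that the obvious candidates (the convex segment from $x_0$ to its attractor $\bar x_0$ in the fiber, or a smoothing of $x_0$) fail in general, the first because convex combinations need not stay in $\mD(\mM)$, the second because $C^1$ is not dense in $\WW$ for the Lipschitz norm; so this step genuinely requires a new idea rather than a routine completion.
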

\begin{proof}
(i) It is obvious that the set $\mD(\mM)$ is positively $\Pi$-invariant.
We fix $\beta\in(0,-\lambda_\mM)$ and apply Theorem
\ref{5.teo} to find $k_2\ge 1$ and $\delta_2>0$ such that, if $(\w,\bar x)\in \mM$ and
$(\w,x)\in\W\times\WW$ satisfy $\n{x-\bar x}_{\WW}<\delta_2$, then
$\beta_{\w,x}=\infty$ and
$\n{u(t,\w, x)-u(t,\w,\bar x)}_{\WW}\le k_2\,
e^{-\beta t}\,\n{x-\bar x}_{\WW}$ for all $t\ge 0$.
Let us fix $(\wit\w,\wit x)\in\mD(\mM)$ and look for $(\wit\w,\bar x)\in\mM$ and
$t_0>0$ such that
$\n{u(t_0,\wit\w,\wit x)-u(t_0,\wit \w,\bar x)}_{\WW}\le\delta_2/3$.
We also look for $\delta_0>0$ such that, if $d_\W(\w,\wit \w)<\delta_0$ and
$\n{x-\wit x}_{\WW}<\delta_0$, then $\n{u(t_0,\w,x)-
u(t,\wit\w,\wit x)}_{\WW}\le \delta_2/3$.
And we finally look for $\delta^0\le\delta_0$ such that, if
$d_\W(\w,\wit\w)<\delta^0$, then $\w\in\mU_{\,\wit\w}$ and in addition,
if $\bar x=x_i(\wit\w)$, then $\n{x_i(\w)-x_i(\wit\w)}_{\WW}
<\delta_0$ (see Corollary~\ref{5.coro}).
Finally, we take $(\w,x)$ with $d_\W(\w,\wit\w)<\delta^0$ and
$\n{x-\wit x}_{\WW}<\delta_0$. Then we have
\[
\begin{split}
 &\n{u(t_0,\w,x)-u(t_0,\w,x_i(\w))}_{\WW}\le
  \n{u(t_0,\w,x)-u(t_0,\wit\w,\wit x)}_{\WW}\\
 &+\n{u(t_0,\wit\w,\wit x)-u(t_0,\wit\w,x_i(\wit\w))}_{\WW}\!+
 \n{u(t_0,\wit\w,x_i(\wit\w))-u(t_0,\w,x_i(\w))}_{\WW}\!\le\delta_2\,.
\end{split}
\]
Therefore,
\[
\begin{split}
 &\n{u(t,\w,x)-u(t,\w,x_i(\w))}_{\WW}\\
 &\qquad\qquad=\n{u(t-t_0,\w\pu t_0,u(t_0,\w,x))-
 u(t-t_0,\w\pu t_0,u(t_0,\w,x_i(\w)))}_{\WW}\\
 &\qquad\qquad\le \delta_2\,k_2\,e^{-\beta(t-t_0)}
\end{split}
\]
for all $t\ge t_0$.
This inequality ensures that $(\w,x)\in\mD(\mM)$, and hence that $\mD(\mM)$ is open
in $\W\times\WW$, as asserted.
\par
In order to prove that $\mD(\mM)$ is connected, write $\mD(\mM)\subseteq\mV_1\cup\mV_2$
for two disjoint open subsets $\mV_1$ and $\mV_2$ of $\W\times\WW$. Since $\mM$ is
connected (as
any minimal set),
then it is contained in one of these sets, say $\mM\subset\mV_1$. But any point
$(\w,x)\in\mD(\mM)\cap\mV_2$ is connected with $\mV_1$ by a positive semiorbit, which together
with the positively $\Pi$-invariance of $\mD(\mM)$ shows that $\mD(\mM)\cap\mV_2$ is empty.
The conclusion is that $\mD(\mM)$ is connected, which completes the proof of (i).
\smallskip\par
(ii) We fix again $\beta\in(0,-\lambda_\mM)$ and take constants $k_2\ge 1$ and $\delta_2>0$
with the same properties as in the proof of (ii). Let us define
\[
 \mD_0(\mM):=\Big\{(\w,x)\in\W\times\WW\,|\;\text{there exists }
 (\w,\bar x)\in\mM\text{ with }\n{x-\bar x}_{\WW}<\delta_2\Big\}.
\]
It follows easily from Corollary~\ref{5.coro} that
$\mD_0(\mM)$ is an open subset of $\mD(\mM)$.
Note also that there exists
$t_0>0$ such that $\Pi(t,\mD_0(\mM))\subseteq \mD_0(\mM)$ for all $t\ge t_0$,
as easily deduced from Theorem~\ref{5.teo} and the definition of $\delta_2$.
\par
Let us take a compact set $\mP\subset\mD(\mM)$. The next goal is
to check that there exists
$t_1=t_1(\mP)>0$ such that $\Pi(t_1,\mP)\subset\mD_0(\mM)$.
The definition of $\mD(\mM)$ ensures that, for any $(\wit\w,\wit x)\in\mP$, there exists
$t_{\wit\w,\wit x}>0$ such that
$(\wit\w\pu t_{\wit\w,\wit x},u(t_{\wit\w,\wit x},\wit\w,\wit x))\in\mD_0(\mM)$ and,
since $\mD_0(\mM)$ is open,
the same happens for all the points $(\w,x)$ in a
neighborhood $\mV_{\wit\w,\wit x}\subset\W\times\WW$ of $(\wit\w,\wit x)$.
Hence, $(\wt,u(t,\w,x))\in\mD_0(\mM)$ for all $t\ge t_0+t_{\wit\w,\wit x}$
and all $(\w,x)\in\mV_{\wit\w,\wit x}$. The compactness of $\mP$ proves the existence of~$t_1$.
\par
Therefore, if $(\w,x)\in\mP$, then there exists $(\w\pu t_1,\bar y)\in\mM$ such that
$\n{u(t_1,\w,x)-\bar y}_{\WW}\le$ $\delta_2$. Since $\mM$ admits a flow extension,
there exists $\bar x=u(-t_1,\w\pu t_1,\bar y)$.~So,
\[
\begin{split}
 &\n{u(t,\w,x)-u(t,\w,\bar x)}_{\WW}\\
 &\quad\;=
 \n{u(t-t_1,\w\pu t_1,u(t_1,\w,x))-u(t-t_1,\w\pu t_1,u(t_0,\w,\bar x))}_{\WW}
 \le k\,\delta_2\,e^{-\beta\,(t-t_1)}
\end{split}
\]
for $t\ge t_1$. The assertion in (ii) follows easily from the uniform continuity of
$\Pi\colon[0,t_0]\times\mM\to C$ and $\Pi\colon[0,t_0]\times\mP\to C$
(ensured by Theorem~\ref{3.flujocontinuo}(v))
and the boundedness of $F$ on the compact subsets of $\W\times\R^{2n}$.
\end{proof}
\begin{teor}\label{5.teo2}
Suppose that the base flow $(\W,\sigma,\R)$ is minimal and
that conditions {\rm \hyperlink{3.H1}{H1}} and {\rm \hyperlink{3.H21}{H2}} hold.
Let $\mP\subset\W\times\WW$ be a positively $\Pi$-invariant
compact set such that, for any minimal subset $\mM\subseteq\mP$, it is
$\lambda_\mM<0$. Then,
\begin{itemize}
\item[\rm(i)] The omega-limit set $\mO(\w,x)$ of any $(\w,x)\in\mP$ is a minimal subset of $\mP$.
\item[\rm(ii)] $\mP$ contains a finite number of minimal sets, $\mM_1,\ldots,\mM_l$.
\item[\rm(iii)] If $\mD(\mM_1),\ldots,\mD(\mM_l)$ are the corresponding domains of attraction,
then the sets $\mP\cap\mD(\mM_j)$ are compact and connected
positively $\Pi$-invariant sets for $j=1,\ldots,l$, they are pairwise disjoint,
and $\mP=\bigcup_{j=1}^{\,l}\big(\mP\cap\mD(\mM_j)\big)$.
\end{itemize}
In particular, if $\mP$ is connected, it contains just one minimal set.
\end{teor}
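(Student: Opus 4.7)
\medskip\noindent
\textbf{Proof proposal.}
My plan is to derive (i) first, since it is the engine behind everything else, and then read off (ii) and (iii) from it together with Corollary~\ref{5.coro} and Proposition~\ref{5.prop}.

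For (i), fix $(\w,x)\in\mP$. Since $\mP$ is compact and positively $\Pi$-invariant, the semiorbit of $(\w,x)$ is bounded, so by Theorem~\ref{3.teorexisteM}(ii) the omega-limit set $\mO(\w,x)\subset\mP$ is well defined, nonempty, positively $\Pi$-invariant and compact. By Zorn's lemma it contains a minimal subset $\mM$, which is then a minimal subset of $\mP$, hence $\lambda_\mM<0$ by hypothesis. Now the key step: I will show that $(\w,x)\in\mD(\mM)$. Since $\mM\subseteq\mO(\w,x)$, there is a sequence $t_n\to\infty$ with $\Pi(t_n,\w,x)\to (\w^*,x^*)\in\mM$; because $\mD(\mM)$ is an open neighborhood of $\mM$ by Proposition~\ref{5.prop}(i), we have $\Pi(t_n,\w,x)\in\mD(\mM)$ for all $n$ large. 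Using the flow extension on $\mM$ (Corollary~\ref{5.coro}), I pull the attracting fibre element at $\wt_n$ back by $-t_n$ to obtain $(\w,\bar x)\in\mM_\w$ such that $\n{u(t+t_n,\w,x)-u(t+t_n,\w,\bar x)}_{\WW}\to 0$ via the cocycle property, giving $(\w,x)\in\mD(\mM)$. Then $\mO(\w,x)=\mO(\w,\bar x)=\mM$ because $(\w,\bar x)\in\mM$ and $\mM$ is minimal. Thus $\mO(\w,x)$ is a minimal set.

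For (ii), observe first that distinct minimal subsets $\mM,\mM'\subseteq\mP$ have disjoint domains of attraction: if $(\w,x)\in\mD(\mM)\cap\mD(\mM')$, then the same argument as above yields $\mO(\w,x)=\mM=\mM'$. By (i), every $(\w,x)\in\mP$ lies in $\mD(\mO(\w,x))$ (after a small argument, since $(\w,x)$ attracts exponentially to its own omega-limit set by Proposition~\ref{5.prop}(ii)). Hence $\mP$ is covered by the disjoint family $\{\mP\cap\mD(\mM)\}$ indexed by minimal subsets of $\mP$. Each $\mD(\mM)$ is open in $\W\times\WW$, so each $\mP\cap\mD(\mM)$ is open in $\mP$; disjointness then makes each one also closed in $\mP$. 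Compactness of $\mP$ forces the family to be finite, yielding $\mM_1,\dots,\mM_l$.

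For (iii), the sets $\mP\cap\mD(\mM_j)$ are pairwise disjoint and positively $\Pi$-invariant (intersection of two positively $\Pi$-invariant sets, using Proposition~\ref{5.prop}(i)), and compact by the clopen argument of the previous paragraph; and $\mP=\bigcup_{j=1}^{l}(\mP\cap\mD(\mM_j))$ is immediate. The delicate point is connectedness of each $\mP\cap\mD(\mM_j)$. For this I plan to attach every point of $\mP\cap\mD(\mM_j)$ to $\mM_j$ by a connected set lying inside $\mP\cap\mD(\mM_j)$: the closure of the positive semiorbit of $(\w,x)$ is contained in $\mP\cap\mD(\mM_j)$ (both are positively $\Pi$-invariant and closed in $\mP$), it is a continuous image of $[0,\infty)$ hence connected, and by (i) its omega-limit set is $\mM_j$; so the closure of the semiorbit is a connected subset of $\mP\cap\mD(\mM_j)$ containing both $(\w,x)$ and the connected set $\mM_j$. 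Taking the union over all $(\w,x)\in\mP\cap\mD(\mM_j)$ of these connected sets, all of which meet $\mM_j$, gives connectedness of $\mP\cap\mD(\mM_j)$. The final assertion is then immediate: a connected $\mP$ cannot split as a disjoint union of nonempty clopen pieces, forcing $l=1$.

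The main obstacle is the step in (i) showing $(\w,x)\in\mD(\mM)$ once one knows $\mO(\w,x)$ meets $\mM$. One is tempted to invoke uniform attraction directly, but $\mD(\mM)$ is only defined in terms of eventual closeness along a specific fibre orbit, so one has to bridge the gap between \emph{subsequential} approach in $\W\times\WW$ to a point of $\mM$ and genuine asymptotic tracking of a fibre-compatible $\bar x\in\mM_\w$; this is where the flow extension on $\mM$ and the cocycle identity become essential, and where Proposition~\ref{5.prop} is used both for openness of $\mD(\mM)$ and for the exponential rate that propagates backwards through the cocycle.
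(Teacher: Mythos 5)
Your proposal is correct and follows essentially the same route as the paper: part (i) via subsequential approach to a minimal set $\mM\subseteq\mO(\w,x)$, openness of $\mD(\mM)$, the backward/flow extension on $\mM$ and the cocycle identity to produce the tracking point $(\w,\bar x)\in\mM_\w$; part (ii) via disjointness of the domains of attraction plus compactness; and part (iii) via the clopen decomposition, with your semiorbit-closure argument for connectedness being the same idea the paper uses (connecting any point to the component containing $\mM_j$ by a positive semiorbit). The only cosmetic slip is the parenthetical appeal to Proposition~\ref{5.prop}(ii) in part (ii), which is unnecessary (and would be circular as a way of showing $(\w,x)\in\mD(\mO(\w,x))$); the membership is already supplied by your argument in (i).
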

\begin{proof}
(i) Let us take a point $(\w,x)\in\mP$, a minimal set $\mM\subseteq\mO(\w,x)\subseteq\mP$,
and a point $(\w,y)\in\mM$. By hypothesis, $\lambda_\mM<0$.
We take a sequence $(t_k)\uparrow\infty$ such that
$(\w,y)=\lim_{k\to\infty}(\w\pu t_k,u(t_k,\w,x))$. This fact, the
open character of $\mD(\mM)$ established in Proposition~\ref{5.prop}(i),
and the existence of backward orbits in $\mM$, ensure the existence of a point
$(\w,\bar x)\in\mM$ such that $\lim_{t\to\infty}\n{u(t,\w,\bar x)-u(t,\w,x)}_{\WW}=0$.
This means that $\mO(\w,x)=\mO(\w,\bar x)=\mM$, which proves (i).
%
\smallskip\par
(ii) Suppose that $\mP$ contains two different minimal subsets $\mM_1$ and $\mM_2$.
It is obvious that $\mD(\mM_1)$ and $\mD(\mM_2)$ are disjoint. On the other hand,
it follows from (i) that $\mP$ is contained in the union of the domains
of attraction of all its minimal subsets, each one of which is an open set
with nonempty intersection with $\mP$. Hence, (ii) follows from
compactness of $\mP$.
\smallskip\par
(iii) It follows from (i) and (ii)
that $\mP=\bigcup_{j=1}^{\,l}\big(\mP\cap\mD(\mM_j)\big)$. Our first goal is to prove
that the positively $\Pi$-invariant set $\mP\cap\mD(\mM_j)$ is closed (and hence
compact) for $j=1,\ldots,l$. Let us fix $j\in\{1,\ldots,l\}$
and take a sequence $((\w_m,x_m))\in\mP\cap\mD(\mM_j)$
with limit $(\wit\w,\wit x)\in\mP$. We look for $k\in\{1,\ldots,l\}$ such that
$(\wit\w,\wit x)\in\mP\cap\mD(\mM_k)$. Since this set is open in $\mP$, there
exists $m_0$ such that $(\w_{m_0},x_{m_0})\in\mP\cap\mD(\mM_k)$, and since
$\mD(\mM_j)\cap\mD(\mM_k)$ is empty if $k\ne j$, then $k=j$. That is,
$\mP\cap\mD(\mM_j)$ is closed, as asserted.
\par
In order to prove that each set
$\mP\cap\mD(\mM_j)$ is connected, we assume by contradiction that for an index
$j\in\{1,\ldots,l\}$ we can write $\mP\cap\mD(\mM_j)\subset\mV_1\cup\mV_2$
for two disjoint open subsets $\mV_1$ and $\mV_2$ of $\W\times\WW$.
Since $\mM_j$ is a connected subset of $\mP\cap\mD(\mM_j)$, we can assume without
restriction that $\mM_j\subset\mV_1$. And since $\mP\cap\mD(\mM_j)$ is a
positively $\Pi$-invariant subset of $\mD(\mM_j)$, we conclude that
$\mP\cap\mD(\mM_j)\cap\mV_2$ is empty. This completes the proof of (iii),
\smallskip\par
The last assertion of the theorem follows trivially from (iii) together with the
open character of $\mD(\mM_j)$ ensured by Proposition~\ref{5.prop}(i).
\end{proof}
\begin{nota}
It is important to emphasize the fact that, if the flow $(\W,\sigma,\R)$
is almost periodic and $\mM$ is a minimal $m$-cover of the base admitting a flow extension,
then the flow $(\mM,\Pi,\R)$ is also almost periodic: see \cite{SackerSell}, Theorem 6
of Chapter 3. Therefore, Corollary \ref{5.coro} ensures the following property.
Assume that our family~\eqref{3.eq} is constructed by the
usual hull procedure (summarized in the
Introduction) from a single FDE $\dot{y}(t)=f(t, y(t), y(t-\wit\tau(t,y_{t})))$
given by a uniformly almost periodic pair $(f,\wit\tau)$.
Then the existence of a minimal set
$\mM\subset\W\times\WW$
with $\lambda_{\mM}<0$ ensures the existence of exponentially stable
almost-periodic solutions of the initial system. Note also that the existence of
such a set $\mM$ is ensured by the existence of a bounded and uniformly exponentially
stable solution on $[0,\infty)$ of the initial system:
it is easy to check that the omega-limit set of
such a solution for the flow $\Pi$ associated to the
family of FDE defined on the corresponding hull $\W$
is a minimal subset of $\W\times\WW$
(just repeat the proof of Theorem~\ref{5.teo2}(i)), which in addition
is exponentially stable.
\end{nota}


\begin{thebibliography}{999}
\bibitem{arsa}{\rm O.~Arino, E.~S\'{a}nchez, A.~Fathallah},
        State-dependent delay differential equations in populations dynamics:
        modeling and analysis,
        {\em Fields Inst. Commun.} {\bf 29} (2001), 19--36.
\bibitem{bawa} {\rm M.V.~Barbarossa, H.O.~Walther},
        Linearized stability for a new class of neutral
        equations with state-dependent delay,
        {\em Differ. Equ. Dyn. Syst.} {\bf 24} (1) (2016), 63--79.
\bibitem{chhw} {\rm Y.~Chen, Q.~Hu, J.~Wu},
        Second-order differentiability with respect to parameters for differential
        equations with adaptative delays,
        {\em Front. Math. China} {\bf 5} (2) (2010), 221--286.
\bibitem{chle1} {\rm S.-N.~Chow, H.~Leiva},
        Dynamical spectrum for time dependent linear systems in Banach spaces,
        {\em Japan J. Indust. Appl. Math.} {\bf 11} (1994), 379--415.
\bibitem{chle2} {\rm S.-N.~Chow, H.~Leiva},
        Existence and roughness of the exponential dichotomy for skew-product semiflow
        in Banach spaces,
        {\em J. Differential Equations} {\bf 120} (1995), 429--477.
\bibitem{havl} {\rm J.K.~Hale, S.M.~Verdyun Lunel},
        Introduction to Functional Differential Equations,
        {\em Appl. Math. Sciences\/} {\bf 99}, Springer-Verlag, New York, 1993.
\bibitem{hart1} {\rm F.~Hartung},
        On differentiability of solutions with respect to parameters in a class of
        functional differential equations,
        {\em Funct. Differ. Equ.} {\bf 4} (1)-(2) (1997), 65--79.
\bibitem{hart2} {\rm F.~Hartung},
        Linearized stability in periodic functional differential equations
        with state-dependent delays,
        {\em J. Comput. Appl. Math.} {\bf 174} (2005), 201--211.
\bibitem{hart3} {\rm F.~Hartung},
        Differentiability of solutions with respect to the initial data in
        differential equations with state-dependent delays,
        {\em J. Dynam. Differential Equations} {\bf 23} (2011), 843--884.
\bibitem{hart4} {\rm F.~Hartung}
        On second-order differentiability with respect to parameters for
        differential equations with state-dependent delays,
        {\em J. Dynam. Differential Equations} {\bf 25} (2013), 1089--1138.
\bibitem{hkvv} {\rm F.~Hartung, T.~Krisztin, H.O.~Walther, J.~Wu},
        Functional Differential Equations with State-Dependent Delays: Theory and Applications,
        {\em Handbook of Differential Equations: Ordinary Differential Equations vol. 3},
        Elsevier, North-Holland, 2006, 435--545.
\bibitem{hell} {\rm X.~He, R.~de la Llave},
        Construction of quasi-periodic solutions of state-dependent delay differential equations
        by the parameterization method I: finitely differentiable, hyperbolic case,
        {\em J. Dynam. Differential Equations}, online since February 2016.
\bibitem{hell2} {\rm X.~He, R.~de la Llave},
        Construction of quasi-periodic solutions of state-dependent delay
        differential equations by the parameterization method II: analytic case
        {\em J. Differential Equations} {\bf 261} (3) (2016), 2068–-2108.
\bibitem{himn} {\rm Y. Hino, S. Murakami, T. Naito},
        Functional Differential Equations with Infinite Delay,
        {\em Lecture Notes in Math.} {\bf 1473},
        Springer-Verlag, Berlin, Heidelberg, New York, 1991.
\bibitem{huvz} {\rm Q.~Hu, J.~Wu, X.~Zou},
        Estimates of periods and global continua of periodic solutions
        for state-dependent delay equations,
        {\em SIAM J. Math. Anal.} {\bf 44} (4) (2012), 2401--2427.
\bibitem{huwu} {\rm Q.~Hu, J.~Wu},
        Global Hopf bifurcation for differential equations with state-dependent delay,
        {\em J. Differential Equations} {\bf 248} (2010), 2801--2840.
\bibitem{inst} {\rm T.~Insperger, G.~St\'{e}p\'{a}n},
        Semi-discretation for Time-Delay Systems. Stability and Engeneering Applications,
        {\em Appl. Math. Sci.} {\bf 178}, Springter, New York, 2011.
\bibitem{krre} {\rm T.~Krisztin, A. Rezounenko},
        Parabolic partial differential equations with discrete state-dependent delay:
        Classical solutions and solution manifold
        {\em J. Differential Equations\/} {\bf 260} (2016), 4454–-4472.
\bibitem{mpna} {\rm J.~Mallet-Paret, R.D.~Nussbaum},
        Stability of periodic solutions of state-dependent delay-differential equations,
        {\em J. Differential Equations\/} {\bf 250} (2011), 4085-–4103.
\bibitem{mano1} {\rm I.~Maroto, C.~N\'{u}\~{n}ez, R.~Obaya},
        Dynamical properties of nonautonomous functional differential
        equations with state-dependent delay,
        {\em Discrete Cont. Dynam. Syst.} {\bf 37} (3) (2017), 3939--3961.
\bibitem{noos4} {\rm S.~Novo, R.~Obaya, A.M.~Sanz},
        Exponential stability in non-autonomous delayed equations with
        applications to neural networks,
        {\em Discrete Contin. Dyn. Syst.} {\bf 18} (2007), 517--536.
\bibitem{noos5} {\rm S.~Novo, R.~Obaya, A.M.~Sanz},
        Stability and extensibility results for abstract skew-product semiflows,
        {\em J. Differential Equations} {\bf 235} (2007), 623--646.
\bibitem{SackerSell} {\rm R.J.~Sacker, G.R.~Sell},
        Lifting Properties in Skew-Products Flows with Applications to Differential Equations,
        {\em Mem. Amer. Math. Soc.} {\bf 190}, Amer. Math. Soc., Providence, 1977.
\bibitem{SackerSellDich} {\rm R.J.~Sacker, G.R.~Sell},
        Dichotomies for linear evolutionary equations in Banach spaces,
        {\em J. Differential Equations} {\bf 113} (1994), 17--67.
\bibitem{shyi} {\rm W.~Shen, Y.~Yi},
        Almost Automorphic and Almost Periodic Dynamics in Skew-Product Semiflows,
        {\em Mem. Amer. Math. Soc.} {\bf 647}, Amer. Math. Soc., Providence, 1998.
\bibitem{smit} {\rm H.~Smith},
        An Introduction to Delay Differential Equations with Applications to the Life Science,
        {\em Appl. Math.} {\bf 57}, Springer, New York, 2001.
\bibitem{walt2} {\rm H.O.~Walther},
        The solution manifold and $C^1$-smoothness for differential
        equations with state-dependent delay
        {\em J. Differential Equations} {\bf 195} (2003), 46--65.
\bibitem{walt} {\rm H.O.~Walther},
        Smoothness of semiflows for differential equations with state-dependent delays,
        {\em J. Math. Sci.} {\bf 124} (4) (2004).
\bibitem{wu} {\rm J.~Wu},
        Introduction to Neural Dynamics and Signal Transmission Delay,
        {\em Nonlinear Analysis and Aplications} {\bf 6}, Walter de Gruyter, Berlin, New York, 2001.
\end{thebibliography}
\end{document}